\newcommand{\R}{\mathbb{R}} 
\newcommand{\Q}{\mathbb{Q}} 
\newcommand{\C}{\mathbb{C}} 
\newcommand{\N}{\mathbb{N}} 
\newcommand{\D}{\mathbb{D}} 
\newcommand{\T}{\mathbb{T}} 
\newcommand{\BB}{\mathcal{B}} 
\newcommand{\sbs}{\subset} 
\newcommand{\ind}{\mathbbm{1}} 
\newcommand{\eps}{\varepsilon}
\newcommand{\intT}[1]{\int_{\T} #1 \,dm(\zeta)} 
\newcommand{\no}[1]{\Vert #1 \Vert} 
\newcommand{\Ra}{\Rightarrow}
\newcommand{\ra}{\rightarrow} 
\newcommand{\fa}{\forall}
\renewcommand{\it}[1]{\textit{#1}}
\renewcommand{\bf}[1]{\textbf{#1}}
\renewcommand\subsection{\@startsection{subsection}{2}%
\z@{-.5\linespacing\@plus-.7\linespacing}{.5\linespacing}%
{\normalfont\scshape}}
\renewcommand\subsubsection{\@startsection{subsubsection}{3}%
\z@{.5\linespacing\@plus.7\linespacing}{-.5em}%
{\normalfont\scshape}}
\newtheorem{theorem}{Theorem}[section]
\newtheorem{lemma}[theorem]{Lemma}
\newtheorem{remark}[theorem]{Remark}
\newtheorem{corollary}[theorem]{Corollary}
\newtheorem{proposition}[theorem]{Proposition}
\theoremstyle{definition}
\title{Shift Invariant Subspaces Of Large Index In The Bloch Space}
\author{Nikiforos Biehler}
\begin{document}

%
%

\begin{abstract}
We consider the shift operator $M_z$, defined on the Bloch space and the little Bloch space and we study the corresponding lattice of invariant subspaces. The index of a closed invariant subspace $E$ is defined as $\text{ind}(E) = \dim(E/M_zE)$.  We construct closed, shift invariant subspaces in the Bloch space that can have index as large as the cardinality of the unit interval $[0,1]$. Next we focus on the little Bloch space, providing a construction of closed, shift invariant subspaces that have arbitrary large index. Finally we establish several results on the index for the weak-star topology of a Banach space and prove a stability theorem for the index when passing from (norm closed) invariant subspaces of a Banach space to their weak-star closure in its second dual. This is then applied to prove the existence of weak-star closed invariant subspaces of arbitrary index in the Bloch space.

\end{abstract}

\maketitle

\section{Introduction \& main results}

Consider the unit disc $\D = \{ z \in \C \, : \, |z| < 1 \} $ of the complex plane. The \textit{Bloch space} $\BB$ is defined as the set of the functions $f$, analytic in $\D$, that satisfy $\sup_{|z|<1}(1-|z|^2)|f'(z)| < +\infty$. The quantity 

\begin{equation*}
\no{f}_{\BB} = |f(0)| + \sup_{|z|<1}(1-|z|^2)|f'(z)|
\end{equation*}

\noindent is a norm on the Bloch space, which makes it into a Banach space. The closure of analytic polynomials with respect to that norm is a subspace of the Bloch space, called the \textit{little Bloch space}, and is denoted by $\BB_0$. An equivalent way of defining the little Bloch space, is as the subspace of functions in the Bloch space that satisfy $\lim_{|z|\ra 1^-}(1-|z|^2)|f'(z)| = 0$. Functions in the Bloch space enjoy several nice properties. For a function $f$ in the Bloch space, the quantity $\sup_{|z|<1}(1-|z|^2)|f'(z)|$ is Möbius invariant, meaning it remains unchanged after composing $f$ on the right by any automorphism of the unit disc. Another well known fact is that an analytic function belongs to the Bloch space if and only if it is Lipschitz with respect to the hyperbolic metric on the unit disc. The Bloch space has been thoroughly studied (in \cite{Pommerenke1974},\cite{Pommerenke1992} for example) as it is linked with many topics in analytic function theory. 

\vspace{3mm}

\noindent We consider the operator of multiplication $M_z : \BB \ra \BB$, $M_zf(z) = zf(z)$, also called the Shift operator, and we are interested in studying the lattice of closed invariant subspaces of $M_z$. Let $E$ be a closed shift invariant subspace.

\par\noindent\rule{2.5cm}{0.4pt}

\noindent \textit{2020 Mathematics Subject Classification:} 30H30, 30B10, 47A15, 47B91. 
\\
\noindent \textit{Keywords and phrases:} Bloch space, little Bloch space, invariant subspaces, shift operator, index, lacunary Taylor series.

\newpage

\noindent It follows from general properties of the shift operator that $zE$ is closed (see Lemma 1.2), allowing us to define the \textit{index} of $E$ to be the quantity

\begin{equation*}
\text{ind}(E) := \dim(E/zE).
\end{equation*}

\noindent Our goal is to show that, for the spaces under consideration, there exist invariant subspaces for which the index can be as large as possible, that is to say, as large as the space permits it to be. 

\vspace{3mm}

\noindent Our motivation comes from a series of papers, starting of course from the celebrated Beurling Theorem, which characterizes the invariant subspaces of the shift operator in the classical Hardy space $H^2$. A fact following from that characterisation is that every invariant subspace has the so called \textit{codimension one} (also called \textit{index one}) property, i.e. every non-trivial invariant subspace has index equal to one. A classic reference for properties of the index, in this context, and the codimension one property is Richter's article \cite{Richter1987}.

\vspace{3mm}

\noindent In a 1985 paper C. Apostol, H. Bercovici, C. Foias and C. Pearcy (\cite{Apostol1985}) proved that what was previously true for the Hardy space, is no longer true for the classical Bergman space $A^2$ of functions analytic in the unit disc and square integrable with respect to planar Lebesgue measure. In particular they proved the existence of invariant subspaces $E_k$ such that $\text{ind}(E_k) = k$, for $k =1,2, \ldots, +\infty$. Numerous results have been published since then, in hope to understand the lattice of invariant subspaces of the shift operator. In one direction, we have several results proving the codimension one property, such as in the classical Dirichlet space (\cite{Richter1988}) of analytic functions in the unit disc whose derivative belongs to $A^2$, or in the space $\ell^1_{A}$ of Taylor series in the unit disc with summable coefficients (\cite{Richter1987}). In the other direction we have plenty of constructions, utilising different properties of each space, to prove the existence of invariant subspaces of arbitrary index. The first concrete example of an invariant subspace of index 2 in $A^2$ has been given by H. Hedenmalm in \cite{Hedenmalm1993}, using results on sampling and interpolation in the Bergman space. Later lacunary series were used by A. Borichev for a wide range of spaces including the classical Bergman spaces, a variety of mixed norm spaces, growth spaces and some weighted sequence spaces (\cite{Borichev1998}). In \cite{Abakumov2002} E. Abakumov and A. Borichev proved the existence of invariant subspaces of arbitrary index for a variety of weighted sequence spaces using solution sets of convolution equations. In particular they show that the space $\ell^p_{A}$, of Taylor series in the unit disc, with $p$-summable Taylor coefficients, contains invariant subspaces of arbitrary index as long as $p>2$. The case $1<p<2$ is still an open problem. Special families of inner functions  have been used for $H^{\infty}$ (\cite{Borichev1998},\cite{Niwa2003}). 

\vspace{3mm}

In the recent years the Bloch space has attracted a lot of attention. In a recent paper (\cite{Limani2023}), A. Limani and A. Nicolau answer several open questions in the Bloch space, related to invariant subspaces and cyclicity. In particular, they prove a Beurling-type theorem for singly generated, weak-star invariant subspaces in the Bloch space. This gives more motivation to study the index of the shift invariant subspaces in the Bloch space.

\vspace{3mm}

In this paper, we exploit ideas developed by A. Borichev in \cite{Borichev1998} and construct lacunary series, with almost maximal growth, in order to prove the existence of closed, shift invariant subspaces of arbitrary large index in each of the spaces $\BB$ and $\BB_0$ as well as weak-star closed invariant subspaces in $\BB$, which under this topology is not a Banach space. The maximal growth for Bloch functions can be obtained by integrating the derivative of a function and using the definition of the norm. 

\newpage

\noindent In particular for every $f \in \BB$ we have:

\begin{equation}
|f(z)| \leq \frac{1}{2} \no{f}_{\BB} \log \frac{1 + |z|}{1- |z|} \lesssim  \no{f}_{\BB} \log \frac{1}{1- |z|}.
\end{equation}

\noindent A more accurate result concerning the growth of Bloch functions comes from Makarov's Law of Iterated Logarithm (\cite{Makarov1985}). Inequality (1) also means that integrating a Bloch function, results in a function in the little Bloch space, which will be used with little mention throughout the text. 

\vspace{3mm}

\noindent By \textit{lacunary series} we will always mean some function of the form:

\begin{gather*}
f(z) = \sum_{n=0}^{\infty} a_n z^{s_n} \,\,, \,z \in \D \, , 
\end{gather*}

\noindent where $\{a_n\}_{n=0}^{\infty} \sbs \C$ and $\{s_n\}_{n=0}^{\infty} \sbs \N$, and such that the exponents $s_n$ grow sufficiently fast. The rate at which the sequence $s_n$ should grow is described by the Ostrowski-Hadamard gap theorem (see also Fabry's gap theorem), which asserts that if $\frac{s_{n+1}}{s_n} \geq q > 1$ for some $q>1$ and all $n \in \N$, and if the Taylor coefficients are such that the radius of convergence of the above series is equal to 1, then that function has the unit circle as a natural boundary. Such sequences $\{s_n\} \sbs \N$ will simply be called lacunary sequences. Even though series of this type are notoriously badly behaved, they can be useful to construct functions with one's desired properties. Lacunary functions are neatly characterised in the Bloch space:

\begin{proposition}
Let $f(z) = \sum_{n=0}^{\infty} a_n z^{s_n}$ be a lacunary series. Then $f$ belongs to the Bloch space (little Bloch space resp.) if and only if $ (a_n) $ is bounded ( $a_n \ra 0$ resp.)
\end{proposition}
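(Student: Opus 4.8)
The plan is to work directly from the defining quantity $\sup_{|z|<1}(1-|z|^2)|f'(z)|$ applied to a lacunary series, reducing everything to estimates on the radial function $r \mapsto (1-r^2)|f'(r)|$ (by rotation invariance of the modulus of each term and the triangle inequality, the sup over the disc is essentially controlled by the sup over the radius, up to the lacunary structure). First I would treat the easy direction: if $f$ is in the Bloch space, then by (1) we have $|f(z)| \lesssim \no{f}_{\BB}\log\frac{1}{1-|z|}$, and extracting the coefficient $a_n$ via the Cauchy integral $a_n = \frac{1}{2\pi i}\int_{|z|=r} f(z) z^{-s_n-1}\,dz$ gives $|a_n| \le r^{-s_n}\sup_{|z|=r}|f(z)| \lesssim r^{-s_n}\log\frac{1}{1-r}$; optimizing by choosing $r = 1 - 1/s_n$ yields $|a_n| \lesssim \log s_n$, which is not yet boundedness. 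So instead I would use the sharper route: differentiate termwise, $f'(z) = \sum_n a_n s_n z^{s_n-1}$, and note that for lacunary exponents the $n$-th term dominates a whole dyadic-type block near $r = 1 - 1/s_n$; evaluating $(1-r^2)|f'(r)|$ there and using the gap condition $s_{n+1}/s_n \ge q > 1$ to sum the geometric tails gives $|a_n| s_n (1 - r) \asymp |a_n|$ bounded. For the little Bloch direction, the same localization shows $(1-r^2)|f'(r)| \to 0$ along $r = 1 - 1/s_n$ forces $a_n \to 0$.

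For the converse, suppose $(a_n)$ is bounded, say $|a_n| \le M$. I would estimate
\begin{equation*}
(1-|z|^2)|f'(z)| \le (1-r^2)\sum_{n=0}^{\infty} |a_n| s_n r^{s_n-1} \le 2M (1-r) \sum_{n=0}^{\infty} s_n r^{s_n-1},
\end{equation*}
and the key is that for a lacunary sequence the function $g(r) = (1-r)\sum_n s_n r^{s_n-1}$ is bounded on $[0,1)$. This follows by splitting the sum at the index $n_0 = n_0(r)$ where $s_{n_0} \approx \frac{1}{1-r}$: for $s_n \le \frac{1}{1-r}$ one bounds $s_n r^{s_n-1} \le s_n$ and, because the $s_n$ grow at least geometrically, $\sum_{s_n \le 1/(1-r)} s_n \lesssim \frac{1}{1-r}$; for $s_n > \frac{1}{1-r}$ one uses $r^{s_n - 1} \le e^{-(s_n-1)(1-r)}$ together with $s_n(1-r) \ge q^{n - n_0}$ (again by lacunarity) to get a convergent geometric-type series in $n - n_0$, each term times $(1-r)$ being $O\big((1-r) \cdot \frac{1}{1-r} q^{-(n-n_0)}\big)$. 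Summing both pieces gives $g(r) = O(1)$ uniformly, hence $f \in \BB$. For the little Bloch converse, assume $a_n \to 0$: given $\eps > 0$ pick $N$ with $|a_n| < \eps$ for $n \ge N$; the finitely many terms $n < N$ contribute a polynomial, which lies in $\BB_0$, and the tail contributes at most $\eps \cdot g(r) = O(\eps)$ by the bound just established, so $(1-|z|^2)|f'(z)| \to 0$.

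The main obstacle is getting the uniform bound on $g(r) = (1-r)\sum_n s_n r^{s_n - 1}$ cleanly — i.e., showing that the geometric gap condition exactly compensates the worst-case behavior of each monomial $s_n r^{s_n-1}$ near its own peak. The right bookkeeping is to compare $s_n$ with the threshold $1/(1-r)$ and exploit that lacunarity makes both the "small" and "large" ranges of $n$ sum geometrically relative to the critical index; I would be careful that the implied constants depend only on $q$ (and not on $r$), which is what makes the estimate uniform. Everything else — the Cauchy estimate for coefficients, the reduction to radial behavior, and the approximation-by-polynomials argument in the little Bloch case — is routine once this lemma-type bound is in hand.
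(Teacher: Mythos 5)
The paper itself does not prove Proposition 1.1 (it cites Theorem 1.14 of \cite{Hedenmalm2000}), so your proposal is measured against the standard argument. Your converse half is correct and is essentially that standard argument: the whole content is the uniform bound $(1-r)\sum_n s_n r^{s_n-1}\le C(q)$, and your splitting at the critical index $s_{n_0}\approx(1-r)^{-1}$ (geometric summation of the $s_n$ below the threshold, super-geometric decay via $s_n(1-r)\ge q^{\,n-n_0}$ above it) does yield constants depending only on $q$; the head-plus-tail decomposition for the little Bloch case is likewise fine.

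The forward half, as sketched, has a genuine gap. Having correctly noted that the Cauchy estimate applied to $f$ only gives $|a_n|\lesssim\log s_n$, you switch to a localization argument: evaluate $(1-r^2)|f'(r)|$ at $r=1-1/s_n$ and claim the $n$-th term dominates after ``summing the geometric tails''. But that tail, $\sum_{k\neq n}|a_k|s_k r^{s_k-1}(1-r)$, involves precisely the coefficients $|a_k|$ you are trying to bound, so the argument is circular as stated; and the a priori bound $|a_k|\lesssim\log s_k$ is not enough to close it (the single neighbouring term $k=n+1$ already contributes on the order of $qe^{-q}\log s_n$, which is unbounded in $n$, and the terms $k<n$ contribute on the order of $\tfrac{1}{q-1}\sup_k|a_k|$, so no absorption works uniformly in $q>1$). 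The fix is simpler than what you attempt and uses no lacunarity at all: apply the Cauchy coefficient estimate to $f'$ rather than to $f$. Since $a_n s_n$ is the coefficient of $z^{s_n-1}$ in $f'$,
\begin{equation*}
|a_n|\,s_n\le r^{-(s_n-1)}\sup_{|z|=r}|f'(z)|\le \frac{\no{f}_{\BB}}{r^{s_n-1}(1-r^2)},
\end{equation*}
and the choice $r=1-1/s_n$ gives $|a_n|\le C\no{f}_{\BB}$ with an absolute constant; in the little Bloch case the same choice gives $|a_n|\le C\,(1-r^2)\sup_{|z|=r}|f'(z)|\ra 0$. With this replacement your proof is complete, and it also makes transparent that lacunarity is needed only in the converse direction.
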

 
\noindent Proof of this can be found in \cite{Hedenmalm2000}, Theorem 1.14. A key tool in proving a given invariant subspace has the desired index is the following lemma on summation of indices.

\begin{lemma}
Let $X$ be a Banach space of analytic functions on the unit disc, satisfying the division property(\cite{Richter1987}), i.e. 

\begin{enumerate}[label=(\roman*)]

\item $X$ is a Banach space contained in $Hol(\D)$, the space of analytic functions on $\D$.

\item Evaluation functionals $k_{\lambda}$ are bounded on $X$, for all $\lambda \in \D$.

\item $zf$ belongs to $X$ whenever $f \in X$.

\item If $f \in X$ and $f(\lambda) = 0$, then there exists a function $g\in X$ such that $(z - \lambda)g(z) = f(z)$.

\end{enumerate}

Let $N \in \{1,2, \ldots, +\infty \}$, and for each $0 \leq n < N$ let $E_n \sbs X$ be an invariant subspaces of index one, and $f_n \in E_n$ with $f_n(0) \neq 0$. Suppose moreover that for each $0 \leq n < N$ there exists $c_n > 0$ such that

\begin{equation}
c_n|g(0)| \leq \no{g + h }_X \, ,  \quad g \in E_n \, , h \in \bigvee_{k \neq n}E_k.
\end{equation}

Then $\text{ind}(\bigvee_{0 \leq n < N} E_n) = N$.

\end{lemma}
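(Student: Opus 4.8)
The plan is to show the two inequalities $\operatorname{ind}(\bigvee_n E_n) \leq N$ and $\operatorname{ind}(\bigvee_n E_n) \geq N$ separately, the lower bound being the routine part and the upper bound being where condition (2) does the real work. Write $E = \bigvee_{0 \leq n < N} E_n$ and fix $z$-invariance of $E$ throughout; by Lemma 1.2 (the division property, item (iv), guarantees $zE$ is closed) the quotient $E/zE$ is a well-defined Banach space.

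For the \emph{lower bound}, I would argue that the images $\hat f_n$ of the $f_n$ in $E/zE$ are linearly independent, which forces $\dim(E/zE) \geq N$. Suppose a finite linear combination $\sum_{n \in F} \alpha_n f_n$ lies in $zE$, say equals $zg$ with $g \in E$. Since every element of $zE$ vanishes at $0$, evaluating at $0$ gives $\sum_{n \in F} \alpha_n f_n(0) = 0$; but that alone is not enough, so instead I would use condition (2) more carefully. Actually the clean route: if $\sum_{n \in F}\alpha_n f_n = zg$, then for a fixed $m \in F$ write $\alpha_m f_m = zg - \sum_{n \in F, n \neq m}\alpha_n f_n$. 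The right-hand side is an element of $E_m' := \bigvee_{k \neq m} E_k$ plus... no — rather, apply (2) with $n = m$: here $\alpha_m f_m \in E_m$ and we want to compare with something in $\bigvee_{k \neq m} E_k$. Since $zg \in E = E_m \vee \bigvee_{k\neq m}E_k$, this requires a bit of care; the cleanest formulation is to first establish, \emph{as a consequence of (2)}, the direct-sum-like estimate that for any $g_n \in E_n$ ($n$ in a finite set $F$) one has $c_n |g_n(0)| \le \Vert \sum_{j \in F} g_j \Vert_X$ for each $n \in F$ — this follows by grouping $\sum_{j\neq n} g_j \in \bigvee_{k\neq n}E_k$ and applying (2). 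Granting this, if $\sum_{n\in F}\alpha_n f_n \in zE$ then, approximating an arbitrary element of $zE$ by finite sums $z\sum g_j$ with $g_j \in E_j$ and using that $z\sum g_j$ vanishes at $0$, one gets $c_n|\alpha_n f_n(0)| \le \Vert \sum_{n\in F}\alpha_n f_n - z\textstyle\sum g_j\Vert_X$, which can be made arbitrarily small, so $\alpha_n f_n(0) = 0$, hence $\alpha_n = 0$ since $f_n(0)\neq 0$. (A small technical point: the inequality $c_n|g_n(0)|\le\Vert\sum g_j + z h\Vert$ for $h \in E$ needs the observation that $zh \in \bigvee_k E_k$ and can itself be absorbed after re-grouping, or one applies (2) in the form stated with $g = g_n$ and $h = \sum_{j \neq n} g_j + zh$, which indeed lies in $\bigvee_{k\neq n}E_k$ because $z(\bigvee_k E_k)\subseteq \bigvee_k E_k$.)

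For the \emph{upper bound} $\operatorname{ind}(E) \leq N$, I would show that $E$ is spanned, modulo $zE$, by $f_0, \dots, f_{N-1}$ (when $N < \infty$; for $N = \infty$ one shows every element of $E$ is approximable modulo finitely many $f_n$'s plus $zE$, giving $\dim \geq N$ for all finite $N$, hence $= \infty$). The idea: take $f \in E$. Since each $E_n$ has index one, $E_n = \overline{[f_n]}$, the smallest invariant subspace containing $f_n$ — this is where "index one" is used, together with the fact (from the division property, cf. Richter \cite{Richter1987}) that an index-one invariant subspace containing a function nonvanishing at $0$ is generated by that function, so $E_n/zE_n$ is one-dimensional spanned by $f_n$. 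Then $E = \bigvee_n \overline{[f_n]}$, so finite linear combinations $\sum_n p_n(z) f_n(z)$ with $p_n$ polynomials are dense in $E$. Writing $p_n(z) = p_n(0) + z q_n(z)$, such a combination equals $\sum_n p_n(0) f_n + z\sum_n q_n(z)f_n(z)$, and the second term lies in $zE$. Hence modulo $zE$ every element of a dense subset of $E$ is a (finite) linear combination of the $f_n$. To pass to the closure I need the quotient map $E \to E/zE$ to be continuous (true, $zE$ closed) and the span of $\{\hat f_n\}$ in $E/zE$ to be closed; when $N < \infty$ a finite-dimensional subspace is automatically closed, so $E/zE = \operatorname{span}\{\hat f_0,\dots,\hat f_{N-1}\}$ and $\operatorname{ind}(E) \le N$, completing the proof. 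For $N = \infty$, the same density argument shows $\{\hat f_n\}_{n\ge 0}$ has dense span in $E/zE$, and the lower-bound argument shows any finite subset is linearly independent, so $\dim(E/zE) = \infty$.

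The \textbf{main obstacle} is the interplay in the lower bound between condition (2) — which is stated for $g \in E_n$ and $h$ in the \emph{closed span} $\bigvee_{k\neq n}E_k$ — and the fact that a general element of $zE$ is only a \emph{limit} of sums $z\sum_{j} g_j$ rather than such a sum exactly. Resolving this requires checking that the estimate $c_n|g(0)| \le \Vert g + h\Vert$ survives passing $h$ to limits (immediate, by continuity of evaluation at $0$, condition (ii)) and that $z\cdot\bigvee_k E_k \subseteq \bigvee_k E_k$ so that the "$z$" can be hidden inside $h$. Once these bookkeeping points are nailed down, linear independence of $\{\hat f_n\}$ in $E/zE$ drops out, and the two bounds together give $\operatorname{ind}(E) = N$.
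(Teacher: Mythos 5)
The paper does not prove Lemma 1.2 itself; it defers to Borichev \cite{Borichev1998}. Your strategy is the standard one for that lemma: the lower bound comes from using condition (2) to show the classes $f_n + zE$ are linearly independent in $E/zE$, and the upper bound comes from the one-dimensionality of each $E_n/zE_n$. The lower-bound half is essentially right, but two specific steps as written do not hold up. First, in the upper bound you assert that an index-one invariant subspace containing a function non-vanishing at $0$ is generated by that function, i.e.\ $E_n = [f_n]$. This does not follow from the hypotheses: index one together with $f_n(0)\neq 0$ gives only $E_n = \mathbb{C}f_n + zE_n$, and iterating yields $E_n = \{pf_n : \deg p \le k\} + z^{k+1}E_n$ with no control on the remainder term, so you cannot conclude that polynomial multiples of $f_n$ are dense in $E_n$ (whether index-one subspaces are singly generated, let alone generated by an arbitrary element non-vanishing at the origin, is a delicate question). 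Fortunately the claim is not needed: finite sums $\sum_n g_n$ with $g_n\in E_n$ are dense in $E$ by definition of $\bigvee_n E_n$, and each $g_n=\beta_nf_n+ze_n$ with $e_n\in E_n\subseteq E$, so this dense subspace already sits inside $\text{span}\{f_n\}+zE$, which for finite $N$ is closed (finite-dimensional plus closed); hence $\text{ind}(E)\le N$. Alternatively the upper bound is exactly Richter's subadditivity $\text{ind}(\mathcal{M}\vee\mathcal{N})\le\text{ind}(\mathcal{M})+\text{ind}(\mathcal{N})$, which the paper re-proves in the weak-star setting as Proposition 4.2.

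Second, in the parenthetical repair of the lower bound you propose applying (2) with $h=\sum_{j\neq n}g_j+zh'$, justified by $z\bigvee_kE_k\subseteq\bigvee_kE_k$. That inclusion puts $zh'$ into $\bigvee_kE_k$, not into $\bigvee_{k\neq n}E_k$: the component $zg'_n$ with $g'_n\in E_n$ lands in $E_n$, so this particular grouping is not admissible for (2). The correct bookkeeping is the one you sketch first: if $\sum_{n\in F}\alpha_nf_n = zg$ with $g\in E$, approximate $g$ by finite sums $\sum_jg_j$, $g_j\in E_j$, and apply (2) with the $E_n$-element $\alpha_nf_n - zg_n$ (whose value at $0$ is still $\alpha_nf_n(0)$, since $zg_n\in E_n$ by invariance and vanishes at $0$) and with $\sum_{j\neq n}(\alpha_jf_j - zg_j)\in\bigvee_{k\neq n}E_k$; then $c_n|\alpha_nf_n(0)|\le\bigl\Vert\sum_j\alpha_jf_j - z\sum_jg_j\bigr\Vert_X\to 0$, forcing $\alpha_n=0$ because $f_n(0)\neq 0$. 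With these two points repaired, your argument is complete and coincides with the intended proof.
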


\noindent A proof of the above lemma is given in \cite{Borichev1998}. In the original paper of S. Richter \cite{Richter1987} it is proven that when the shift operator $M_z$ is defined on such a space $X$, then $M_z$ is bounded from above and below, meaning also that $zE$ is closed whenever $E$ is closed. As a consequence the index is well defined for every closed invariant subspace. In our case the spaces $E_n$ will always be cyclic invariant subspaces $[f_n] := \overline{\text{span}} \{ pf_n \, : \, p \,\, \text{polynomial} \} $ with $f_n(0) \neq 0$, and it will be enough to check condition (2) for polynomial multiples of the generating functions. A proof of the fact that the Bloch space verifies the requirements of Lemma 1.2. will be provided in the Section 2. 

\vspace{3mm}

 With all the preliminary information established, we present the results contained in this article.

\begin{theorem}
For every $N \in \{ 1 , 2, \ldots , + \infty \}$ there exists an invariant subspace $E_N \sbs \BB$ such that $\text{ind}(E_N) = N $.
\end{theorem}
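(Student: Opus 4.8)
The plan is to apply Lemma 1.2 with $X = \BB$. Since the excerpt promises (in Section 2) that $\BB$ satisfies the division property (i)–(iv), the task reduces to producing, for each $N$, a family $\{f_n\}_{0 \le n < N}$ of lacunary Bloch functions with $f_n(0) \neq 0$ whose cyclic subspaces $E_n = [f_n]$ each have index one, and which satisfy the separation estimate (2): there are $c_n > 0$ with $c_n |g(0)| \le \no{g + h}_{\BB}$ for every $g \in E_n$ and $h \in \bigvee_{k \neq n} E_k$. Granting these, Lemma 1.2 immediately gives $\text{ind}(\bigvee_{0 \le n < N} E_n) = N$, and we set $E_N := \bigvee_{0 \le n < N} E_n$. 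The index-one property of each $E_n$ is the easy part: cyclic subspaces of the shift are singly generated, so $E_n / zE_n$ is spanned by the image of $f_n$, hence at most one-dimensional, and it is exactly one-dimensional because $f_n(0) \neq 0$ (if $f_n \in zE_n$ then $f_n = zg$ with $g \in E_n$, forcing $f_n(0) = 0$).

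The heart of the matter is engineering the lacunary series so that (2) holds. Following Borichev's strategy from \cite{Borichev1998}, I would take $f_n(z) = 1 + \sum_{j \ge 1} a^{(n)}_j z^{s^{(n)}_j}$ where the exponent sequences $\{s^{(n)}_j\}_j$ for different $n$ are "interlaced" but mutually very sparse, and the coefficients are chosen so that each $f_n$ sits in $\BB$ with nearly maximal growth — by Proposition 1.1 it suffices that $(a^{(n)}_j)_j$ be bounded, and "almost maximal growth" means pushing $|a^{(n)}_j|$ as close to the boundedness threshold as the gap condition permits, which is what gives the estimates teeth. The point of the lacunarity and the separation of supports is that for a polynomial multiple $p f_n$ and any element $h$ of the span of the other $E_k$'s, the Taylor coefficients of $p f_n$ supported on (a thinned subset of) $\{s^{(n)}_j\}$ can be "read off" from $g + h$ because $h$'s coefficients on those exponents are negligible; concretely one controls $|g(0)| = |p(0)|$ by evaluating the Bloch norm of $g + h$ along the coefficient behaviour near the natural boundary, exploiting that a lacunary Bloch function of near-maximal growth cannot be cancelled by a small perturbation without the Bloch norm detecting it.

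I would organise this as follows. First, fix the combinatorial data: choose pairwise disjoint lacunary sequences $\Lambda_n \subset \N$ (e.g. residue/scale tricks so that $\Lambda_n$ occupies widely separated "blocks" of integers, each block itself lacunary) and coefficients $a^{(n)}_j \to $ a nonzero constant or bounded away from $0$ along $\Lambda_n$, tuned to the gap ratio so that $f_n \in \BB \setminus \BB_0$ with the largest growth the gap theorem allows. Second, verify $f_n \in \BB$ via Proposition 1.1 and note $f_n(0) = 1 \neq 0$. Third — the main estimate — prove (2): take $g = p f_n \in E_n$ (enough by the remark after Lemma 1.2 that it suffices to check polynomial multiples) and $h \in \bigvee_{k\ne n} E_k$; using that $h$ is approximable by finite sums $\sum q_k f_k$ whose Taylor supports are concentrated on $\bigcup_{k \ne n}\Lambda_k$, and that these are disjoint from and sparse relative to $\Lambda_n$, bound $|g(0)|$ below by a fixed multiple of $\no{g+h}_\BB$ by testing the Bloch seminorm at radii $r_j \to 1$ chosen adapted to the exponents $s^{(n)}_j$, where the near-maximal size of $a^{(n)}_j$ forces a lower bound proportional to $|p(0)|$ that the cross terms from $h$ cannot erode. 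Fourth, assemble via Lemma 1.2 and handle $N = \infty$ by letting the construction run over all $n \in \N$.

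The main obstacle is the lower bound (2): one must simultaneously (a) keep every $f_n$ genuinely in the Bloch space — which caps the coefficient size — and (b) make the coefficients large enough, and the supports separated enough, that a Bloch-norm-small function $g + h$ is forced to have $p(0)$ small. These pull in opposite directions, and reconciling them is exactly where Borichev's quantitative lacunary estimates enter; the delicate point is choosing the test radii $r_j$ and the growth rate of the gaps so that the contribution of the single block around $s^{(n)}_j$ dominates both the tail of $f_n$ and the entire perturbation $h$. I expect the disjointness and super-lacunarity of the $\Lambda_n$ to be what makes the perturbation harmless, while Makarov-type / direct Cauchy-estimate bounds on $(1-|z|^2)|f'(z)|$ handle the membership in $\BB$.
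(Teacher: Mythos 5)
Your proposal takes essentially the same route as the paper: cyclic subspaces generated by lacunary series of the form $1+\sum_n z^{s(n,i)}$ with interlaced, strongly separated exponents, index one for each generator, and condition (2) of Lemma 1.2 checked on polynomial multiples by testing the Bloch seminorm at radii adapted to a single exponent of the distinguished generator, where that monomial contributes about $\frac{1}{\sqrt{e}}\,\no{p_0}_{\infty}$ while the tail, the other generators, and the $p'f$ cross terms are negligible. The quantitative separation you identify as the main obstacle is exactly what the paper's Lemma 2.1 supplies via an inductive choice of the exponents (its condition (iii)), so your outline matches the actual argument.
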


\noindent Since the Bloch space is non-separable with that norm we are able to produce an example of an invariant subspace with uncountable index.

\begin{theorem}
There exists an invariant subspace $E$ of $\BB$ such that $\text{ind}(E) = \text{card}([0,1])$.
\end{theorem}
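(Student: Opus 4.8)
The plan is to meet the only \emph{a priori} restriction on the index. First note that a function in the Bloch space is determined by its (countable) sequence of Taylor coefficients, so $\text{card}(\BB)=\text{card}([0,1])$; consequently $\text{ind}(F)=\dim(F/zF)\le\text{card}(F)\le\text{card}([0,1])$ for every closed invariant $F\sbs\BB$. It therefore suffices to produce a single closed invariant $E$ with $\text{ind}(E)\ge\text{card}([0,1])$, and I will do so by running the mechanism of Lemma 1.2 over a continuum of cyclic subspaces built from lacunary series — something made possible here by the non-separability of $\BB$.

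Fix an \emph{almost disjoint} family $\{A_t\}_{t\in[0,1]}$ of infinite subsets of $\N$ (so $A_s\cap A_t$ is finite whenever $s\neq t$) of cardinality $\text{card}([0,1])$; such families exist by a classical construction. Let $(s_n)_{n\ge 0}$ be a lacunary sequence with $s_0=0$, increasing fast enough for the pointwise estimates used in the proof of Theorem 1.3 to be available, and set
\[
f_t(z)=1+\sum_{n\in A_t,\, n\ge 1}z^{s_n},\qquad t\in[0,1].
\]
The exponent set $\{0\}\cup\{s_n:n\in A_t\}$ is again lacunary and the coefficients are bounded, so $f_t\in\BB$ by Proposition 1.1 (and $f_t\notin\BB_0$, since the coefficients do not tend to $0$ — this is exactly where non-separability enters). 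Also $f_t(0)=1\neq 0$, so the cyclic subspace $E_t:=[f_t]$ has index one: $z[f_t]=\overline{\text{span}}\{qf_t:q\text{ a polynomial},\,q(0)=0\}$ consists of functions vanishing at $0$, and $pf_t\equiv p(0)f_t\pmod{z[f_t]}$ for every polynomial $p$, so $E_t/zE_t$ is spanned by the class of $f_t$, which is nonzero because $f_t(0)\neq 0$.

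Put $E:=\bigvee_{t\in[0,1]}E_t$. The decisive point is the separation estimate of Lemma 1.2 for this family: for each $t$ there is $c_t>0$ with $c_t|g(0)|\le\no{g+h}_{\BB}$ for all $g\in E_t$ and $h\in\bigvee_{s\neq t}E_s$. Granting it, the classes of the $f_t$ in $E/zE$ are linearly independent, which gives $\text{ind}(E)\ge\text{card}([0,1])$ and, with the first paragraph, $\text{ind}(E)=\text{card}([0,1])$. Indeed, suppose $\sum_{i=1}^m\lambda_i f_{t_i}\in zE$ with $t_1,\dots,t_m$ distinct. Since $zE=\overline{\text{span}}\{qf_s:s\in[0,1],\,q(0)=0\}$, choose finite sums $\psi_k=\sum_s q_{k,s}f_s$ with $q_{k,s}(0)=0$ and $\psi_k\to\sum_i\lambda_i f_{t_i}$ in $\BB$. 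Then $\sum_i\lambda_i f_{t_i}-\psi_k$ equals $(\lambda_1-q_{k,t_1})f_{t_1}$, which lies in $E_{t_1}$ and has value $\lambda_1 f_{t_1}(0)$ at $0$ (as $q_{k,t_1}(0)=0$), plus a finite combination of polynomial multiples of the $f_s$ with $s\neq t_1$, which lies in $\bigvee_{s\neq t_1}E_s$; the estimate yields $c_{t_1}|\lambda_1 f_{t_1}(0)|\le\no{\sum_i\lambda_i f_{t_i}-\psi_k}_{\BB}\to 0$, so $\lambda_1=0$, and by symmetry all $\lambda_i=0$. This argument touches only finitely many indices at a time, so the uncountability of the index set causes no difficulty.

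The main obstacle is verifying the separation estimate. By density it is enough to bound $\no{g+h}_{\BB}$ from below by $c_t|g(0)|$ when $g=pf_t$ for a polynomial $p$ and $h=\sum_{s\in F}p_s f_s$ for a finite $F\sbs[0,1]\setminus\{t\}$. Almost disjointness is what makes this tractable: $A_t\setminus\bigcup_{s\in F}A_s$ is still infinite, so $f_t$ carries infinitely many exponents $s_n$ contributed by none of the competing $f_s$, $s\in F$. Using the rapid growth of $(s_n)$ one then locates, exactly as in the proof of Theorem 1.3, radii $r_n\uparrow 1$ on which the $z^{s_n}$-term of $pf_t$ dominates in $(1-r^2)|(g+h)'(z)|$ up to a fixed multiple of $|p(0)|=|g(0)|$ — the polynomial factors not disturbing the leading coefficient near the boundary, and the other $f_s$ being unable to cancel that term — and taking the supremum over $n$ gives (2) with a constant $c_t$ depending only on $t$, which is all that is required. (An alternative route to the lower bound is to invoke the $N=+\infty$ case of Theorem 1.3 together with the fact that the infinite-dimensional Banach space $F/zF$, $zF$ being closed, has Hamel dimension at least $\text{card}([0,1])$; but the explicit continuum family above is the construction that genuinely uses the non-separability of $\BB$.)
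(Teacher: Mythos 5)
Your proposal is correct and follows essentially the same route as the paper: a single lacunary exponent sequence $(s_n)$ with the Theorem 1.3 estimates, a continuum family of infinite subsets of $\N$ whose members are "infinitely independent" of any finite subfamily (your almost disjoint family immediately yields the property the paper imports from Niwa's Lemma 3.2), the functions $1+\sum z^{s_n}$ supported on those subsets, and the separation estimate of Lemma 1.2 verified along the radii $r_{s_{k_n}}$ for exponents $k_n$ belonging to none of the finitely many competing sets. The only cosmetic differences are your explicit reduction from the separation estimate to linear independence of the quotient classes and the parenthetical Hamel-dimension remark, neither of which changes the argument.
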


\noindent The above two theorems utilise elementary properties of the Bloch space and its norm, and the vectors generating the cyclic subspaces are constructed inductively. If one wants to pass from the Bloch space to its ``little-oh'' analogue, one will be forced to use different techniques. Since functions in the little Bloch space satisfy $(1-|z|^2)|f'(z)| \ra 0$ as $|z| \ra 1^-$, the norm on the space is unable to capture the behaviour of the function by looking close to the boundary. This impediment proves fatal to the argument used in Theorems 1.2 and 1.3, but gave us motivation to develop a new argument, which requires the construction of lacunary functions with several special properties. The growth of the functions, the most crucial part of their behaviour, is studied with the aid of a classical theorem of R. Salem and A. Zygmund about the distribution function of trigonometric series. In this approach we construct functions which are almost maximally large on sufficiently massive subsets of the unit disc, and use an iterative argument to prove good bounds on the $L^p$ means of polynomials on the aforementioned sets.

\begin{theorem}
For every $N \in \{ 1 , 2, \ldots , + \infty \}$ there exists an invariant subspace $E_N \sbs \BB_0$ such that $\text{ind}(E_N) = N $.
\end{theorem}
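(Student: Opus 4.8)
The plan is to construct, for a fixed $N \in \{1,2,\dots,+\infty\}$, a family of lacunary functions $f_0, f_1, \dots$ in $\BB_0$ (so with coefficients tending to $0$, by Proposition 1.1), set $E_n = [f_n]$ to be the cyclic invariant subspace generated by $f_n$, and then verify the hypotheses of Lemma 1.2 for $E_N := \bigvee_{0 \le n < N} E_n$. Since each cyclic subspace $[f_n]$ with $f_n(0) \neq 0$ automatically has index one in a space with the division property, and the Bloch space (hence its closed subspace $\BB_0$, which also satisfies the division property — this should be remarked) meets the requirements of Lemma 1.2, the entire burden falls on producing functions for which the separation estimate
\begin{equation*}
c_n |g(0)| \le \no{g + h}_{\BB_0}, \qquad g \in [f_n], \ h \in \bigvee_{k \neq n} [f_k]
\end{equation*}
holds. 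As the excerpt indicates, it suffices to check this for $g = p f_n$ and $h$ a limit of combinations $\sum_{k \neq n} q_k f_k$ with $p, q_k$ polynomials.

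The key mechanism, following Borichev's strategy but adapted to $\BB_0$, is to choose the lacunary exponents of the different $f_n$ to live on disjoint "scales" and to arrange that each $f_n$ is \emph{almost maximally large} — i.e. of order close to $\log \frac{1}{1-|z|}$, the maximal Bloch growth from inequality (1) — on a large subset $\Omega_n$ of the disc, a union of annuli whose radii are tuned to the $n$-th block of exponents. The heart of the matter is then a lower bound on $\no{pf_n + h}_{\BB_0}$ in terms of $|p(0) f_n(0)|$. The idea is: on the set $\Omega_n$ the function $pf_n$ is forced to be large (because $f_n$ is large and $p(0) \neq 0$ controls $p$ from below near the origin, or one localizes using the scale structure), whereas the "competitor" $h$, being built from exponents on the \emph{other} scales, is comparatively small on $\Omega_n$; if $\no{pf_n + h}_{\BB_0}$ were too small, then $pf_n + h$ would be a little Bloch function that is nonetheless large on $\Omega_n$, contradicting a growth bound. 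Turning this heuristic into a rigorous estimate is exactly where the classical Salem–Zygmund theorem on the distribution function of trigonometric (here lacunary) sums enters: it gives that a lacunary polynomial of a given $\ell^2$-size is, on a set of circle-measure bounded below, comparable to its $L^2$ norm, which lets one pass from pointwise largeness on annuli to genuine $L^p$-mean lower bounds on the relevant annular sets. One then runs an iterative/inductive construction over $n$, at each step choosing the next block of exponents sparse enough and the next set $\Omega_{n+1}$ deep enough so that the previously constructed functions are negligible there and the new function is almost maximal there, while controlling the interaction in the other direction.

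Concretely I would proceed in the following order. First, record that $\BB_0$ satisfies the division property and that Lemma 1.2 applies to it; also record the maximal-growth bound (1) restricted to $\BB_0$. Second, state the Salem–Zygmund distribution estimate in the precise form needed (a set of $\zeta \in \T$ of measure $\gtrsim 1$ on which a lacunary trigonometric sum is $\gtrsim$ its $\ell^2$ norm, with the implied constants uniform). Third, set up the block structure: partition $\N$ into disjoint lacunary blocks $S_0, S_1, \dots$, put $f_n(z) = \sum_{s \in S_n} a_{n,s} z^s$ with $|a_{n,s}|$ chosen to decay slowly enough that $f_n$ attains growth close to $\log\frac{1}{1-|z|}$ on an annular set $\Omega_n = \{ r_{n,j} \le |z| \le \rho_{n,j}\}$ yet $a_{n,s} \to 0$ so $f_n \in \BB_0$; choose $a_{n,0} \neq 0$ so $f_n(0) \neq 0$. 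Fourth, prove the separation inequality (2): given $p$ and $h = \lim \sum_{k\neq n} q_k f_k$, estimate $\no{pf_n + h}_{\BB_0}$ from below on $\Omega_n$ using the iterative $L^p$-mean bounds on polynomials over $\Omega_n$ (this is the step where the lacunary structure and Salem–Zygmund do the work), deducing $\no{pf_n+h}_{\BB_0} \ge c_n |p(0)f_n(0)| = c_n|g(0)|$. Fifth, apply Lemma 1.2 to conclude $\text{ind}(\bigvee_{0 \le n < N}[f_n]) = N$.

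The main obstacle I anticipate is the fourth step — the lower bound for $\no{pf_n + h}_{\BB_0}$ — for two intertwined reasons. First, the little Bloch norm "forgets" boundary behaviour, so one cannot simply read off largeness of $pf_n+h$ near $|z|=1$; one must work at intermediate radii where $\Omega_n$ lives, and carefully quantify how $\no{\cdot}_{\BB_0}$ controls $|f(z)|$ on a fixed annulus via (1). Second, $h$ is an arbitrary element of an infinite closed span, not a polynomial, and its pieces on blocks $S_k$ with $k$ both larger and smaller than $n$ interact differently with $\Omega_n$; handling the low blocks ($k<n$, whose functions are bounded but not small on $\Omega_n$) versus the high blocks ($k>n$) requires the sets $\Omega_n$ and the coefficient decays to be chosen with foresight, and the iterative $L^p$-mean argument must be set up so the constants $c_n$ do not degenerate. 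Getting all these choices mutually compatible — lacunarity of the union of blocks, almost-maximal growth of each $f_n$ on $\Omega_n$, smallness of the others there, and membership in $\BB_0$ — is the delicate bookkeeping at the core of the proof.
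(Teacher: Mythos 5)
Your overall strategy coincides with the paper's: disjoint lacunary blocks for the different generators, Salem--Zygmund to make each block large on a massive subset of its associated circle, an iterative bound on $L^p$ means of the polynomials on those sets, and Lemma 1.2 to sum the indices. But the central step (your step four) contains a genuine gap, in two related places. First, the claim that each $f_n$ can be made ``of order close to $\log\frac{1}{1-|z|}$'' on a large subset of the disc is impossible: by Makarov's inequality the $L^2$ mean of a Bloch function over the circle of radius $r$ is $O\bigl(\no{f}_{\BB}\sqrt{\log\frac{1}{1-r}}\bigr)$, so no Bloch function can be of size comparable to $\log\frac{1}{1-r}$ on a subset of $r\T$ of measure bounded below. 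The correct, achievable scale --- and the one Salem--Zygmund actually delivers for a lacunary block of length about $s$ at its associated radius $r_s$ --- is $\sqrt{\log\frac{1}{1-r_s}}$, i.e.\ the $L^2$ norm of the block; this is consistent with the second half of your own paragraph, so the heuristic is internally inconsistent.

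Second, and consequently, the proposed contradiction (``$pf_n+h$ would be a little Bloch function that is nonetheless large on $\Omega_n$, contradicting a growth bound'') cannot be extracted from the pointwise bound (1): a function of size $\sqrt{\log\frac{1}{1-r}}$ on a large subset of $r\T$ is perfectly compatible with (1). The missing ingredient is Makarov's inequality in exponential form, $\int_{r\T}\exp\bigl\{|F|^2/(8\log\frac{1}{1-r})\bigr\}\,dm\le 2$ whenever $\no{F}_{\BB}\le 1$. It is this upper bound, combined with Jensen's inequality and the Salem--Zygmund lower bound on the set where the block native to the radius $r(i_n,J-1)$ is large, that forces the measure of the set where $|p_n|$ exceeds $2^{J-2}$ to be small; that measure estimate is precisely what propagates through the backward induction on the $L^{2^J}$ means (starting from large $J$, where the bounds are trivial since the $p_m$ are polynomials) down to a bound on $|p_n(0)|$ independent of the polynomials. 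Without a quantitative upper bound operating at the same $\sqrt{\log}$ scale as the lower bound, the iteration you sketch does not close. The remainder of your plan --- division property for $\BB_0$, reduction to finite polynomial combinations, and the bookkeeping making every foreign block small at each chosen radius --- matches what the paper does.
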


\noindent Finally, we turn to a more abstract setting and extend several results from S. Richter's paper \cite{Richter1987} for the weak-star topology. In particular, let $X_0, X, Y$ be Banach spaces of analytic functions satisfying the division property, and that satisfy the following dualities: $X_0^{\ast} \cong Y$ and  $Y^{\ast} \cong X$. The space $X_0$ can always be continuously embedded into $X$. Given an invariant subspace $E \sbs X_0$, write $\overline{E}^{w^{\ast}}$ for its weak-star closure in $X$. Under some natural assumptions on the spaces we prove the following theorem: 

\begin{theorem}
Let $E$ be an invariant subspace of $X_0$ such that $\text{ind}(E)= N$ for some $N \in \{1, 2, \ldots ,+\infty \}$. Then the subspace $\overline{E}^{w^{\ast}} \sbs X$ is invariant, weak-star closed and $\text{ind}(\overline{E}^{w^{\ast}}) = N$.
\end{theorem}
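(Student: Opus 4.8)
The plan is to track what the weak-star closure operation does to the quotient $E/zE$, using the duality $Y^\ast \cong X$ together with the division property. I would organize the argument into three parts: first, that $\overline{E}^{w^\ast}$ is a genuine invariant subspace of $X$ on which $M_z$ is bounded below (so its index is well-defined); second, that $\text{ind}(\overline{E}^{w^\ast}) \le N$; and third, the reverse inequality $\text{ind}(\overline{E}^{w^\ast}) \ge N$. The first part is essentially soft: $\overline{E}^{w^\ast}$ is weak-star closed by construction, and invariance follows because $M_z$ is weak-star continuous on $X$ (it is the adjoint of a bounded operator on $Y$, namely the dual action of multiplication), so $M_z \overline{E}^{w^\ast} \subseteq \overline{M_z E}^{w^\ast} \subseteq \overline{E}^{w^\ast}$; and $z\overline{E}^{w^\ast}$ is closed because $X$ satisfies the division property, invoking the Richter-type result quoted after Lemma 1.2.

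For the inequality $\text{ind}(\overline{E}^{w^\ast}) \le N$, I would argue that a spanning set for $E/zE$ still spans $\overline{E}^{w^\ast}/z\overline{E}^{w^\ast}$. Concretely, pick $f_0, \ldots$ (indexed by $n < N$) in $E$ whose images span $E/zE$; then $E = zE + \text{span}\{f_n\}$, and taking weak-star closures, $\overline{E}^{w^\ast} \subseteq \overline{zE}^{w^\ast} + \text{span}\{f_n\}$ (the span is finite-dimensional when $N<\infty$, hence weak-star closed; for $N = \infty$ one argues with finite sub-sums and a density/limiting step). Since $\overline{zE}^{w^\ast} \subseteq z\overline{E}^{w^\ast}$, this gives $\overline{E}^{w^\ast} = z\overline{E}^{w^\ast} + \text{span}\{f_n\}$, so the quotient is spanned by at most $N$ vectors.

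The reverse inequality $\text{ind}(\overline{E}^{w^\ast}) \ge N$ is where the real work lies, and it is the main obstacle: passing to the weak-star closure could in principle collapse the quotient, so one must exhibit enough bounded linear functionals on $X$ that separate the classes $f_n + z\overline{E}^{w^\ast}$. The mechanism I would use is the one underlying Lemma 1.2 and condition (2): since $\text{ind}(E) = N$, for each $n$ there is a functional (morally, evaluation of a Taylor coefficient, coming from boundedness of $k_\lambda$ and the division property) that is bounded on $E$, vanishes on $zE$, and is nonzero on $f_n$. The key point is that such functionals extend to weak-star continuous functionals on $X$, i.e. they come from elements of $Y$ via $Y^\ast \cong X$; evaluation functionals and coefficient functionals do have this property because point evaluations lie in $X_0^\ast \cong Y$ and embed compatibly. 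Then such a functional automatically vanishes on $\overline{zE}^{w^\ast}$, hence on $z\overline{E}^{w^\ast}$ (using $z\overline{E}^{w^\ast} = \overline{zE}^{w^\ast}$, which itself needs the division property to identify these two sets), and remains nonzero on $f_n$, so the $N$ classes are linearly independent in $\overline{E}^{w^\ast}/z\overline{E}^{w^\ast}$.

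The delicate technical step, which I would isolate as a preliminary lemma, is the identity $z\overline{E}^{w^\ast} = \overline{zE}^{w^\ast}$: the inclusion $\supseteq$ is trivial, while $\subseteq$ requires that multiplication by $z$ be a weak-star homeomorphism onto its (weak-star closed) range, which is exactly what the division property buys via the open mapping / bounded-below argument from Richter's paper, now transported to the weak-star setting. Once this identity and the extension property of the separating functionals are in place, both inequalities close and the theorem follows; the application to $\BB_0 \hookrightarrow \BB$ with $Y$ the appropriate predual (a weighted Bergman-type space) is then immediate from Theorem 1.4.
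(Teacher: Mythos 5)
Your proposal is correct in outline, but it reaches the conclusion by a route that differs from the paper's in both halves. For the upper bound $\text{ind}(\overline{E}^{w^{\ast}}) \le N$, you work directly with a spanning set: $E = zE + \text{span}\{f_n\}$, pass to weak-star closures, and use that a weak-star closed subspace plus a finite-dimensional one is weak-star closed. The paper instead decomposes $E$ into $N$ index-one invariant subspaces (its Proposition 4.2), shows that weak-star closure preserves index one (Proposition 4.7, via the characterizations of index one in Propositions 4.3--4.4), and then applies subadditivity of the index under $\vee$. The underlying mechanism (the continuous projection onto $X/z(\mathcal{M}\vee\mathcal{N})$ showing that finite-dimensional perturbations stay closed) is the same; your version is shorter but you should still spell out why $\overline{zE}^{w^{\ast}} + \text{span}\{f_n\}$ is weak-star closed. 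For the lower bound --- the substantive half --- you use Hahn--Banach to produce, for each $n$, a functional in $X_0^{\ast}\cong Y$ that kills $zE + \text{span}\{f_m : m\neq n\}$ and is nonzero on $f_n$, then note that elements of $Y$ act weak-star continuously on $X=Y^{\ast}$ and therefore annihilate $z\overline{E}^{w^{\ast}} \sbs \overline{zE}^{w^{\ast}}$. The paper argues by contradiction instead: if $\sum_m \lambda_m f_m = zh$ with $h \in \overline{E}^{w^{\ast}}$, then $zh\in X_0$, the division property in $X_0$ forces $h\in X_0$, and its Lemma 4.9 ($\overline{\mathcal{M}}^{w^{\ast}}\cap X_0 = \mathcal{M}$ for norm-closed convex $\mathcal{M}$, proved via Mazur's theorem) forces $h\in E$, contradicting independence in $E/zE$. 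Both arguments ultimately rest on separation, but yours makes the separating functionals explicit, whereas the paper hides them inside Mazur's theorem and leans on the division property of $X_0$ rather than on $z\overline{E}^{w^{\ast}}\sbs\overline{zE}^{w^{\ast}}$. Your approach also makes visible the one hypothesis both proofs need but neither states loudly: the embedding $X_0\hookrightarrow X$ must be the canonical map induced by the two pairings, so that an element of $Y$ restricted to $X_0\sbs X$ recovers the original functional.

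Two small corrections. First, you have the difficulties of the two inclusions in $z\overline{E}^{w^{\ast}} = \overline{zE}^{w^{\ast}}$ reversed: the inclusion $z\overline{E}^{w^{\ast}} \sbs \overline{zE}^{w^{\ast}}$ is the easy one (weak-star continuity of $M_z$ sends weak-star limits of nets in $E$ to weak-star limits of nets in $zE$), while $\overline{zE}^{w^{\ast}} \sbs z\overline{E}^{w^{\ast}}$ is the one requiring weak-star continuity of the division operator so that $z\overline{E}^{w^{\ast}}$ is weak-star closed. Fortunately the lower bound only uses the easy inclusion. Second, the application at the end should invoke Theorem 1.5 (arbitrary finite or countable index in $\BB_0$) together with $Y=A^1$, not Theorem 1.4.
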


We may then equip the Bloch space with the weak-star topology, inherited from its pre-dual, which can be identified as the Bergman space $A^1(\D)$. This weaker topology makes the Bloch space separable, and can be viewed as a more natural choice for topology when studying certain problems, such as existence of cyclic vectors of the shift operator. With this set-up, it makes sense to look for invariant subspaces that are weak-star closed. Since the space becomes separable with the weak-star topology we cannot expect the index to be as large as we demonstrate in Theorem 1.3, but at most countable. Theorem 1.6 can be combined with Theorem 1.4 to obtain weak-star closed, invariant subspaces of arbitrary large, index for the Bloch space.

\begin{theorem}
For every $N \in \{ 1 , 2, \ldots , + \infty \}$ there exists a weak-star closed invariant subspace $E_N \sbs \BB$ such that $\text{ind}(E_N) = N $.
\end{theorem}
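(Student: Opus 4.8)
The plan is to derive Theorem 1.7 directly from Theorems 1.4 and 1.6, which is exactly the combination the text advertises. First I would verify that the triple $(X_0, X, Y) = (\BB_0, \BB, A^1(\D))$ fits the abstract framework of Theorem 1.6: the little Bloch space $\BB_0$ is the predual of the Bergman space $A^1(\D)$ in the sense that $\BB_0^\ast \cong A^1$, and $A^1(\D)^\ast \cong \BB$, with $\BB_0$ continuously (and isometrically, after the obvious identification) embedded into $\BB$. I would also recall that all three spaces are Banach spaces of analytic functions on $\D$ satisfying the division property: boundedness of point evaluations is standard, invariance under $M_z$ was noted already, and the division property (iv) for $\BB_0$, $\BB$, $A^1$ follows because if $f$ vanishes at $\lambda$ then $g(z) = f(z)/(z-\lambda)$ is again analytic on $\D$ and a routine norm estimate (using, for $\BB$ and $\BB_0$, the pointwise growth bound (1) together with a Möbius change of variable, and for $A^1$ a direct integral estimate) shows $g$ lies in the same space. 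These are precisely the ``natural assumptions'' under which Theorem 1.6 is stated, so I would simply cite the verification carried out in Section 2 (for $\BB$) and its analogue for $\BB_0$ and $A^1$.

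Next, fix $N \in \{1, 2, \ldots, +\infty\}$. By Theorem 1.4 there is an invariant subspace $E_N \subset \BB_0$ with $\text{ind}(E_N) = N$. Applying Theorem 1.6 with this $E_N$, the weak-star closure $\overline{E_N}^{w^\ast} \subset \BB$ is a weak-star closed, $M_z$-invariant subspace with $\text{ind}(\overline{E_N}^{w^\ast}) = N$. Setting this subspace to be the desired $E_N \subset \BB$ (abusing notation) completes the proof. The only point that needs a word of care is that the weak-star topology on $\BB$ referred to in Theorem 1.7 — the one inherited from the predual $A^1(\D)$ — is the same topology as the $\sigma(Y^\ast, Y)$ topology featuring in Theorem 1.6 with $Y = A^1$; this is immediate from the identification $A^1(\D)^\ast \cong \BB$, so I would just note it explicitly to keep the logical chain transparent.

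I do not expect a genuine obstacle here, since all the substantive work is done in Theorems 1.4 and 1.6; the proof of Theorem 1.7 is essentially bookkeeping. If anything, the one step deserving more than a sentence is checking the division property and the duality hypotheses for the concrete triple $(\BB_0, \BB, A^1)$ so that Theorem 1.6 genuinely applies — in particular confirming that $\BB_0$, as opposed to merely some abstract predual, is the space whose invariant subspaces we start from, and that its image in $\BB = (A^1)^\ast$ under the canonical embedding $X_0 \hookrightarrow X_0^{\ast\ast} = Y^\ast = X$ is the familiar inclusion $\BB_0 \subset \BB$. I would dispatch this with a short remark rather than a separate lemma, since each ingredient is classical, and then state that Theorem 1.7 follows at once by combining Theorems 1.4 and 1.6.
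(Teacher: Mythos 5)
Your proposal is correct and follows essentially the same route as the paper: take an index-$N$ invariant subspace of $\BB_0$ and apply the stability theorem (Theorem 1.6) with $X_0=\BB_0$, $Y=A^1$, $X=\BB$. The only slip is a citation label: the existence of index-$N$ invariant subspaces in $\BB_0$ is Theorem 1.5, not Theorem 1.4 (which concerns uncountable index in $\BB$) --- a mislabeling the paper's own introduction also makes --- and your extra care in checking the duality and division-property hypotheses for the triple $(\BB_0,\BB,A^1)$ is consistent with what Sections 2 and 4 establish.
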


\noindent Theorem 1.7 provides an interesting antithesis to a phenomenon previously known in $H^{\infty}$. There, the invariant subspaces behave quite differently when we pass from a strong to a weaker topology. On one hand, there are plenty of examples demonstrating that norm-closed invariant subspaces may have arbitrary large index. On the other hand, once the space is equipped with the weak-star topology, it a Beurling-type theorem will hold, as in the classical case. Theorem 1.7 demonstrates that it is no longer the case in the Bloch space.

\begin{remark}
    In each one of the Theorems 1.3, 1.5, 1.7 it is actually proven that there is a sequence of functions $f_n$, $1 \leq n < \infty$ in the appropriate space (either $\BB$ or $\BB_0$) such that for every $1 \leq n_1 < \cdots < n_N < \infty$, the invariant subspace $\bigvee_{1 \leq k \leq N}[f_{n_k}]$ has index equal to $N$. 
\end{remark}

The paper is organized as follows. Section 2 is dedicated to the proof of Theorems 1.3 and 1.4. In section 3 we provide the proof as well as the necessary backgroung for Theorem 1.5. Finally section 4 is left to prove Theorem 1.6 and Corollary 1.7, as well as to extend the required results from Richter's article.


\section{Invariant subspaces in the Bloch space}

We begin by verifying that the Bloch space satisfies the hypotheses of Lemma 1.2, i.e, that it is indeed a Banach space of analytic functions satisfying the division property. The case of the little Bloch space is almost identical; one simply needs to verify that dividing out the zero of a given function in the little Bloch space, results in a function which is still in the little Bloch space.  Property \it{(i)} is obvious, and inequality (1) guarantees \it{(ii)} and \it{(iii)} are satisfied. It remains to check \it{(iv)}, a.k.a the division property. Let $E_0 = \{ f \in \BB \, | \, f(0) = 0 \} $, and $R_0 : E_0 \ra \BB $ the operator that maps $f \mapsto f(z) / z  $. We will prove that $R_0$ is bounded, and hence well-defined as well. The general case $\lambda \in \D$ follows from the Möbius invariance of the Bloch space.

Consider $f \in \BB$ with $f(0) = 0$ and fix $0 < s < 1 $. We can find an analytic function $g \in Hol(\D)$ such that $f(z) = zg(z)$. We have:

\begin{gather*}
\Vert R_0(f) \Vert_{\BB} = |g(0)| + \sup_{|z|<1} (1-|z|^2)|g'(z)|.
\end{gather*}

Obviously we have that $g(0) = f'(0)$ so $|g(0)| \leq \Vert f \Vert_{\BB}$. For the rest we may write :

\begin{gather*}
\sup_{|z|<1} (1-|z|^2)|g'(z)| \leq \max \bigg\{\sup_{|z|\leq s} (1-|z|^2)|g'(z)| , \sup_{s<|z|<1} (1-|z|^2)|g'(z)|  \bigg\}
\end{gather*}

Let $0< \eps < 1-s$. and define $\gamma$ to be the anti-clockwise oriented circle centered at the origin and of radius $s + \eps $. Then, when $|z|< s + \eps$ we get by Cauchy's formula:

\begin{gather*}
g'(z) = \frac{1}{2\pi i} \int_{\gamma} \frac{g'(\zeta)}{\zeta - z} \,d\zeta =\frac{1}{2\pi i} \int_{\gamma} \frac{f'(\zeta)-g(\zeta)}{\zeta(\zeta - z)} \,d\zeta = \\
= \frac{1}{2\pi i}\int_{\gamma}\frac{f'(\zeta)}{\zeta(\zeta - z)} \,d\zeta -\frac{1}{2\pi i} \int_{\gamma} \frac{\zeta g(\zeta)}{\zeta^2(\zeta - z)} \,d\zeta = \\
= \frac{1}{2\pi i}\int_{\gamma} \frac{f'(\zeta)(1-|\zeta|^2)}{\zeta(\zeta - z)(1-|\zeta|^2)} \,d\zeta - \frac{1}{2\pi i}\int_{\gamma} \frac{\zeta g(\zeta)(1-|\zeta|^2)}{\zeta^2(\zeta - z)(1-|\zeta|^2)} \,d\zeta.
\end{gather*}

From this we get the estimate:

\begin{align*}
\sup_{|z| \leq s}(1-|z|^2)|g'(z)| & \leq \sup_{|z| \leq s+\eps}(1-|z|^2)|f'(z)|\cdot \frac{1}{s}\frac{1}{1-(s+\eps)^2}  \\
& + \quad \, \sup_{|z| \leq s+\eps}|(1-|z|^2)|f(z)|\cdot \frac{1}{s(s+\eps)}\frac{1}{1-(s+\eps)^2}.
\end{align*}

This is true for every $\eps >0$ small enough, hence the above inequality, along with the fact that the integration operator is bounded on the Bloch space (inequality (1)), we get that there is a constant $C(s)>0$, independent of $f$ such that $\sup_{|z|<s} (1-|z|^2)|g'(z)| \leq C(s) \no{f}_{\BB}$. On the other hand : 

\begin{gather*}
\sup_{s <|z|<1} (1-|z|^2)|g'(z)| \leq \frac{1}{s} \sup_{s <|z|<1} (1-|z|^2)|zg'(z)| = \frac{1}{s} \sup_{s <|z|<1} (1-|z|^2)|f'(z) - g(z)| \leq \\
\leq \frac{1}{s} \sup_{s <|z|<1} (1-|z|^2)|f'(z)| + \frac{1}{s} \sup_{\eps <|z|<1} (1-|z|^2)|g(z)| \leq \\
\leq \frac{1}{s} \sup_{s <|z|<1} (1-|z|^2)|f'(z)| + \frac{1}{s^2} \sup_{s <|z|<1} (1-|z|^2)|f(z)|.
\end{gather*}

Similarly we obtain a constant $C'(s)>0$ such that $\sup_{s< |z|<1} (1-|z|^2)|g'(z)| \leq C'(s) \no{f}_{\BB}$. Overall $\no{g}_{\BB} \leq \max\{1, C(s), C'(s)\} \no{f}_{\BB}$, and the proof is finished. 

\vspace{3mm}

We may pass to the proof of Theorems 1.3 and 1.4. For every $n\in \N$ we introduce the auxiliary function $U_n(r) = (1-r^2)nr^{n-1}$ , $0 \leq r < 1$. Note that $\sup_{0<r<1}U_n(r) = \no{z^n}_{\BB}$. We also consider the sequence of radii $r_n = (1-\frac{1}{n})^{\frac{1}{2}}$. The radius $r_n$ is sufficiently close to the maximizing point of the function $U_n$, and we see that $U_n(r_n) \ra \frac{1}{\sqrt{e}}$. We begin by stating a lemma that describes the construction of several lacunary sequences with some additional properties. 

\begin{lemma}
There exists a sequence $\{ s(n,i)\}_{1 \leq i \leq n < \infty } \sbs \N$ such that:

\begin{enumerate}[label=(\roman*)]
\item $1 < s(1,1) < s(2,1) < s(2,2) < s(3,1) < s(3,2) < s(3,3) < s(4,1) < s(4,2) < \cdots$

\vspace{1mm}

\item For every $1 \leq i \leq n$, we have $\frac{s(n+1,i)}{s(n,i)} \geq 2$, for all $n \in \N$.

\vspace{1mm}

\item For every $(n,i) \neq (n',i')$ we have $U_{s(n,i)}(r_{s(n',i')}) < \frac{1}{2^{n+n'}}$.
\end{enumerate}

\end{lemma}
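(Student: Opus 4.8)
The plan is to build the sequence $\{s(n,i)\}$ by a single induction along the linear order from (i), choosing each new exponent $s(n,i)$ one at a time, and at each stage imposing finitely many and one ``tail'' condition. Enumerate the index pairs as $(1,1),(2,1),(2,2),(3,1),(3,2),(3,3),\dots$ and call them $p_1,p_2,\dots$; I will choose $s_{p_1},s_{p_2},\dots$ in $\N$ recursively. The conditions (i) and (ii) are easy to maintain by simply taking $s_{p_{k+1}}$ large: (i) needs $s_{p_{k+1}}>s_{p_k}$, and (ii) needs, for the finitely many pairs $p_j=(n,i)$ with $j\le k$ and $n+1\le$ the current level, that $s_{p_{k+1}}\ge 2 s_{(n,i)}$ when $p_{k+1}=(n+1,i)$ — a single lower bound. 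So the whole content is condition (iii), which I treat next.

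For (iii), the key analytic input is the asymptotic behaviour of $U_m(r)=(1-r^2)mr^{m-1}$ at a fixed radius, and at the radii $r_m=(1-\tfrac1m)^{1/2}$. Two observations drive the argument. First, for any fixed $r\in(0,1)$, $U_m(r)=(1-r^2)m r^{m-1}\to 0$ as $m\to\infty$ (exponential decay $r^{m-1}$ beats the linear factor $m$); so if I have already fixed $s_{p_j}=s(n',i')$ for $j\le k$, then for all sufficiently large $m$ we get $U_m(r_{s(n',i')})<2^{-(n+n')}$ for each of those finitely many earlier pairs, where $n$ is the level of the pair $p_{k+1}$ I am about to define — this handles the requirement ``$U_{s(n,i)}(r_{s(n',i')})$ small'' when the radius is old and the exponent is new. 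Second, I need the reverse mixed terms $U_{s(n',i')}(r_{s(n,i)})$ to be small when $r_{s(n,i)}$ is the new radius; since $r_m\to 1$, and for fixed exponent $\ell$ the function $U_\ell(r)=(1-r^2)\ell r^{\ell-1}\to 0$ as $r\to 1^-$, for each already-chosen exponent $\ell=s(n',i')$ I can make $U_\ell(r_{s(n,i)})<2^{-(n+n')}$ by taking $s(n,i)$ large enough that $r_{s(n,i)}$ is close enough to $1$. So both families of cross terms involving a previously chosen pair and the new pair are controlled by taking the new exponent large. The genuinely new ``diagonal-adjacent'' condition is $U_{s(n,i)}(r_{s(n,i)})<?$ — but note (iii) only quantifies over $(n,i)\ne(n',i')$, so the pair with itself is excluded; thus no such term appears, and we may finish.

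So the recursion is: having chosen $s_{p_1},\dots,s_{p_k}$, let $p_{k+1}=(n,i)$ and choose $s_{p_{k+1}}=M$ to be any integer larger than (a) $s_{p_k}$, (b) twice the predecessor in the $i$-th chain if $n\ge 2$, (c) large enough that $U_M(r_{s(n',i')})<2^{-(n+n')}$ for every earlier pair $(n',i')$ (possible by the first observation, finitely many constraints), and (d) large enough that $U_{s(n',i')}(r_M)<2^{-(n+n')}$ for every earlier pair $(n',i')$ (possible by the second observation, again finitely many constraints, using that $r_M\to1$ as $M\to\infty$ and $M$ can be chosen to enforce $r_M\ge$ any prescribed threshold $<1$). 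Each of (a)–(d) is a finite set of lower bounds on $M$, so a valid $M$ exists. By construction every instance of (iii) with $(n,i)\ne(n',i')$ is verified at the stage at which the later of the two pairs is chosen, and (i), (ii) hold by (a),(b); this completes the construction.

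The main obstacle is purely bookkeeping: being careful that when the new pair is $(n,i)$, both orientations of the mixed condition in (iii) — namely $U_{s(n,i)}(r_{s(n',i')})$ and $U_{s(n',i')}(r_{s(n,i)})$ for earlier $(n',i')$ — are covered, and that both are indeed forced only by ``$s(n,i)$ large.'' This hinges on the two limits $U_m(r)\to 0$ ($m\to\infty$, $r$ fixed) and $U_\ell(r)\to 0$ ($r\to 1^-$, $\ell$ fixed), both of which are immediate from the explicit formula for $U_m$; no deep estimate is needed. One should also record that these choices are compatible with the observation $U_{s_n}(r_{s_n})\to e^{-1/2}$ mentioned before the lemma, which is consistent since that ``diagonal'' quantity is never required to be small by (iii).
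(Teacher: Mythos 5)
Your proposal is correct and follows essentially the same route as the paper: an induction along the lexicographic order in which each new exponent is chosen large enough to satisfy the finitely many constraints coming from the two limits $U_m(r)\to 0$ as $m\to\infty$ (for fixed $r$) and $U_\ell(r)\to 0$ as $r\to 1^-$ (for fixed $\ell$), with (i) and (ii) handled by additional finite lower bounds. The bookkeeping of both orientations of the mixed condition in (iii) matches the paper's treatment.
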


\noindent The proof of this lemma will be given at the end of this section. We may proceed to prove Theorem 1.3. 

%
%
%
%

\textit{Proof of Theorem 1.3.}

\vspace{2mm}

 We consider a sequence  $\{ s(n,i)\}_{1 \leq i \leq n < \infty} \sbs \N$, as constructed using Lemma 2.1. We define a collection of functions as follows:

\begin{gather}
f_i(z) = 1 + \sum_{n=i}^{\infty}z^{s(n,i)} \,\,, z\in \D, \, \,1 \leq i < \infty.
\end{gather}

\noindent Note that condition \textit{(ii)} of Lemma 2.1 states that for each $i$, the function $f_i$ is lacunary. Moreover, Proposition 1.1 guarantees that these functions all belong to the Bloch space as their Taylor coefficients are bounded. For each $N \in \N$ we define the invariant subspace $E_N = \bigvee_{1 \leq i \leq N} [f_i] \sbs \BB$. For $N = \infty$ we define $E_{\infty} =\bigvee_{1 \leq i < \infty} [f_i] \sbs \BB $ and our goal is to show that $\text{ind}(E_N) = N$. According to Lemma 1.2, it suffices to show that for every $ 0 \leq M < \infty $ and for every $ 1 \leq i_0 < \infty$ there exists some $c_{i_0} > 0$ such that for every $1 \leq i_1 < i_2 < \cdots < i_M $, with $i_j \neq i_0$ for $j \neq 0$, and for every polynomials $p_0, p_1, \ldots, p_M$, we have:

\begin{gather}
c_{i_0}|p_0(0)| = c_{i_0}|f_{i_0}(0)p_0(0)| \leq \bigg\Vert f_{i_0}p_0 + \sum_{j=1}^M f_{i_j}p_j \bigg\Vert_{\BB}.
\end{gather}

\noindent To that end, fix $ 0 \leq M < \infty $, and consider $1 \leq i_0 < \infty$ and $1 \leq i_1 < i_2 < \cdots < i_M $ satisfying $i_j \neq i_0$ for $j \neq 0$ as well as polynomials $p_0,p_1,\ldots,p_M$. To simplify notation we denote $U_{n,i} := U_{s(n,i)}$ and $r_{n,i} := r_{s(n,i)}$. For $n \geq i_0$ we have that:

\begin{align}
\bigg\Vert f_{i_0}p_0 + \sum_{j=1}^Mf_{i_j}p_j \bigg\Vert_{\BB} &\geq \sup_{|z| = r_{n,i_0}}(1-|z|^2)|(f_{i_0}p_0 + \sum_{j=1}^Mf_{i_j}p_j)'| \geq \notag\\
&\sup_{|z|=r_{n,i_0}}(1-|z|^2)|p_0(z)s(n,i_0)z^{s(n,i_0)-1}|  \\
&- \sup_{|z|=r_{n,i_0}}(1-|z|^2)|p_0(z) \sum_{\stackrel{k \geq i_0}{k \neq n}}s(k,i_0)z^{s(k,i_0)-1}| \\
&- \sup_{|z|=r_{n,i_0}}(1-|z|^2)|p'_0(z)f_{i_0}(z)| \\
&- \sum_{j=1}^M\sup_{|z|=r_{n,i_0}}(1-|z|^2)|p_j(z)f'_{i_j}(z)| \\
&- \sum_{j=1}^M\sup_{|z|=r_{n,i_0}}(1-|z|^2)|p'_j(z)f_{i_j}(z)|.
\end{align}

\noindent For (5) we have that:

\begin{equation}
\sup_{|z|=r_{n,i_0}}(1-|z|^2)|p_0(z)s(n,i_0)z^{s(n,i_0)-1}| = U_{n,i_0}(r_{n,i_0}) \cdot \sup_{|z| = r_{n,i_0}}|p_0(z)|.
\end{equation}

\noindent Note that $U_{n,i_0}(r_{n,i_0}) \ra \frac{1}{\sqrt{e}}$, as $n \ra \infty$. The maximum principle guarantees  that \\ $\sup_{|z| = r_{n,i_0}}|p_0(z)| \ra \no{p_0}_{\infty}$, as $n \ra \infty$. For (6) we have that:

\begin{equation}
\sup_{|z|=r_{n,i_0}}(1-|z|^2)|p_0(z) \sum_{k \neq n}s(k,i_0)z^{s(k,i_0)-1}| \leq \no{p_0}_{\infty} \cdot \sum_{k \neq n} U_{k,i_0}(r_{n,i_0}) \leq \no{p_0}_{\infty} \cdot \sum_{k \neq n}\frac{1}{2^{k+n}},
\end{equation}

\noindent where we used property \textit{(iii)} of Lemma 2.1. The quantities in (7) and (9) can be treated alike. If $0 \leq j \leq M$, then:

\begin{equation}
\sup_{|z|=r_{n,i_0}}(1-|z|^2)|p'_j(z)f_{i_j}(z)| \leq \no{p'_j}_{\infty} \cdot \sup_{|z|=r_{n,i_0}}(1-|z|^2)|f_{i_j}(z)|.
\end{equation}

\noindent Since functions in the Bloch space grow at most logarithmically, we obtain that \\ $\sup_{|z|=r_{n,i_0}}(1-|z|^2)|f_{i_j}(z)| \stackrel{n \ra \infty}{\longrightarrow} 0$. 

\newpage

Finally, for (8) we have that:

\begin{gather}
\sum_{j=1}^M\sup_{|z|=r_{n,i_0}}(1-|z|^2)|p_j(z)f'_{i_j}(z)| \leq \sum_{j=1}^M \no{p_j}_{\infty} \cdot \sup_{|z|=r_{n,i_0}}(1-|z|^2)|f'_{i_j}(z)|  \notag \\
\leq \sum_{j=1}^M \no{p_j}_{\infty} \cdot \sum_{k = i_j}^{\infty}U_{k,i_j}(r_{n,i_0}) \leq \sum_{j=1}^M \no{p_j}_{\infty} \cdot \sum_{k = i_j}^{\infty} \frac{1}{2^{k+n}}.
\end{gather}

\noindent where we used Lemma 2.1 once again. By substituting (10)-(13) into (5)-(9), we get that:

\begin{gather*}
U_{n,i_0}(r_{n,i_0})  \cdot \sup_{|z| = r_{n,i_0}}|p_0(z)| \leq \bigg\Vert f_{i_0}p_0 + \sum_{j=1}^Mf_{i_j}p_j \bigg\Vert_{\BB} + \no{p_0}_{\infty} \cdot \sum_{k \neq n}\frac{1}{2^{k+n}} \\ + \sum_{j=0}^M \no{p'_j}_{\infty} \cdot \sup_{|z|=r_{n,i_0}}(1-|z|^2)|f_{i_j}(z)| + \sum_{j=1}^M \no{p_j}_{\infty} \cdot \sum_{k = i_j}^{\infty} \frac{1}{2^{k+n}} \\\ \leq \bigg\Vert f_{i_0}p_0 + \sum_{j=1}^Mf_{i_j}p_j \bigg\Vert_{\BB} + \no{p_0}_{\infty} \cdot \frac{1}{2^n} + \sum_{j=0}^M \no{p'_j}_{\infty} \cdot \sup_{|z|=r_{n,i_0}}(1-|z|^2)|f_{i_j}(z)| + \frac{1}{2^n} \cdot \sum_{j=1}^M \no{p_j}_{\infty}.
\end{gather*}

\noindent By letting $n \ra \infty$ we get that:

\begin{gather*}
\frac{1}{\sqrt{e}} \cdot \no{p_0}_{\infty} \leq \bigg\Vert f_{i_0}p_0 + \sum_{j=1}^Mf_{i_j}p_j \bigg\Vert_{\BB} ,
\end{gather*}

\noindent and hence:

\begin{gather}
\frac{1}{\sqrt{e}} \cdot |p_0(0)| \leq \frac{1}{\sqrt{e}} \cdot \no{p_0}_{\infty} \leq \bigg\Vert f_{i_0}p_0 + \sum_{j=1}^Mf_{i_j}p_j \bigg\Vert_{\BB}.
\end{gather}

\noindent And we obtained inequality (4) with $c_{i_0} = \frac{1}{\sqrt{e}}. \quad\quad\quad\quad\quad\quad\quad\quad\quad\quad\quad\quad\quad\quad\quad\quad\quad\quad\quad\quad\quad\quad\, \blacksquare$

%
%
%
%

\vspace{5mm}

The above proof may serve as a model to prove Theorem 1.4. The challenge that arises is to define a continuum of functions with the properties described above, instead of countably many.

%
%
%
%

\textit{Proof of Theorem 1.4.}

\vspace{2mm}

We apply Lemma 2.1 and set $s_n = s(n,1)$, $n \geq 1$. The properties of the sequence may be summarized as follows:

\begin{enumerate}[label = (\roman*')]
\item $1 < s_1 < s_2 < \cdots $.

\vspace{1mm}

\item $\frac{s_{n+1}}{s_n} \geq 2 $, for all $n \in \N$.

\vspace{1mm}

\item For every $n \neq n'$ we have $U_{s_n}(r_{s_{n'}}) < \frac{1}{2^{n+n'}}$.
\end{enumerate}

\noindent To define an invariant subspace, we are in need of a lemma that we will use without proof, as a detailed proof can be found in Lemma 3.2, \cite{Niwa2003}.

\begin{lemma}
There exists a family $\{N_{\alpha} \sbs \N \, : \, \alpha \in [0,1] \}$ such that for every $M \in \N$ and every finitely many indices $\alpha_0,\alpha_1, \ldots, \alpha_M \in [0,1]$ with $\alpha_0 \neq \alpha_i$ for $1 \leq i  \leq M$ we have:

\begin{gather}
\text{card}\bigg( N_{\alpha_0} \backslash \bigcup_{i=1}^M N_{\alpha_i} \bigg) = \infty.
\end{gather}

\end{lemma}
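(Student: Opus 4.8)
The plan is to reduce the statement to the classical fact that $\N$ carries a \emph{strongly almost disjoint} family of cardinality the continuum: a family of infinite subsets, any two of which meet in a finite set. First I would check that such a family already does the job. Indeed, suppose $\{N_\alpha : \alpha \in [0,1]\}$ consists of infinite subsets of $\N$ with $N_\alpha \cap N_\beta$ finite whenever $\alpha \neq \beta$. Given $\alpha_0,\alpha_1,\dots,\alpha_M$ with $\alpha_0 \neq \alpha_i$ for $1 \leq i \leq M$, the set $N_{\alpha_0} \cap \bigcup_{i=1}^{M} N_{\alpha_i} = \bigcup_{i=1}^{M}\big(N_{\alpha_0}\cap N_{\alpha_i}\big)$ is a finite union of finite sets, hence finite; therefore $N_{\alpha_0} \backslash \bigcup_{i=1}^{M} N_{\alpha_i}$ is obtained from the infinite set $N_{\alpha_0}$ by deleting finitely many elements, so it is infinite, which is exactly $(16)$.

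Next I would produce such a family explicitly. Identify $\N$ with the countable set $T = \bigcup_{k\geq 1}\{0,1\}^{k}$ of finite binary words via a bijection; it suffices to build the family inside $T$. To each $\alpha \in [0,1]$ attach an infinite binary expansion $(\eps_k(\alpha))_{k\geq 1} \in \{0,1\}^{\N}$ with $\alpha = \sum_{k\geq 1}\eps_k(\alpha)2^{-k}$, fixing a convention (for instance always the non-terminating expansion for dyadic rationals) so that $\alpha \mapsto (\eps_k(\alpha))_{k}$ is injective, and set
\[
N_\alpha = \big\{ \big(\eps_1(\alpha),\dots,\eps_k(\alpha)\big) : k \geq 1 \big\} \sbs T .
\]
Each $N_\alpha$ is infinite, since it contains exactly one word of each length. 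If $\alpha \neq \beta$, the sequences $(\eps_k(\alpha))_k$ and $(\eps_k(\beta))_k$ first differ at some position $k_0$, and every word lying in $N_\alpha \cap N_\beta$ is a common initial segment of both, hence has length strictly less than $k_0$; thus $|N_\alpha \cap N_\beta| \leq k_0 - 1 < \infty$, which is the strong almost disjointness needed above.

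I do not expect a genuine obstacle here: the reduction in the first step is elementary, and the construction is a routine "branches of the binary tree" argument. The only point requiring a little care is the treatment of dyadic rationals, where one must commit to a single expansion in order to keep $\alpha \mapsto (\eps_k(\alpha))_k$ injective; this is handled by the stated convention. (An alternative, equivalent construction — the one referenced in \cite{Niwa2003} — is to identify $\N$ with $\Q \cap [0,1]$ and let $N_\alpha$ be the range of an injective sequence of rationals converging to $\alpha$; two such ranges meet in a finite set because their elements accumulate only at the respective distinct limits.)
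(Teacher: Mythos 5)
Your proof is correct. Note that the paper itself does not prove Lemma 2.2; it simply cites Lemma 3.2 of \cite{Niwa2003}, where the family is built exactly as in your parenthetical remark (ranges of injective sequences of rationals converging to each $\alpha$). Your reduction to a strongly almost disjoint family of cardinality the continuum, and the binary-tree construction of such a family, are both standard and complete; the only delicate point, the choice of a single binary expansion for dyadic rationals, is handled by your stated convention, so the argument stands as a self-contained proof of the lemma.
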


\noindent We consider the family $\{N_{\alpha}\}_{\alpha \in [0,1]}$ and we define for each $\alpha \in [0,1]$ a function:

\begin{gather}
f_{\alpha}(z) = 1 + \sum_{n\in N_{\alpha}}z^{s_n} \,\,, z\in \D.
\end{gather}

\noindent Once again, these functions are lacunary, and they belong to the Bloch space. The invariant subspace is defined similarly, $E = \bigvee_{\alpha \in [0,1]}[f_{\alpha}]$ and we aim to show that $\text{ind}(E) = \dim(E/zE) = \text{card}([0,1])$. By definition, it suffices to show that for any $\alpha_0 \in [0,1]$ we have:

\begin{gather*}
f_{\alpha_0} + zE \notin \overline{\text{span}} \bigg\{ f_{\alpha} + zE \, :\, \alpha \in [0,1] \, , \alpha \neq \alpha_0 \bigg\},
\end{gather*}

\noindent where the closure is in the quotient topology of $E/zE$, and $f_{\alpha}+ zE$ denotes the equivalence class of $f_{\alpha}$ in the quotient space. Let $\alpha_0 \in [0,1]$. It is sufficient to find some constant $c >0$ such that:

\begin{gather}
\no{f_{\alpha_0} + zE + u }_{E/zE} \geq c \,\,,\\
\text{for every}\,\, u \in \text{span} \bigg\{ f_{\alpha} + zE \, : \, \alpha \in [0,1] \, , \alpha \neq \alpha_0 \bigg\} \notag
\end{gather}

\noindent To that end, consider any finite number of indices $\alpha_1, \ldots, \alpha_M \in [0,1]$ with $\alpha_i \neq \alpha_0$, and any polynomials $q,p_1,\ldots,p_M$. Then we need to show that 

\begin{gather*}
\no{f_{ \alpha_0}(1 + z \cdot q) + \sum_{i=1}^M p_i f_{ \alpha_i}}_{\BB} \geq c_0.
\end{gather*}

\noindent Set $p_0 = 1 + zq$. From Lemma 2.2 we have that there exists some increasing sequence $k_n \sbs N_{\alpha_0}$ such that $k_n \notin \bigcup_{i=1}^M N_{\alpha_i}$. Starting once again as in Theorem 1.3 we obtain:

\begin{align*}
\bigg\Vert f_{\alpha_0}p_0 + \sum_{j=1}^M f_{\alpha_j}p_j \bigg\Vert_{\BB} &\geq \sup_{|z| = r_{k_n}}(1-|z|^2)|(f_{\alpha_0}p_0 + \sum_{j=1}^M f_{\alpha_j}p_j)'| \geq \notag\\
&\sup_{|z|=r_{k_n}}(1-|z|^2)|p_0(z)s_{k_n}z^{s_{k_n}-1}|  \\
&- \sup_{|z|=r_{k_n}}(1-|z|^2)|p_0(z) \sum_{m \neq k_n}s_mz^{s_m-1}| \\
&- \sup_{|z|=r_{k_n}}(1-|z|^2)|p'_0(z)f_{\alpha_0}(z)| \\
&- \sum_{j=1}^M\sup_{|z|=r_{k_n}}(1-|z|^2)|p_j(z)f'_{\alpha_j}(z)| \\
&- \sum_{j=1}^M\sup_{|z|=r_{k_n}}(1-|z|^2)|p'_j(z)f_{\alpha_j}(z)|.
\end{align*}

\noindent Since $k_n \neq m $ for all indices $m$ appearing in the sums above, condition \textit{(iii')} is satisfied, and hence we may replicate the argument of Theorem 1.3. Therefore by letting $k_n \ra \infty$ we obtain:

\begin{gather*}
\frac{1}{\sqrt{e}} \cdot \no{p_0}_{\infty} \leq \bigg\Vert f_{\alpha_0}p_0 + \sum_{j=1}^M f_{\alpha_j}p_j \bigg\Vert_{\BB},
\end{gather*}

\noindent and since $\no{p_0}_{\infty} = \no{1+zq}_{\infty} \geq 1$ we  arrive at:

\begin{gather*}
\frac{1}{\sqrt{e}} \leq \bigg\Vert f_{\alpha_0}p_0 + \sum_{j=1}^M f_{\alpha_j}p_j \bigg\Vert_{\BB},
\end{gather*}

\noindent which is inequality (17) with $c=\frac{1}{\sqrt{e}}. \quad\quad \quad\quad\quad\quad\quad\quad\quad\quad\quad\quad\quad\quad\quad\quad\quad\quad\quad\quad\quad\quad\quad\quad\quad\quad \blacksquare$

%
%
%
%

\vspace{5mm}

We finish this section by proving Lemma 2.1.

%
%
%
%

\noindent \textit{Proof of Lemma 2.1.}

\vspace{2mm}

We will construct the sequence $s(n,i)$ inductively. We may define $s(1,1) > 1$ as we like. To define the integer $s(2,1)$, we take it to be $s(2,1) \geq 2 s(1,1)$ so that it satisfies conditions \textit{(i)} and  \textit{(ii)}. For condition \textit{(iii)}, we notice that 

\begin{align}
\text{For every} \,\, n\in \N, \,\, U_n(r) \ra 0, \,\, \text{as} \,\,r \ra 1^- \,\, \text{and} \\
\text{for every} \,\, r \in (0,1), \,\, U_n(r) \ra 0, \,\, \text{as} \,\, n \ra \infty.
\end{align}

\noindent This means we can choose $s = s(2,1)$ large enough so that:

\begin{align*}
U_{2,1}(r_{1,1}) < \frac{1}{2^{2+1}} \,\, \text{and} \,\,
U_{1,1}(r_{2,1}) < \frac{1}{2^{2+1}}.
\end{align*}

\noindent To continue, assume that we have already defined all terms up to $s(n,i)$ for some $(n,i)$. The next term has the form $s(n+1,1)$, if $n=i$, and has the form $s(n,i+1)$ if $n>i$. Without loss of generality, we may assume the first case. First choose $s(n+1,1)$ large enough so that $s(n+1,1) > s(n,n)$ and $s(n+1,1) \geq 2 s(n,1)$. That way conditions \textit{(i)} and \textit{(ii)} are taken care of. For the last one we take $s(n+1,1)$ additionally as large as to have:

\begin{align*}
U_{n+1,1}(r_{m,i}) < \frac{1}{2^{n+1+m}} \,\, \text{for all} \,\, 1 \leq m \leq n \,\, \text{and} \,\, 1 \leq i \leq m \\
U_{m,i}(r_{n+1,1}) < \frac{1}{2^{n+1+m}} \,\, \text{for all} \,\, 1 \leq m \leq n \,\, \text{and}\,\, 1 \leq i \leq m.
\end{align*}

\noindent This is possible because of (25), (26) and because we have only finitely many predefined terms for which we need to verify the inequalities. By the inductive hypothesis we can construct the whole sequence $s(n,i)$ satisfying all properties \textit{(i)},\textit{(ii)}
and \textit{(iii)}. The proof is complete.$\quad\quad\quad\quad\quad\quad\quad\quad\quad \blacksquare$

%
%
%
%

\begin{remark}
Lemma 2.1 may be adapted for the spaces $\BB_{\alpha}$, $0<\alpha < 1$, which are Bloch type spaces with norm $|f(0)|+\sup_{|z|<1}(1-|z|^2)^{\alpha}|f'(z)|$. One can then prove Theorem 1.3 and 1.4 for those spaces.
\end{remark}

\section{Invariant subspaces in the little Bloch space}

In this section we will construct invariant subspaces in the little Bloch space of arbitrary, but countable, index. The section is split into two parts. We begin by stating some preliminary results, that will be used in the construction. Then we pass to the construction of the functions generating the invariant subspaces and prove that it has the correct index. The proof of the desired index is achieved with the aid of Lemma 1.2. We denote by $m$ the normalized Lebesgue measure of $[0,2\pi]$ or $\T = \{z \in \C \, : \, |z| = 1\}$.

\subsection{Preliminary results}

\noindent The first result is known as Makarov's inequality and it will be useful to us since it permits us to pass from estimates involving Bloch norms, to estimates involving $L^p$-means . Following that, is an exponential version of it, which follows by a simple calculation.

\begin{theorem}{(Makarov's Inequality)}
Let $g \in \BB$. Then for every $0 \leq r < 1$ and every $n \in \N$ the following inequality holds:

\begin{equation*}
\bigg( \intT{|g(r\zeta)|^{2n}} \bigg)^{\frac{1}{2n}} \leq \no{g}_{\BB} \Bigl( 1 + (n!)^{\frac{1}{2n}} \sqrt{\log \frac{1}{1-r}}\Bigr).
\end{equation*}

\end{theorem}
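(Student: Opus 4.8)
The plan is to deduce Makarov's inequality from a pointwise growth estimate on the Taylor coefficients of a Bloch function together with a combinatorial bound obtained by expanding the $2n$-th power. First I would reduce to the case $g(0)=0$: writing $g = g(0) + (g-g(0))$ and noting $\no{g-g(0)}_\BB = \no{g}_\BB - |g(0)| \le \no{g}_\BB$, while $|g(0)| \le \no{g}_\BB$, a triangle inequality in $L^{2n}(\T)$ reduces matters to functions vanishing at the origin, at the cost of replacing the bound by something of the same shape; alternatively one simply carries the $|g(0)|$ term along. The real content is for $g$ with $g' \in H^\infty$-type control, i.e. $(1-|z|^2)|g'(z)| \le \no{g}_\BB$.

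The heart of the argument is the standard estimate for the Taylor coefficients: if $g(z) = \sum_{k\ge 1} b_k z^k$ with $\no{g}_\BB \le 1$ and $g(0)=0$, then $|b_k| \le C$ for a universal constant (in fact one can get $|b_k|\lesssim 1$ from $(1-|z|^2)|g'(z)|\le 1$ via Cauchy estimates on $|z| = 1 - 1/k$), and more importantly the partial-sum/area-type bound that controls $\intT{|g(r\zeta)|^{2n}}$. The cleanest route: compute $\intT{|g(r\zeta)|^{2n}}$ by writing $|g(r\zeta)|^{2n} = g(r\zeta)^n \overline{g(r\zeta)^n}$ and integrating; by orthogonality of $\{\zeta^k\}$ on $\T$ this equals $\sum_k |c_k(n)|^2 r^{2k}$ where $c_k(n)$ are the Taylor coefficients of $g^n$. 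Then one bounds $\sum_k |c_k(n)|^2 r^{2k}$ by grouping coefficients dyadically and using the elementary fact that a Bloch function with unit norm satisfies $\sum_{2^j \le k < 2^{j+1}} |b_k|^2 \lesssim 1$ (a consequence of the Littlewood–Paley / Hardy–Stein identity $\int_\T |g(r\zeta)|^2\,dm \asymp \int_\D |g'(z)|^2 \log\frac{1}{|z|}\,dA + \dots$, or just Parseval plus the $(1-r)|g'|\le 1$ bound). Iterating this estimate $n$ times, tracking how the logarithmic factor $\log\frac{1}{1-r}$ and the factorial $n!$ enter at each stage, yields the claimed bound $\no{g}_\BB\bigl(1 + (n!)^{1/2n}\sqrt{\log\frac{1}{1-r}}\bigr)$. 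I would structure the induction on $n$: assuming the bound for $g^{n-1}$, multiply by $g$, use Cauchy–Schwarz and the coefficient estimate to pick up one more factor of $\sqrt{\log\frac{1}{1-r}}$ and one more factor from $n$.

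The main obstacle I anticipate is getting the constants exactly in the stated form — in particular producing the clean factor $(n!)^{1/2n}$ rather than some cruder bound like $C^n \sqrt{n}$ or $n^{n}$. This requires being careful in the inductive step about which combinatorial quantity multiplies the logarithm: the $n!$ arises from counting the ordered ways the $n$ factors of $g$ can "share" the frequency support, and one must use the sharp form of the dyadic coefficient bound rather than a lossy one. A secondary technical point is justifying the interchange of summation and integration and the convergence of the series $\sum_k |c_k(n)|^2 r^{2k}$ for $r<1$, which is routine since everything is absolutely convergent for fixed $r<1$. I would also remark that since the paper only needs this to pass from Bloch-norm estimates to $L^{2n}$ estimates (and then, via the announced "exponential version", to exponential-integrability statements feeding into the Salem–Zygmund machinery), one could alternatively cite Makarov's original paper \cite{Makarov1985} or Pommerenke \cite{Pommerenke1992}; but giving the short self-contained coefficient-counting proof is preferable here.
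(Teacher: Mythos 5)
There is a genuine gap here, and it is in the inductive step, which is the whole content of the theorem. (For context: the paper does not prove this statement at all — it quotes it from Makarov's paper \cite{Makarov1985}, see also Theorem 8.9 of \cite{Pommerenke1992} — so the comparison is with the classical argument, which is an induction on $n$ based on the Hardy--Stein/Green identity
\begin{equation*}
\intT{|g(r\zeta)|^{2n}} \;=\; |g(0)|^{2n} \;+\; \frac{1}{2\pi}\int_{|z|<r}\log\frac{r}{|z|}\,\Delta\bigl(|g|^{2n}\bigr)(z)\,dA(z),
\qquad \Delta\bigl(|g|^{2n}\bigr)=4n^2|g|^{2n-2}|g'|^2,
\end{equation*}
into which one inserts $(1-|z|^2)|g'(z)|\le\no{g}_{\BB}$; one radial integration of $(1-s^2)^{-1}\bigl(\log\frac{1}{1-s}\bigr)^{n-1}$ produces exactly one new factor of $\log\frac{1}{1-r}$ and one factor of order $n$, which is where $n!\bigl(\log\frac{1}{1-r}\bigr)^n$, and hence $(n!)^{1/2n}\sqrt{\log\frac{1}{1-r}}$, come from.) Your proposed step from $n-1$ to $n$ — ``multiply by $g$, use Cauchy--Schwarz and the coefficient estimate'' — never uses the derivative bound in this quantitative way, and it cannot deliver the $\sqrt{\log}$ growth: estimating $\int|g|^{2n}\le \sup_{|z|=r}|g|^{2}\int|g|^{2n-2}$ (or any Cauchy--Schwarz variant that only sees the pointwise bound $|g(r\zeta)|\lesssim\no{g}_{\BB}\log\frac{1}{1-r}$) loses a full $\log\frac{1}{1-r}$ per step instead of the needed $\sqrt{\log\frac{1}{1-r}}$, i.e.\ it proves $L^{2n}$-means of order $\log\frac{1}{1-r}$, which is precisely the trivial bound the theorem improves upon. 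The Parseval reformulation $\intT{|g(r\zeta)|^{2n}}=\sum_k|c_k(n)|^2r^{2k}$ is correct but merely restates the problem; the asserted combinatorial count producing exactly $n!$ from $n$-fold frequency sharing is the missing proof, not a routine verification.

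Separately, even if the dyadic-block/moment-counting route is pushed through (it can be, in the spirit of moment estimates for lacunary series), it cannot yield the inequality \emph{as stated}, because every block estimate for a general Bloch function carries a universal constant strictly larger than $1$: already $|b_k|\le\no{g}_{\BB}$ is false, since $\no{z^k}_{\BB}\to 2/e<1$ while the coefficient is $1$, and a fortiori $\sum_{2^j\le k<2^{j+1}}|b_k|^2\le\no{g}_{\BB}^2$ fails without a constant $C>1$. Iterating such bounds over an $n$-fold product gives $C^n n!\bigl(\log\frac{1}{1-r}\bigr)^n$ inside the integral, and the factor $C^{1/2}$ survives the $2n$-th root, so one proves at best the inequality with an extra absolute constant in front of $\sqrt{\log\frac{1}{1-r}}$. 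That is a weaker statement than the one in the paper, and the clean constants are not ornamental here: they are exactly what produce the constant $8$ and the bound $2$ in Proposition 3.2, which are then hard-wired into the numerology of Section 3 (the choice $\delta_j=2^9\sqrt{c/j}$, the exponents in Proposition 3.7, the bound $X\le 2$, etc.). So either reproduce the Hardy--Stein induction with its precise constants, or do what the paper does and cite \cite{Makarov1985} and \cite{Pommerenke1992}; the sketch as written does not prove the stated theorem.
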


\noindent A proof of the above can be found in Makarov's original paper \cite{Makarov1985}. Theorem 8.9 in \cite{Pommerenke1992} provides a refined version involving the above numerical constants.

\begin{proposition}{(Makarov's Inequality; exponential form)}
Let $g \in \BB$, with $\no{g}_{\BB} \leq 1 $. Then for every $1-\frac{1}{e}\leq r<1$ we have:

\begin{gather*}
\int_{r\T}\exp\bigg\{\frac{|g(\zeta)|^2}{8\log\frac{1}{1-r}} \bigg\} \,dm(\zeta) \leq 2
\end{gather*}
\end{proposition}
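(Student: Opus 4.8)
The plan is to expand the exponential into its Taylor series, integrate term by term (legitimate by Tonelli, since every summand is nonnegative), and estimate each moment $\intT{|g(r\zeta)|^{2n}}$ using Makarov's Inequality (Theorem 3.1). The only structural point to keep in view is the normalization of $m$, so that the constant term of the series contributes exactly $1$.

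First I would set $L = \log\frac{1}{1-r}$ and record that the hypothesis $1-\tfrac1e \le r < 1$ forces $L \ge 1$. Then
\begin{gather*}
\int_{r\T}\exp\Bigl\{\frac{|g(\zeta)|^2}{8L}\Bigr\}\,dm(\zeta) = \sum_{n=0}^{\infty}\frac{1}{n!\,(8L)^n}\intT{|g(r\zeta)|^{2n}},
\end{gather*}
where the $n=0$ term equals $1$. For $n \ge 1$, since $\no{g}_{\BB}\le 1$, Theorem 3.1 yields
\begin{gather*}
\intT{|g(r\zeta)|^{2n}} \le \bigl(1 + (n!)^{1/(2n)}\sqrt{L}\bigr)^{2n}.
\end{gather*}
Because $L\ge 1$ and $n!\ge 1$ we have $1 \le (n!)^{1/(2n)}\sqrt{L}$, hence $1 + (n!)^{1/(2n)}\sqrt{L} \le 2(n!)^{1/(2n)}\sqrt{L}$, and therefore
\begin{gather*}
\intT{|g(r\zeta)|^{2n}} \le \bigl(2(n!)^{1/(2n)}\sqrt{L}\bigr)^{2n} = 4^{n}\,L^{n}\,n!.
\end{gather*}

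Dividing by $n!\,(8L)^n$ shows the $n$-th term of the series is at most $4^n/8^n = 2^{-n}$, so summing the geometric tail gives
\begin{gather*}
\int_{r\T}\exp\Bigl\{\frac{|g(\zeta)|^2}{8L}\Bigr\}\,dm(\zeta) \le 1 + \sum_{n=1}^{\infty}2^{-n} = 2,
\end{gather*}
which is the assertion. There is essentially no obstacle here: the whole argument is the term-by-term expansion plus the elementary inequality $L\ge 1$ used to absorb the "$1+$" in Makarov's bound; the minor care needed is only in justifying the interchange of sum and integral (nonnegativity and Tonelli) and in observing that the numerical constant $8$ in the exponent is exactly what makes the resulting series geometric with ratio $1/2$.
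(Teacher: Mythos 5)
Your proof is correct and is essentially the same as the paper's: expand the exponential, apply Makarov's inequality to each moment, use $\log\frac{1}{1-r}\ge 1$ to absorb the ``$1+$'' into a factor of $2$, and sum the resulting geometric series with ratio $\tfrac12$. The only cosmetic difference is that you single out the $n=0$ term separately, while the paper includes it in the uniform bound $2^{-n}$; both give the constant $2$.
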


\begin{proof}
From Makarov's inequality for the function $g$ we obtain for every $n \in \N$:

\begin{gather*}
\int_{r\T}|g|^{2n}\,dm \leq \no{g}_{\BB}^{2n}\biggl( 1 + (n!)^{\frac{1}{2n}} \sqrt{\log\frac{1}{1-r}}\biggr)^{2n} 
\end{gather*}

When $r \geq 1 - \frac{1}{e}$ we get $1 + (n!)^{\frac{1}{2n}} \sqrt{\log\frac{1}{1-r}} \leq 2(n!)^{\frac{1}{2n}} \sqrt{\log\frac{1}{1-r}}$, and so:

\begin{gather*}
\int_{r\T}|g|^{2n}\,dm \leq 4^n n! \log^n \frac{1}{1-r}
\end{gather*}

Hence

\begin{gather*}
\int_{r\T} \bigg( \frac{|g|^2}{8\log\frac{1}{1-r}} \bigg)^n \frac{1}{n!} \,dm \leq \frac{1}{2^n}
\end{gather*}

Using the monotone convergence theorem and summing up all the integrals, we get the result.

\end{proof}

Theorem 3.3 is a classic result of Salem and Zygmund, as in their original paper \cite{Salem1947}. 

\begin{theorem}{(Salem-Zygmund)}
Let $F(z) = \sum_{k=1}^{\infty}c_kz^{n_k}$, $z \in \T$, be a lacunary power series, i.e. $\frac{n_{k+1}}{n_k} \geq q > 1 $ for some $q >1$ and all $k \in \N$. Let $C_N = \frac{1}{\sqrt{2}}(|c_1|^2 + \cdots + |c_N|^2)^{\frac{1}{2}}$ and let $R_N,I_N$ be the real and imaginary parts of the $N$-th partial sum respectively. If $C_N \ra +\infty$ and $\frac{c_N}{C_N} \ra 0$, then:

\begin{equation*}
m\bigg( \bigg\{ z\in \T \, : \, \frac{R_N(z)}{C_N} \leq x , \frac{I_N(z)}{C_N} \leq y \bigg\}\bigg) \ra \frac{1}{2\pi} \int_{-\infty}^x \int_{-\infty}^y e^{-\frac{1}{2}(t^2+s^2)}\,dt\,ds, \quad x,y \in \R.
\end{equation*}

\end{theorem}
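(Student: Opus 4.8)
The plan is to prove this classical central limit theorem by the method of characteristic functions, exploiting the quasi-independence of lacunary frequencies. First I would reduce the two-dimensional statement to a one-dimensional one: by the Cram\'er--Wold device and L\'evy's continuity theorem it suffices to prove that for every $(u,v)\in\R^2$,
\begin{equation*}
\int_{\T}\exp\!\Big(i\,\tfrac{uR_N(\zeta)+vI_N(\zeta)}{C_N}\Big)\,dm(\zeta)\;\longrightarrow\;\exp\!\Big(-\tfrac{u^2+v^2}{2}\Big).
\end{equation*}
Writing $\zeta=e^{i\theta}$ and $c_k=|c_k|e^{i\gamma_k}$, one has $\operatorname{Re}(c_k\zeta^{n_k})=|c_k|\cos(n_k\theta+\gamma_k)$ and $\operatorname{Im}(c_k\zeta^{n_k})=|c_k|\sin(n_k\theta+\gamma_k)$, so that $uR_N+vI_N=\lambda\,T_N(\theta)$ with $\lambda=\sqrt{u^2+v^2}$, $T_N(\theta)=\sum_{k=1}^N|c_k|\cos(n_k\theta+\beta_k)$ and $\beta_k=\gamma_k-\arg(u+iv)$. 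Since the $n_k$ are distinct one computes $\int_{\T}T_N^2\,dm=\tfrac12\sum_{k=1}^N|c_k|^2=C_N^2$ and $\int_{\T}R_NI_N\,dm=0$, so the limiting Gaussian has identity covariance matrix and density $\tfrac{1}{2\pi}e^{-(t^2+s^2)/2}$, exactly as stated. Everything thus reduces to the scalar claim $\int_{\T}\exp(i\lambda T_N/C_N)\,dm\to e^{-\lambda^2/2}$ for each fixed $\lambda$, under $C_N\to\infty$ and $|c_N|/C_N\to0$; note that the latter, together with the monotonicity of $C_N$, forces $\max_{k\le N}|c_k|/C_N\to0$, the Lindeberg-type smallness that we will use.

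For the scalar claim put $\mu_k=|c_k|/C_N$, so $\sum_{k=1}^N\mu_k^2=2$ and $\max_k\mu_k\to0$, and factor
\begin{equation*}
\exp\!\Big(i\lambda\tfrac{T_N(\theta)}{C_N}\Big)=\prod_{k=1}^N\exp\!\big(i\lambda\mu_k\cos(n_k\theta+\beta_k)\big).
\end{equation*}
The crux is to compare the integral over $\T$ of this product with the product of the individual integrals $\int_{\T}\exp(i\lambda\mu_k\cos(n_k\theta+\beta_k))\,dm=J_0(\lambda\mu_k)$, where $J_0$ is the Bessel function; once this is done, $J_0(\lambda\mu_k)=1-\tfrac14\lambda^2\mu_k^2+O(\mu_k^4)$ gives $\prod_{k=1}^N J_0(\lambda\mu_k)\to\exp(-\tfrac14\lambda^2\sum_k\mu_k^2)=e^{-\lambda^2/2}$, using $\max_k\mu_k\to0$ and $\sum_k\mu_k^2=2$. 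The comparison itself is carried out by expanding each factor in its Fourier series in $\theta$ (the harmonics of $\exp(i\lambda\mu_k\cos\phi)$ are $i^mJ_m(\lambda\mu_k)e^{im\phi}$, decaying rapidly in $m$): integration over $\T$ annihilates every term except those whose total frequency $\sum_j\varepsilon_j n_{k_j}$ vanishes, and the lacunarity $n_{k+1}/n_k\ge q>1$ sharply restricts these resonances, forcing the deviation from the full product to $0$. An equivalent route computes the moments $\int_{\T}(T_N/C_N)^{2m}\,dm$ directly and shows, by the same resonance count, that they converge to the Gaussian moments $(2m-1)!!$.

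The main obstacle is to make this approximate independence quantitative, with error bounds uniform in $\lambda$, when $q$ is only assumed to exceed $1$. If $q$ is large enough the comparison reduces to a clean telescoping estimate, since each factor is, up to a negligible tail, a trigonometric polynomial of small degree, whence the partial product over $k<K$ has degree $\lesssim n_1+\cdots+n_{K-1}<n_K$ and is therefore ``orthogonal'' to the oscillation contributed by the $K$-th factor. For general $q>1$ one must insert the standard Bernstein-type blocking: group the indices into long blocks $J_1,J_2,\dots$ separated by short buffer blocks carrying total $L^2$-mass $o(C_N^2)$ (so they may be discarded via Cauchy--Schwarz), arranged so that the lowest frequency of $J_{m+1}$ exceeds a prescribed large multiple of the highest frequency of $J_m$ while each block sum $Y_m=C_N^{-1}\sum_{k\in J_m}|c_k|\cos(n_k\theta+\beta_k)$ stays uniformly small in $L^2$; the $Y_m$ are then genuinely almost independent and a Lindeberg-type central limit theorem for $\sum_mY_m$ completes the argument. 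This resonance-counting and blocking analysis is the technical heart of the Salem--Zygmund theorem; the reduction to the scalar problem, the Bessel computation, and the return to the joint distribution function via the continuity theorem are routine.
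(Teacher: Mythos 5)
The paper does not prove Theorem 3.3 at all: it is quoted as a classical result and attributed to Salem and Zygmund's original paper \cite{Salem1947}, so there is no internal proof to compare you against. Judged on its own, your outline follows the genuine classical route: the reduction via L\'evy continuity/Cram\'er--Wold to the scalar statement $\int_{\T}\exp(i\lambda T_N/C_N)\,dm\to e^{-\lambda^2/2}$ is correct, the covariance computation ($\int_\T R_N^2\,dm=\int_\T I_N^2\,dm=C_N^2$, $\int_\T R_NI_N\,dm=0$) is correct, the observation that $c_N/C_N\to0$ plus monotonicity of $C_N$ gives $\max_{k\le N}|c_k|/C_N\to0$ is correct, and the Bessel computation $\prod_k J_0(\lambda\mu_k)\to e^{-\lambda^2/2}$ (using $\sum_k\mu_k^2=2$ and $\sum_k\mu_k^4\to0$) is correct.

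The genuine gap is that the decisive step --- showing $\int_{\T}\prod_k\exp(i\lambda\mu_k\cos(n_k\theta+\beta_k))\,dm$ is asymptotically the product of the individual integrals --- is only described, and this is precisely where the entire content of the theorem lies. Two points in your sketch would need real work. First, the ``large $q$'' telescoping claim is not immediate as stated: each factor $\exp(i\lambda\mu_k\cos(n_k\theta+\beta_k))$ has harmonics at \emph{all} integer multiples $mn_k$, so the partial product over $k<K$ is not a trigonometric polynomial of degree less than $n_K$; one must truncate the harmonic expansions (or replace $e^{ix}$ by a low-order expansion such as $(1+ix)e^{-x^2/2+O(|x|^3)}$) and control the accumulated truncation errors uniformly in $N$, which is where the hypothesis $\max_k\mu_k\to0$ must be used quantitatively. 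Second, for general $q>1$ nontrivial resonances $\sum_j\pm n_{k_j}=0$ genuinely occur, and your blocking step still requires a quantitative estimate of the dependence between blocks: the block sums $Y_m$ are not independent, so a ``Lindeberg-type CLT for $\sum_m Y_m$'' cannot simply be invoked --- one needs either an explicit characteristic-function factorization with error bounds (the resonance counting again, now between blocks) or a martingale-type argument. So the proposal is a sound high-level roadmap, consistent with how the theorem is actually proved, but as a proof it is incomplete at its technical core; since the paper itself only cites the result, nothing in the paper is affected either way.
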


\noindent We will need to work with the modulus of a complex function instead of the real and imaginary part. We thus compute the asymptotic distribution of it.

\begin{proposition}
Let $F_N$ be the partial sum of the series $F(z) = \sum_{k=1}^{\infty}c_kz^{n_k}$, $z \in \T$. Under the hypotheses of Theorem 3.3 we have that:

\begin{equation*}
m\bigg( \bigg\{ z \in \T \, : \, \frac{|F_N(z)|}{C_N} \leq x \bigg\}\bigg) \ra 1 - e^{-\frac{x^2}{2}}, \quad x \geq 0.
\end{equation*} 

\end{proposition}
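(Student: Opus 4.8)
The plan is to deduce the distribution of $|F_N|/C_N$ from the Salem--Zygmund theorem (Theorem 3.3) by viewing $(R_N/C_N, I_N/C_N)$ as a pair of random variables on $\T$ (with probability measure $m$) whose joint distribution function converges to that of a pair of independent centered Gaussians, each of variance $1$. Indeed, the normalization $C_N = \frac{1}{\sqrt 2}(|c_1|^2+\cdots+|c_N|^2)^{1/2}$ is chosen precisely so that the limiting covariance matrix is the identity: writing $F_N = R_N + iI_N$, one has $|F_N|^2 = R_N^2 + I_N^2$, so $|F_N|^2/C_N^2 = (R_N/C_N)^2 + (I_N/C_N)^2$ converges in distribution to $G_1^2 + G_2^2$ where $G_1, G_2$ are i.i.d.\ standard normal, i.e.\ to a $\chi^2$ random variable with two degrees of freedom. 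Since $\chi^2_2$ has density $\frac{1}{2}e^{-u/2}$ on $(0,\infty)$, the variable $|F_N|^2/C_N^2$ has limiting distribution function $u \mapsto 1 - e^{-u/2}$, and therefore $|F_N|/C_N$ has limiting distribution function $x \mapsto 1 - e^{-x^2/2}$ for $x \geq 0$, which is exactly the claim.

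To make this rigorous I would argue directly at the level of distribution functions rather than invoking abstract convergence-in-distribution machinery. Fix $x \geq 0$. The set $\{z \in \T : |F_N(z)|/C_N \leq x\}$ is the preimage under $(R_N/C_N, I_N/C_N)$ of the closed disc $\overline{D}(0,x) \subset \R^2$. The Salem--Zygmund conclusion says that the pushforward measures $\mu_N$ of $m$ under $z \mapsto (R_N(z)/C_N, I_N(z)/C_N)$ converge weakly (as measures on $\R^2$) to the standard Gaussian measure $\gamma$ on $\R^2$: this is precisely what convergence of the joint distribution functions at all continuity points $(x,y)$ — here all points, since the Gaussian CDF is continuous — encodes. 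By the portmanteau theorem, weak convergence plus the fact that $\gamma(\partial \overline{D}(0,x)) = 0$ (the boundary circle has Lebesgue, hence Gaussian, measure zero) gives $\mu_N(\overline{D}(0,x)) \to \gamma(\overline{D}(0,x))$. Finally I compute $\gamma(\overline{D}(0,x)) = \frac{1}{2\pi}\int\int_{t^2+s^2 \leq x^2} e^{-(t^2+s^2)/2}\,dt\,ds$, which in polar coordinates equals $\int_0^x \rho e^{-\rho^2/2}\,d\rho = 1 - e^{-x^2/2}$. This yields the stated limit.

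There is one bookkeeping point worth spelling out: the statement of Theorem 3.3 as quoted gives convergence of $m(\{R_N/C_N \leq x,\ I_N/C_N \leq y\})$ to the bivariate Gaussian CDF, i.e.\ convergence of the distribution functions of the measures $\mu_N$ to that of $\gamma$. The passage from ``distribution functions converge at every point'' to ``$\mu_N \to \gamma$ weakly'' is standard (it is essentially the definition of weak convergence in $\R^d$ via the classical criterion, since every point of $\R^2$ is a continuity point of the continuous limit CDF); I would cite this or give the two-line argument using that finite intersections/differences of quadrants generate a convergence-determining class. Alternatively, and perhaps more in the spirit of the paper, one can avoid the portmanteau theorem entirely: approximate the disc $\overline{D}(0,x)$ from inside and outside by finite unions of axis-parallel rectangles, on each of which $\mu_N$-mass is a finite $\pm$-combination of values of the distribution functions and hence converges, then let the approximation improve; continuity of $\gamma$ handles the error terms. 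Either route is routine.

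The main obstacle is essentially none of substance — the content is entirely in Theorem 3.3, and what remains is the elementary but slightly fussy step of converting a statement about the joint CDF of the real and imaginary parts into a statement about the CDF of the modulus, which amounts to integrating the standard bivariate Gaussian over a disc. The only place to be mildly careful is ensuring that the hypotheses $C_N \to +\infty$ and $c_N/C_N \to 0$ of Theorem 3.3 are exactly the ones we carry along (they are, verbatim, as both propositions are stated ``under the hypotheses of Theorem 3.3''), and that the limiting covariance is genuinely the identity — which, as noted, is forced by the factor $\tfrac{1}{\sqrt2}$ in the definition of $C_N$ together with the lacunary near-orthogonality that underlies Salem--Zygmund in the first place.
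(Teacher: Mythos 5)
Your proposal is correct, and it reaches the same conclusion as the paper by a slightly different technical route. The paper also starts from the bivariate Gaussian limit of Theorem 3.3, but then computes the limiting law of $|U_N|^2+|V_N|^2$ (with $U_N=R_N/C_N$, $V_N=I_N/C_N$) by first finding the limiting density of each square (a $\chi^2_1$ density), then convolving the two densities --- justified by the asymptotic independence of $U_N$ and $V_N$ coming from the product form of the Gaussian limit together with the continuous mapping theorem --- and finally changing variables from $|F_N|^2/C_N^2$ to $|F_N|/C_N$ to land on the Rayleigh distribution $1-e^{-x^2/2}$. You instead upgrade the pointwise CDF convergence to weak convergence of the pushforward measures and apply the portmanteau theorem to the closed disc $\overline{D}(0,x)$, whose boundary is Gaussian-null, then evaluate the Gaussian measure of the disc in polar coordinates. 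Your route buys a cleaner justification: it avoids the convolution integral and, more importantly, sidesteps the somewhat informal step in the paper where asymptotic independence is invoked to legitimize convolving limiting densities; the rectangle-approximation variant you sketch even removes the need to cite portmanteau. The paper's route buys explicitness (every density is computed by hand) at the cost of that extra care about why the convolution is the right operation. Both are sound; the only point you should make sure to spell out, as you note, is the standard equivalence in $\R^2$ between convergence of joint distribution functions at all continuity points of a continuous limit and weak convergence of the corresponding measures.
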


%
%
%
%

\begin{proof}

Let $U_N = \frac{R_N}{C_N}$ and $V_N = \frac{I_N}{C_N}$. We will compute the distribution of $\sqrt{|U_N|^2+|V_N|^2}$. First of all, for $x \geq 0$ we have $|U_N| \leq x \iff |U_N|^2 \leq x^2$, which means that:

\begin{gather*}
m(|U_N|^2 \leq x^2) \ra \frac{1}{\sqrt{2 \pi}} \int_{-x}^x e^{- \frac{1}{2} t^2} \,dt = \frac{2}{\sqrt{2 \pi}} \int_{0}^x e^{- \frac{1}{2} t^2} \,dt
\end{gather*}

\noindent And thus by a change of variables:

\begin{equation*}
m(|U_N|^2 \leq x) \ra \frac{1}{\sqrt{2 \pi}} \int_{0}^x \frac{1}{\sqrt{t}} e^{- \frac{t}{2}} \,dt =
\frac{1}{\sqrt{2 \pi}} \int_{-\infty}^x \frac{1}{\sqrt{t}} e^{- \frac{t}{2}} \ind_{ \{t \geq 0\} } \ \,dt
\end{equation*}

\noindent The same can be said about the distribution of $|V_N|^2$. Since the joint distribution of $U_N$ and $V_N$ is asymptotically Gaussian, the variables are asymptotically independent. That means that we can use the continuous mapping theorem, to compute the asymptotic distribution of $|U_N|^2 + |V_N|^2$ by convoluting the density functions of each of  the summands. Therefore the density of $|U_N|^2 + |V_N|^2$ will be given by the formula:

\begin{align*}
\varrho(x) &=  \int_{-\infty}^{\infty} \frac{1}{2\pi} \frac{1}{\sqrt{x-y}}\frac{1}{\sqrt{y}}\ind_{ \{y \geq 0 \} } \ind_{ \{x \geq y \} } e^{-\frac{(x-y)}{2}} e^{-\frac{y}{2}} \,dy \\
&= \frac{e^{-\frac{x}{2}}}{2\pi} \int_{0}^x \frac{1}{\sqrt{x-y}}\frac{1}{\sqrt{y}} \,dy , \quad x \geq 0.
\end{align*}

\noindent We substitute $\sqrt{y} = u $ to obtain:

\begin{gather*}
\varrho(x) = \frac{e^{-\frac{x}{2}}}{2\pi} \int_0^{\sqrt{x}} \frac{2}{\sqrt{x}} \frac{1}{ \sqrt{1 - (\frac{u}{\sqrt{x}})^2}} \,du, \quad x \geq 0.
\end{gather*}

\noindent Then we substitute $\frac{u}{\sqrt{x}} = \sin t	$ to get:

\begin{gather*}
\varrho(x) = \frac{e^{-\frac{x}{2}}}{2\pi} \int_0^{\frac{\pi}{2}}
\frac{2}{\sqrt{x}}\frac{ \sqrt{x} \cos t}{\cos t } \,dt = \frac{e^{-\frac{x}{2}}}{2} \, ,  \quad x \geq 0.
\end{gather*}

\noindent What we computed means that:

\begin{gather*}
m( |U_N|^2 + |V_N|^2 \leq x	) \ra \int_0^x \frac{1}{2} e^{-\frac{t}{2}}\,dt , \quad x \geq 0.
\end{gather*}

\noindent Hence a final change of variables gives us that:

\begin{gather*}
m \bigg( \frac{|F_N|}{C_N} \leq x \bigg) \ra \int_0^x t e^{-\frac{1}{2}t^2}\,dt = 1-e^{-\frac{x^2}{2}} , \quad x \geq 0.
\end{gather*}

\end{proof}

%
%
%
%

 We introduce a family of lacunary polynomials which will be the basic building blocks to construct functions that generate invariant subspaces of high index. Let $f_s(z) = \sum_{m=s}^{2s} z^{3^m}$, $s \in \N$ be a lacunary polynomial. We associate to every $f_s$ a radius $r_s = 1 - \frac{1}{3^{2s}}$, and a function $X_s(r) = \frac{1}{\sqrt{\log \frac{1}{1-r}}} f_s(r)$, $0<r<1$. 

\begin{lemma}
Let $f_s$ and $r_s$ be as above. There exists a constant $C>0$ such that:

\begin{gather*}
\frac{1}{C} \cdot s \leq \Vert f_s(r_s \cdot) \Vert_{L^2(\T)}^2 \leq C \cdot s \,\, , s \geq 1.
\end{gather*}

\end{lemma}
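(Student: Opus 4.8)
The plan is to compute $\Vert f_s(r_s\,\cdot)\Vert_{L^2(\T)}^2$ directly via orthogonality of the monomials $z^{3^m}$ on the circle, obtaining $\Vert f_s(r_s\,\cdot)\Vert_{L^2(\T)}^2 = \sum_{m=s}^{2s} r_s^{2\cdot 3^m}$, and then show that each of the $s+1$ terms in this sum is bounded above and below by absolute constants, which immediately yields the two-sided estimate with, say, any $C$ slightly bigger than $2$. So the whole thing reduces to controlling $r_s^{2\cdot 3^m}$ for $s \le m \le 2s$, where $r_s = 1 - 3^{-2s}$.

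The key step is the elementary asymptotics of $r_s^{2\cdot 3^m} = (1 - 3^{-2s})^{2\cdot 3^m}$. Writing this as $\exp\bigl(2\cdot 3^m \log(1 - 3^{-2s})\bigr)$ and using $\log(1-x) = -x + O(x^2)$ for small $x$, the exponent is $-2\cdot 3^m\cdot 3^{-2s}\bigl(1 + O(3^{-2s})\bigr) = -2\cdot 3^{m-2s}\bigl(1 + O(3^{-2s})\bigr)$. Since $s \le m \le 2s$, the quantity $3^{m-2s}$ ranges over $[3^{-s}, 1]$, so the exponent lies in an interval like $[-2 - o(1),\, o(1)]$; concretely for $s$ large every term $r_s^{2\cdot 3^m}$ lies in $[e^{-3}, 1]$, say. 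Hence for all large $s$,
\begin{equation*}
e^{-3}(s+1) \le \sum_{m=s}^{2s} r_s^{2\cdot 3^m} \le s+1,
\end{equation*}
and adjusting the constant $C$ to absorb the finitely many small values of $s$ (for which the sum is a fixed positive number, since $f_s(r_s\,\cdot)$ is a nonzero polynomial and $r_s \in (0,1)$) gives the claim for all $s \ge 1$.

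I do not expect a serious obstacle here; the only mild care needed is to make the inequality $\log(1-3^{-2s}) \ge -3^{-2s} - (3^{-2s})^2$ (or a similarly explicit bound) rigorous rather than asymptotic, so that the bound $r_s^{2\cdot 3^m} \ge e^{-3}$ holds for \emph{all} $s$ in the relevant range of $m$ and not merely asymptotically — but this is a one-line numerical estimate. One should also note explicitly that $\Vert f_s(r_s\,\cdot)\Vert_{L^2(\T)}^2 = \sum_{m=s}^{2s} r_s^{2\cdot 3^m}$ uses only that $\{z^{3^m}\}$ are distinct monomials (the lacunarity $3^{m+1}/3^m = 3 \ge 2$ is not even needed for this particular identity, only that the exponents are distinct), and that the number of terms is $s+1$, which is comparable to $s$. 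Putting these pieces together finishes the proof.
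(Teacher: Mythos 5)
Your proposal is correct and follows essentially the same route as the paper: Parseval (orthogonality of the distinct monomials) gives $\sum_{m=s}^{2s} r_s^{2\cdot 3^m}$, the upper bound $s+1$ is immediate, and the lower bound comes from the worst term $m=2s$, where $(1-3^{-2s})^{2\cdot 3^{2s}}\to e^{-2}$ — the paper invokes this limit directly while you expand the logarithm, which is the same estimate. No gap.
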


%
%
%
%

\begin{proof}

By Parseval's formula we have:

\begin{gather*}
\Vert f_s(r_s \cdot) \Vert_{L^2}^2 = \sum_{m=s}^{2s}r_s^{2 \cdot 3^m} \leq s+1
\end{gather*}

On the other hand,

\begin{gather*}
\sum_{m=s}^{2s}r_s^{2 \cdot 3^m} \geq \sum_{m=s}^{2s}r_s^{2 \cdot 3^{2s}} = \sum_{m=s}^{2s} \bigg(1 - \frac{1}{3^{2s}} \bigg)^{2 \cdot 3^{2s}} = \bigg(1 - \frac{1}{3^{2s}} \bigg)^{2 \cdot 3^{2s}} (2s - s +1).
\end{gather*}

Since $(1 - \frac{1}{n})^{2n}$ converges to $\frac{1}{e^2}$ as $n \ra + \infty$ we get the reverse inequality.

\end{proof}

%
%
%
%

\begin{lemma}
Let $f_s$ and $r_s$ be as above. There exists a constant $c > 0$ such that for every $\eps >0$ and every $M > 0$ there exists arbitrary large $s \in \N$ with the property that for every $0 \leq x \leq M$ we have

\begin{gather*}
m \bigg( \bigg\{ \zeta \in \T \, : \, |f_s(r_s \zeta)| > x \sqrt{\log \frac{1}{1-r_s}}  \bigg\}\bigg) \geq e^{-cx^2} - \eps.
\end{gather*}

\end{lemma}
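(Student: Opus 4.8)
The plan is to reduce the statement to Proposition 3.4, the obstacle being that the latter concerns the partial sums of one fixed lacunary series whereas here the coefficients of the dilated polynomial $f_s(r_s\,\cdot\,)=\sum_{m=s}^{2s}r_s^{3^m}\zeta^{3^m}$ depend on $s$. To circumvent this I would fix, once and for all, a rapidly increasing sequence $s_1<s_2<\cdots$ of positive integers with $s_{j+1}>2s_j$ for all $j$ and growing fast enough that $\sum_{i<j}(s_i+1)=o(\sqrt{s_j})$ as $j\to\infty$. Setting $g_j(\zeta)=f_{s_j}(r_{s_j}\zeta)=\sum_{m=s_j}^{2s_j}r_{s_j}^{3^m}\zeta^{3^m}$, the exponent blocks $\{3^{s_j},3^{s_j+1},\dots,3^{2s_j}\}$ are pairwise disjoint and increase with $j$, so the power series $F(\zeta)=\sum_{j\ge1}g_j(\zeta)$ has all consecutive exponent ratios at least $3$ and is therefore lacunary. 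Its nonzero coefficients $r_{s_j}^{3^m}$ all lie in $[(8/9)^9,1]$ (since $(1-1/n)^n$ increases to $e^{-1}$), so with the normalization $C_N=\tfrac{1}{\sqrt{2}}\bigl(\sum_{k\le N}|c_k|^2\bigr)^{1/2}$ one has $C_N^2\ge\tfrac{1}{2}N(8/9)^{18}\to\infty$ and $|c_N|/C_N\le 1/C_N\to0$; hence Proposition 3.4 applies to $F$.

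Next I would identify the relevant partial sums. With $N_j=\sum_{i\le j}(s_i+1)$ we have $F_{N_j}=\sum_{i\le j}g_i=g_j+h_j$, where $h_j=\sum_{i<j}g_i$ obeys $\|h_j\|_{L^\infty(\T)}\le\sum_{i<j}(s_i+1)=o(\sqrt{s_j})$. By Lemma 3.5, $\|g_j\|_{L^2(\T)}^2=\|f_{s_j}(r_{s_j}\,\cdot\,)\|_{L^2(\T)}^2\asymp s_j$, so $\|h_j\|_{L^\infty}=o(\|g_j\|_{L^2})$; consequently the triangle inequality gives $C_{N_j}=\tfrac{1}{\sqrt{2}}\|F_{N_j}\|_{L^2}=\tfrac{1}{\sqrt{2}}\|g_j\|_{L^2}(1+o(1))$, and $|g_j(\zeta)|=|F_{N_j}(\zeta)|+o(C_{N_j})$ uniformly for $\zeta\in\T$. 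Since the limiting distribution function in Proposition 3.4 is continuous, the convergence there is uniform over $x\in\R$ (P\'olya); combining this with the two estimates just obtained yields
\[
m\bigl(\{\zeta\in\T:\ |g_j(\zeta)|>y\,C_{N_j}\}\bigr)\longrightarrow e^{-y^2/2}\qquad(j\to\infty),\ \text{uniformly for }y\ge0 .
\]

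Finally I would rescale. Put $\Lambda=4\sqrt{C\log 3}$ with $C$ the constant of Lemma 3.5, and $c=\Lambda^2/2=8C\log 3$, a constant depending only on the absolute constant of Lemma 3.5. From $\log\frac{1}{1-r_{s_j}}=2s_j\log 3$, $\|g_j\|_{L^2}^2\ge s_j/C$, and $C_{N_j}=\tfrac{1}{\sqrt{2}}\|g_j\|_{L^2}(1+o(1))$, one checks immediately that $\sqrt{\log\frac{1}{1-r_{s_j}}}\le\Lambda\,C_{N_j}$ for all large $j$. Hence, for every $x\in[0,M]$,
\[
m\bigl(\{\zeta\in\T:\ |f_{s_j}(r_{s_j}\zeta)|>x\sqrt{\log\tfrac{1}{1-r_{s_j}}}\}\bigr)\ \ge\ m\bigl(\{\zeta\in\T:\ |g_j(\zeta)|>x\Lambda\,C_{N_j}\}\bigr),
\]
and since $x\Lambda$ runs over the compact interval $[0,M\Lambda]$, the uniform limit above shows that, given $\eps>0$, the right‑hand side is at least $e^{-\Lambda^2 x^2/2}-\eps=e^{-cx^2}-\eps$ for all $x\in[0,M]$ once $j$ is large enough. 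As $s_j\to\infty$ this provides arbitrarily large admissible values $s=s_j$, which is precisely the assertion. The one delicate point is the rate of growth imposed on $s_j$: it must be fast enough that the accumulated tail $h_j$ of the partial sum is negligible against the normalization $C_{N_j}$, so that $f_{s_j}(r_{s_j}\,\cdot\,)$ may be treated as a genuine partial sum of the single lacunary series $F$; the rest is bookkeeping with the elementary estimates already at hand.
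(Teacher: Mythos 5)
Your argument is correct, and it is a genuinely different reduction from the one in the paper. The paper applies the Salem--Zygmund statement directly to the dilated blocks $f_s(r_s\,\cdot\,)$ themselves, appealing to the fact that the convergence in Salem and Zygmund's theorem is quantitative: it depends only on the length $s+1$ of the block and on the gap ratio ($\geq 3$), so it holds uniformly over the family of changing polynomials, with uniformity in $x$ on bounded intervals; this requires (implicitly) going back to the proof of the theorem, since the stated Proposition 3.4 concerns the partial sums of one fixed series. You instead avoid any such re-examination by concatenating the blocks $f_{s_j}(r_{s_j}\,\cdot\,)$, for a sparse prechosen sequence $s_j$ with $\sum_{i<j}(s_i+1)=o(\sqrt{s_j})$, into a single fixed lacunary series $F$, so that Proposition 3.4 applies verbatim; the block $g_j$ then differs from the partial sum $F_{N_j}$ by at most $\|h_j\|_\infty=o(\sqrt{s_j})=o(C_{N_j})$ in sup norm, and $C_{N_j}=\tfrac{1}{\sqrt2}\|g_j\|_{L^2}(1+o(1))$, so the Rayleigh asymptotics transfer to $g_j$; uniformity in $x$ comes from P\'olya's theorem (continuity of the limit law), and the comparison $\sqrt{\log\tfrac{1}{1-r_{s_j}}}\leq\Lambda C_{N_j}$ via Lemma 3.5 produces the explicit constant $c=\Lambda^2/2=8C\log 3$. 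What each approach buys: the paper's route treats every large $s$ and keeps the argument short, at the price of an assertion about uniformity in the underlying polynomial that is not covered by the black-box statement of Proposition 3.4; your route is self-contained relative to the stated tools and yields an explicit $c$, at the price of restricting to the fixed subsequence $\{s_j\}$ (which is harmless, since the lemma only claims arbitrarily large $s$, and that is all that the inductive construction in Proposition 3.7 uses) and of the extra bookkeeping with $h_j$, $C_{N_j}$ and the uniform convergence of distribution functions. The only points worth making explicit if you write this up: the one-sided estimate you need is obtained from $|g_j|\geq|F_{N_j}|-\delta C_{N_j}$ together with uniform continuity of $e^{-y^2/2}$, and the bound $C_{N_j}^2\geq s_j/(2C)$ already holds without the $(1+o(1))$ correction, which is where the inequality $\sqrt{2s_j\log 3}\leq\Lambda C_{N_j}$ really comes from.
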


%
%
%
%

\begin{proof}
We wish to apply Theorem 3.4. First we notice that:

\begin{gather*}
\sqrt{\log \frac{1}{1-r_s}} = \sqrt{2s \log3} \asymp \Vert f_s(r_s \cdot)\Vert_{L^2} \ra + \infty.
\end{gather*}

Following the notation of Theorem 3.4, the above means that:

\begin{gather*}
C_{2s} = \frac{1}{\sqrt{2}}(|c_1|^2 + |c_2|^2 + \cdots + |c_{2s}|^2)^{\frac{1}{2}} = \frac{1}{\sqrt{2}} \Vert f_s(r_s \cdot)\Vert_{L^2} \ra + \infty
\end{gather*}

Moreover it is clear that all non-zero Taylor coefficients are bounded. This means that both conditions of Theorem 3.4 are satisfied. The fact that we can apply Theorem 3.4 to a lacunary polynomial of this form comes from the fact that the convergence in Salem and Zygmund's Theorem is guaranteed only by the ``length'' $s+1$ of the block, and the fact that the lacunary gap on the exponents is at least 3. Finally, the convergence is uniform in $x$ whenever it belongs to a fixed bounded interval. 
\end{proof}

%
%
%
%

We finish this section by observing the following property of the functions $X_s(r)$: 

\begin{align}
\text{For every} \,\, s \in \N, \,\, X_s(r) \ra 0 , \,\, \text{as} \,\, r \ra 1^- \,\, \text{and} \\
\text{for every}\,\, r \in (0,1), \,\, X_s(r) \ra 0, \,\, \text{as} \,\, s \ra \infty.
\end{align}

These properties are analogous to (18) and (19). Property (20) is straightforward. For property (21), fix $r \in (0,1)$. Then:

\begin{gather*}
X_s(r) = \frac{1}{\sqrt{\log \frac{1}{1-r}}} \sum_{m=s}^{2s}r^{3^m} \leq \frac{1}{\sqrt{\log \frac{1}{1-r}}} \sum_{m=s}^{2s}r^{3^s} = \frac{1}{\sqrt{\log \frac{1}{1-r}}} (s+1) r^{3^s}
\end{gather*}

And $(s+1)r^{3^s}$ converges to zero as $s \ra \infty$.

\subsection{Invariant subspaces in $\BB_0$ of arbitrary index - Construction}

We consider the following functions:

\vspace{2mm}

\begin{gather*}
f_i(z) = 1 + \sum_{j=i}^{\infty} \delta_j f_{i,j}(z) \,\, , z \in \D, \,\, 1 \leq i < + \infty ,
\end{gather*}

\noindent where $f_{i,j}$ are the blocks

\begin{gather*}
f_{i,j}(z) = \sum_{m = s(i,j)}^{2s(i,j)} z^{3^m} \,\, , z\in \D,
\end{gather*}

\noindent and $\delta_j = 2^9 \cdot \sqrt{\frac{c}{j}}$, with $c$ the constant appearing in Lemma 3.6. Since every function is lacunary and the coefficients $\delta_j$ tend to zero, these functions belong to $\BB_0$. To each block are assigned its associated parameters:

\begin{itemize}

\item $\fa \, i \geq 1 \, , \fa \, j \geq i  $ set
\begin{gather*}
r(i,j) = 1 - \frac{1}{3^{2s(i,j)}}
\end{gather*}

\vspace{1mm}

\item $\fa  \, i \geq 1 \, , \fa j \geq i \, ,  \fa \, r \in (0,1)  $ set
\begin{gather*}
X_{i,j}(r) = \frac{1}{\sqrt{\log \frac{1}{1-r}}} \sum_{m=s(i,j)}^{2s(i,j)}r^{3^m}
\end{gather*}

\end{itemize}

\begin{proposition}
There exists a sequence $\{s(i,j)\}_{1 \leq i \leq j < \infty} \sbs \N$, such that:

\begin{itemize}

\item[$(1.i.j)$] For all $i \in \N$ and $j \geq i$ we have $s(i,j) > \max\{ 2^{4j+4} , s(i',j') \, : \, i' < i , \, j' < j  \}$

\vspace{1mm}

\item[$(2.i.j)$] For all $i \in \N$ and $j \geq i$ and $(i',j') \neq (i,j)$ we have 
\begin{gather*}
X_{i,j}(r(i',j')) \leq \frac{1}{\delta_j} \cdot \frac{1}{2^{i+i'+j+2j'+2}}
\end{gather*}

\vspace{1mm}

\item[$(3.i.j)$] For all $i \in \N$ and $j \geq i$ we have 

\begin{gather*}
m\bigg( \bigg\{ \zeta \in\T : |f_{i,j}(r(i,j) \zeta)| > \sqrt{\frac{1}{c \cdot 2^{j+6}}} \cdot \sqrt{\log \frac{1}{1-r(i,j)}}   \bigg\} \bigg) \geq  1 -  \frac{1}{2^{j+5}},
\end{gather*}

where $c$ is the constant appearing in Lemma 3.6.

\end{itemize}
\end{proposition}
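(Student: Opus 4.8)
## Proof proposal for Proposition 3.8

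\textbf{Approach.} The plan is to construct the triple-indexed sequence $s(i,j)$ by a single induction, enumerating the pairs $(i,j)$ with $1 \le i \le j < \infty$ in the order dictated by condition $(1.i.j)$ (roughly, diagonal by diagonal: $(1,1),(1,2),(2,2),(1,3),(2,3),(3,3),\dots$), and at each step choosing $s(i,j)$ large enough to satisfy all three requirements against the finitely many previously chosen terms. This is exactly the inductive template used in the proof of Lemma 2.1, now carrying a third, probabilistic, constraint coming from Salem--Zygmund via Lemma 3.6. The key point is that each of the three conditions is of the form ``$s(i,j)$ sufficiently large'', so finitely many such demands can be met simultaneously.

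\textbf{Key steps.} First I would fix the enumeration of the index set and set up the induction: suppose $s(i',j')$ has been defined for all pairs preceding $(i,j)$. Condition $(1.i.j)$ is then immediate — just take $s(i,j)$ bigger than the finite set $\{2^{4j+4}\} \cup \{s(i',j') : i' < i,\ j' < j\}$ (strict monotonicity along the enumeration, matching the role of properties (i)--(ii) of Lemma 2.1). Second, for $(2.i.j)$, observe that $X_{i,j}$ and $X_{i',j'}$ satisfy the two-variable vanishing property (20)--(21): for fixed $r$, $X_{i,j}(r) \to 0$ as $s(i,j) \to \infty$ (since $X_{i,j}(r) \le \frac{1}{\sqrt{\log\frac{1}{1-r}}}(s(i,j)+1)r^{3^{s(i,j)}} \to 0$), and for fixed block, $X_{i,j}(r) \to 0$ as $r \to 1^-$. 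Hence, enlarging $s(i,j)$ if necessary, we can force $X_{i,j}(r(i',j')) < \frac{1}{\delta_j}2^{-(i+i'+j+2j'+2)}$ for every one of the finitely many already-defined $(i',j')$; and for the newly created pairs $(i',j')$ that will be defined \emph{after} $(i,j)$, the inequality $X_{i',j'}(r(i,j)) < \frac{1}{\delta_{j'}}2^{-(i+i'+j+2j'+2)}$ will be arranged when those later terms are chosen, since $r(i,j)$ is by then a fixed radius and $X_{i',j'}(r(i,j)) \to 0$ as $s(i',j') \to \infty$. (Care is needed to phrase the induction so that for each unordered requirement $\{(i,j),(i',j')\}$ the ``later'' of the two terms is the one responsible for securing it.) Third, for $(3.i.j)$, apply Lemma 3.6 with the choices $x = \sqrt{\frac{1}{c\,2^{j+6}}}$ (which lies in a fixed bounded interval, so the uniform-in-$x$ clause applies) and $\eps = 2^{-(j+5)}$: the lemma guarantees that \emph{arbitrarily large} $s$ work, i.e.\ $m(\{\zeta : |f_s(r_s\zeta)| > x\sqrt{\log\frac{1}{1-r_s}}\}) \ge e^{-cx^2} - \eps$. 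Here $e^{-cx^2} = e^{-2^{-(j+6)}} \ge 1 - 2^{-(j+6)}$, so the right-hand side is $\ge 1 - 2^{-(j+6)} - 2^{-(j+5)} \ge 1 - 2^{-(j+5)}$ after a crude estimate — wait, this needs $2^{-(j+6)}+2^{-(j+5)} \le 2^{-(j+5)}$, which is false; so instead one runs Lemma 3.6 with $\eps = 2^{-(j+6)}$, giving $\ge 1 - 2^{-(j+6)} - 2^{-(j+6)} = 1 - 2^{-(j+5)}$, as required. Since Lemma 3.6 permits $s$ to be taken as large as we wish, this final demand is compatible with the lower bounds from $(1.i.j)$ and $(2.i.j)$; picking $s(i,j)$ to be any value exceeding all those bounds and simultaneously validating Lemma 3.6's conclusion completes the inductive step.

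\textbf{Main obstacle.} The only real subtlety — and the step I would be most careful about — is the bookkeeping in $(2.i.j)$: the condition is symmetric in $(i,j) \leftrightarrow (i',j')$ but the induction is not, so one must commit to the convention that the requirement for the unordered pair is discharged at the moment the \emph{second} of its two members is chosen, and verify that at that moment the other member's radius (resp.\ block) is already frozen so that the relevant ``$X \to 0$ as $s \to \infty$'' limit is available. Everything else is the standard ``choose it large enough'' diagonal construction, with Lemma 3.6 supplying the extra freedom needed for the probabilistic clause; there is no tension between the three demands because each is monotone (satisfied for all sufficiently large $s(i,j)$), so their conjunction is too.
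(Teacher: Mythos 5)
Your proposal is correct and follows essentially the same route as the paper: an inductive, lexicographic ``choose $s(i,j)$ large enough at each step'' construction, discharging condition $(2.i.j)$ via the vanishing properties (20)--(21), and condition $(3.i.j)$ by applying Lemma 3.6 with $x=\sqrt{\tfrac{1}{c\,2^{j+6}}}$ and $\eps = \tfrac{1}{2^{j+6}}$ together with $e^{-x}>1-x$ (your mid-stream correction of $\eps$ lands exactly on the paper's choice).
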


Set $E_{i,j} = \bigg\{\zeta \in \T \, :\, |f_{i,j}(r(i,j)\zeta)| \geq \sqrt{\frac{1}{c \cdot 2^{j+6}}} \cdot \sqrt{  \log\frac{1}{1-r(i,j)}} \bigg\}$ and $F_{i,j} = \T \backslash E_{i,j}$

\begin{proof}
We define the sequence inductively, respecting the lexicographic order. It is clear that to obtain the above conditions it just suffices to make sure that $s(i,j)$ is large enough at each step. In particular, for condition \textit{(3.i.j)} we apply Lemma 3.6 with $x = \sqrt{\frac{1}{c \cdot 2^{j+6}}}$, $\eps = \frac{1}{2^{j+6}}$ and apply the inequality $e^{-x} > 1 - x $ which holds for $x<1$.  

\end{proof}

%
%
%
%

\noindent \textit{Proof of Theorem 1.5.}

\vspace{2mm}

As in the proof of Theorem 1.3, we define again $E_N = \bigvee_{1 \leq i \leq N}[f_i] \sbs \BB_0$ and $E_{\infty} =\bigvee_{1 \leq i < \infty}[f_i] \sbs \BB_0$ where $f_i$, $1 \leq i < \infty$ are the functions described in the beginning of section 3.2 and the implicit sequence $\{ s(i,j)\}_{1 \leq i\leq j < \infty}$ satisfies the conditions of Proposition 3.7. Following a similar argument as in Theorem 1.3, we fix $1 \leq M < \infty$, $1 \leq i_1 < i_2 < \cdots < i_M \in \N $ and $p_1,p_2, \ldots, p_M $ be polynomials, such that $\no{\sum_{m=1}^M p_m f_{i_m}}_{\BB} \leq 1$. We wish to apply Lemma 2.1 to the function $\sum_{m=1}^M p_m f_{i_m}$ in order to bound the values $p_m(0)$, $1 \leq m \leq M$, by some constants independent of the polynomials. 

\newpage

Define for every $1 \leq m \leq M$ and every $j \geq i_m$ the set:

\begin{gather*}
    U_{i_m,j} = \{ \zeta \in E_{i_m,j} \, : \, |p_m(r(i_m,j)\zeta)| \geq 2^{j-1} \}.
\end{gather*}

We will implement the following scheme. We assume that for some $J > i_M$ we have the following two inequalities:

\begin{gather}
    m(U_{i_m,J}) \leq \frac{J}{2^{J+5}} \,\, , \, 1 \leq m \leq M,  \\
    \no{p_m}_{L^{2^{J+1}}(r(i_m,J+1)\T)} \leq 2^{J+1} \,\, , \, 1 \leq m \leq M,
\end{gather}

and we will prove that :

\begin{gather}
    m(U_{i_m,J-1}) \leq \frac{J-1}{2^{J+4}} \,\, , \, 1 \leq m \leq M,  \\
    \no{p_m}_{L^{2^J}(r(i_m,J)\T)} \leq 2^{J} \,\, , \, 1 \leq m \leq M.
\end{gather}

The fact that we can assume (22) and (23) for all $J$ large enough comes from the fact that the polynomials $p_m$ are bounded on the closed unit disc. Give the success of this argument, we may iterate it, starting from some radius close enough to one until we obtain:

\begin{gather*}
    \no{p_m}_{L^{2^{i_M}}(r(i_m,i_M)\T)} \leq 2^{i_M} \,\, , \, 1 \leq m \leq M,
\end{gather*}

\noindent which means that $|p_m(0)| \leq 2^{i_M}$ for every $1 \leq m \leq M$. The constant does not depend on the polynomials $p_m$, so using Lemma 2.1 we will have proven Theorem 1.5.

\vspace{2mm}
For the rest of the proof, fix some $1 \leq n \leq M$ and some $J > i_M$. First we demonstrate how to obtain inequality (25). Let $A = \{ \zeta \in \T \, : \, |p_n(r(i_n,J)\zeta)| \geq 2^{J-1}\}$. Then: 

\begin{align*}
    m(A) & = m(A \cap E_{i_n,J}) + m(A \cap F_{i_n,J}) \\
         & \leq m(U_{i_n,J}) + m(F_{i_n,J})  \\
         & \leq \frac{J}{2^{J+5}} + \frac{1}{2^{J+5}} \\
         & \leq \frac{J}{2^J}\cdot \frac{1}{2^{5}} + \frac{1}{2^5} \\ 
         & \leq 2 \cdot \frac{1}{2^5} = \frac{1}{16}.
\end{align*}

We may now use Minkowski's inequality to write:

\begin{gather*}
     \no{p_n}_{L^{2^J}(r(i_n,J)\T)}  =  \bigg( \int_{\T \backslash A}|p_n(r(i_n,J)\zeta)|^{2^J} \,dm(\zeta) + \int_{A}|p_n(r(i_n,J)\zeta)|^{2^J} \,dm(\zeta) \bigg)^{\frac{1}{2^J}} \\
     \leq \bigg( \int_{\T \backslash A}|p_n(r(i_n,J)\zeta)|^{2^J} \,dm(\zeta) \bigg)^{\frac{1}{2^J}} + \bigg( \int_{A}|p_n(r(i_n,J)\zeta)|^{2^J} \,dm(\zeta) \bigg)^{\frac{1}{2^J}} .
\end{gather*}

Applying Cauchy-Schwartz inequality to the second integral we obtain:

\begin{gather*}
    \no{p_n}_{L^{2^J}(r(i_n,J)\T)} \leq  \bigg( \int_{\T \backslash A}|p_n(r(i_n,J)\zeta)|^{2^J} \,dm(\zeta) \bigg)^{\frac{1}{2^J}} + \sqrt{m(A)} \cdot  \no{p_n}_{L^{2^{J+1}}(r(i_n,J)\T)}.
\end{gather*}

Using the bound for the polynomial on the set $\T \backslash A$, our hypothesis (23) and the fact that $r(i_n,J) < r(i_n,J+1)$, we get that

\begin{gather*}
    \no{p_n}_{L^{2^J}(r(i_n,J)\T)} \leq  2^{J-1} + \frac{1}{4} \cdot 2^{J+1} = 2^J.
\end{gather*}

Inequality (25) is proven. We proceed to prove inequality (24). By (1.i.j) we see that $s(i,j) > s(1,1) \geq 3 $ and hence all radii $r(i,j)$ satisfy $r(i,j) \geq 1 - \frac{1}{e}$. We may thus apply Proposition 3.2 for the function  $\sum_{m=1}^M p_m f_{i_m}$, on the radius $r(i_n,J-1)$ and get:

\begin{gather*}
    \int_{r(i_n,J-1)\T} \exp \bigg\{ \frac{|\sum_{m=1}^M p_m f_{i_m}|^2}{8\log\frac{1}{1-r(i_n,J-1)}} \bigg\} \,dm \leq 2.
\end{gather*}

Applying Jensen's inequality for the exponential function yields:

\begin{gather*}
   \exp \bigg\{ \int_{r(i_n,J-1)\T}  \frac{|\sum_{m=1}^M p_m f_{i_m}|^2}{8\log\frac{1}{1-r(i_n,J-1)}}\,dm \bigg\}  \leq 2.
\end{gather*}

Using the inequality $|x + y|^2 = |x|^2 + |y|^2 + 2\text{Re}(xy) \geq |x|^2 + |y|^2 - 2|x||y|$ and applying it for $x = p_n\delta_{J-1} f_{i_n,J-1}$ and $y = \sum_{m=1}^M p_m f_{i_m} - p_n\delta_{J-1} f_{i_n,J-1} $ we obtain:

\begin{gather*}
\bigg| \sum_{m=1}^M p_m f_{i_m} \bigg|^2 \geq 
|p_n\delta_{J-1} f_{i_n,J-1}|^2 -  2 |p_n\delta_{J-1} f_{i_n,J-1}| \cdot \bigg| p_n \bigg( \sum_{\stackrel{j \geq i_n}{j \neq J-1}}\delta_j f_{i_n,j} \bigg) + \sum_{\stackrel{1 \leq m \leq M}{m \neq n} } p_m \bigg( \sum_{ j \geq i_m }\delta_j f_{i_m,j}  \bigg) \bigg|.
\end{gather*}

Since we are integrating over the radius $r(i_n,J-1)$ we may use (2.i.j) to get the following bound:

\begin{align*}
    \bigg| p_n \bigg( 1 + \sum_{\stackrel{j \geq i_n}{j \neq J-1}}\delta_j f_{i_n,j} \bigg) + \sum_{\stackrel{1 \leq m \leq M}{m \neq n} } p_m \bigg( 1 + \sum_{ j \geq i_m }\delta_j f_{i_m,j}  \bigg) \bigg| \\
    \leq |p_n| \bigg( 1 +\sum_{\stackrel{j \geq i_n}{j \neq J-1}}\delta_j X_{i_n,j}(r(i_n,J-1)) \cdot \sqrt{\log\frac{1}{1-r(i_n,J-1)}}  \bigg)  \\
    + \sum_{\stackrel{1 \leq m \leq M}{m \neq n} } |p_m| \bigg( 1 + \sum_{ j \geq i_m }\delta_j X_{i_m,j}(r(i_n,J-1)) \cdot \sqrt{\log\frac{1}{1-r(i_n,J-1)}} \bigg) \\ 
    \leq |p_n| \bigg( 1 +\sum_{\stackrel{j \geq i_n}{j \neq J-1}} \frac{1}{2^{2i_n + j + 2(J-1) + 2 }} \cdot \sqrt{\log\frac{1}{1-r(i_n,J-1)}}  \bigg) \\
    + \sum_{\stackrel{1 \leq m \leq M}{m \neq n} } |p_m| \bigg( 1 + \sum_{ j \geq i_m }\frac{1}{2^{i_n + i_m + j + 2(J-1) + 2 }} \cdot \sqrt{\log\frac{1}{1-r(i_n,J-1)}} \bigg).
\end{align*}

A short calculation then yields: 

\begin{align*}
    \bigg| p_n \bigg( 1 + \sum_{\stackrel{j \geq i_n}{j \neq J-1}}\delta_j f_{i_n,j} \bigg) + \sum_{\stackrel{1 \leq m \leq M}{m \neq n} } p_m \bigg( 1 + \sum_{ j \geq i_m }\delta_j f_{i_m,j}  \bigg) \bigg| \leq \sum_{m=1}^M |p_m|\bigg( 1 + \frac{1}{2^{2J+1}}\sqrt{\log\frac{1}{1-r(i_n,J-1)}}  \bigg).
\end{align*}

As a result: 

\begin{gather*}
    \frac{|\sum_{m=1}^M p_m f_{i_m}|^2}{8\log\frac{1}{1-r(i_n,J-1)}} \geq \frac{|p_n\delta_{J-1} f_{i_n,J-1}|^2}{8\log\frac{1}{1-r(i_n,J-1)}} - \frac{1}{4} \cdot \frac{|p_n \delta_{J-1} f_{i_n,J-1}|}{\sqrt{\log\frac{1}{1-r(i_n,J-1)}}} \cdot \bigg( \sum_{m=1}^M |p_m|\bigg(\frac{1}{\sqrt{\log\frac{1}{1-r(i_n,J-1)}}} + \frac{1}{2^{2J+1}} \bigg)  \bigg).
\end{gather*}

Taking into account condition (1.i.j) gives that $\frac{1}{\sqrt{\log\frac{1}{1-r(i_n,J-1)}}} \leq \frac{1}{2^{2J+1}}$. Therefore after integration on the circle of radius $r(i_n,J-1)$ and using Cauchy-Schwartz inequality, we obtain:

\begin{gather*}
    \int_{r(i_n,J-1)\T} \frac{|\sum_{m=1}^M p_m f_{i_m}|^2}{8\log\frac{1}{1-r(i_n,J-1)}} \,dm  \geq  \\
  \int_{r(i_n,J-1)\T} \frac{|p_n\delta_{J-1} f_{i_n,J-1}|^2}{8\log\frac{1}{1-r(i_n,J-1)}} \,dm - \frac{1}{4} \sum_{m=1}^M \frac{1}{2^{2J}} \bigg(\int_{r(i_n,J-1)\T} \frac{|p_n\delta_{J-1} f_{i_n,J-1}|^2}{\log\frac{1}{1-r(i_n,J-1)}} \,dm \bigg)^{\frac{1}{2}}\no{p_m}_{L^2(r(i_n,J-1)\T)} .
\end{gather*}

\newpage

Finally, we notice that because of (1.i.j):

\begin{gather*}
    \no{p_m}_{L^2(r(i_n,J-1)\T)}  \leq \no{p_m}_{L^{2^J}(r(i_n,J-1)\T)}  \leq \no{p_m}_{L^{2^J}(r(i_m,J)\T)} \leq 2^J.
\end{gather*}

Moreover $J > i_M$ so $\frac{M}{2^J} \leq 1$. Therefore we obtain the following inequality:

\begin{gather*}
    2 \geq \exp \bigg\{ \int_{r(i_n,J-1)\T}  \frac{|\sum_{m=1}^M p_m f_{i_m}|^2}{8\log\frac{1}{1-r(i_n,J-1)}}\,dm \bigg\}  \geq \exp(X(X-1)),
\end{gather*}

where $X =\bigg(\int_{r(i_n,J-1)\T} \frac{|p_n\delta_{J-1} f_{i_n,J-1}|^2}{\log\frac{1}{1-r(i_n,J-1)}} \,dm \bigg)^{\frac{1}{2}} $. As a result it must be that $X \leq 2$, which in turn gives:

\begin{gather*}
    \int_{r(i_n,J-1)\T} \frac{|p_n\delta_{J-1} f_{i_n,J-1}|^2}{\log\frac{1}{1-r(i_n,J-1)}} \,dm \leq 32.
\end{gather*}

Restricting ourselves on the set $U_{i_n,J-1}$ and using the growth of the block $f_{i_n,J-1}$ and the size of the polynomial $p_n$ yields exactly the desired result, i.e:

\begin{gather*}
    m(U_{i_n,J-1}) \leq \frac{J-1}{2^{J+4}}.
\end{gather*}

%
%
%
%

\section{Weak star closed invariant subspaces \& Stability of Index}

In this last section we discuss the index of a weak-star ($w^{\ast}$) closed, invariant subspaces of a Banach space. In the first subsection duality in the Bloch spaces is introduced, to provide a concrete example. Following that we extend several of Richter's results from \cite{Richter1987} for the weak-star topology, and prove Theorem 1.6. Finally we may apply that to the Bloch spaces and obtain Theorem 1.7.

\subsection{Duality in the Bloch spaces}

Consider the Bergman space $A^1$ of integrable functions in the unit disc, as well as the following dual pairings:

\begin{gather*}
\langle \cdot , \cdot \rangle : \BB_0 \times A^1 \ra \C \\
\langle f , g \rangle = \lim_{r \ra 1} \int_{\D}f(rz) \overline{g(rz)} \,dA(z),
\end{gather*}

\noindent and,

\begin{gather*}
\langle \cdot , \cdot \rangle : A^1 \times \BB \ra \C \\
\langle g , f \rangle = \lim_{r \ra 1} \int_{\D}g(rz) \overline {f(rz)} \,dA(z).
\end{gather*}

\noindent It is proven in \cite{Hedenmalm2000},\cite{Zhu2014}, \cite{Axler1988} that these pairings are well defined and realize the dualities $( \BB_0 )^{\ast} \cong A^1$ and $(A^1)^{\ast} \cong \BB$. We can therefore endow the space $\BB_0$ with the weak topology inherited from its dual space, and the space $\BB$ with the $w^{\ast}$-topology inherited from its pre-dual. 

\newpage

The topology in $\BB$ can be characterized in terms of nets, as is done in \cite{Anderson1991}. If $\{ f_i \}_{i \in I} \sbs \BB$ is a net, then:

\begin{equation}
f_i \stackrel{w^{\ast}}{\ra} 0 \iff 
\begin{cases}
f_i(z) \ra 0 \,\, , \quad \fa z \in \D \\
\limsup_{i}\no{f_i}_{\BB} < + \infty    
\end{cases}
\end{equation}

\noindent The above statement remains true if we replace $w^{\ast}$-convergence by weak convergence and the net $\{ f_i \}_{i \in I}$ belongs to $\BB_0$. If $f \in \BB_0$ then $f$ is norm-cyclic in $\BB_0$ if and only if $f$ is $w^{\ast}$-cyclic in $\BB$ (\cite{Anderson1991}). Since polynomials are norm-dense in $\BB_0$, the constant function $1$ is norm-cyclic in $\BB_0$ and so $1$ must be $w^{\ast}$-cyclic in $\BB$, or equivalently, polynomials are $w^{\ast}$-dense in $\BB$. In particular, we may consider polynomials belonging in the set $\{ a_0 + a_1z + \cdots + a_N z^N \, : \, N \in \N, a_i \in \Q + i\Q \}$, thus obtaining a countable, norm-dense set in $\BB_0$ and hence a countable, $w^{\ast}$-dense set in $\BB$. This means that the space $(\BB, w^{\ast})$ is separable. 

Our aim is to produce $w^{\ast}$-closed invariant subspaces of arbitrary index in the Bloch space. Since $(\BB, w^{\ast})$ is separable, the index of an invariant subspace can be at most countable. In \cite{Borichev1998} and \cite{Niwa2003} it is proven that $H^{\infty}$ with the norm topology contains invariant subspaces of index equal to the cardinality of the interval $[0,1]$. Theorem 1.4 is the Bloch space equivalent of that. It is also known that if $H^{\infty}$ is equipped with the $w^{\ast}$-topology then Beurling's theorem holds, i.e. for every $E \sbs H^{\infty}$  invariant, then $E = \phi H^{\infty}$ for some inner function $\phi$ (\cite{Gamelin2005}). This implies that all invariant subspaces have the index one property, and thus Theorem 1.7 provides a contrasting phenomenon to the situation in $H^{\infty}$.

We remind the reader of two properties that hold in the Bloch space, and will be used in what follows. If we have a function $f \in \BB$ and $F' = f$ then $F \in \BB_0$ and $\no{F}_{\BB} \leq C \no{f}_{\BB}$ for some constant $C$ independent of $f$. The second one, called the ``\textit{division property}'' states that if $f \in \BB$ and $f(\lambda)=0$ then $\frac{f}{z-\lambda} \in \BB$. In a Banach space of analytic functions (see Lemma 1.2) the division operator $R_{\lambda}$, defined on $E_{\lambda} = \{f \in \BB \, : \, f(\lambda) =0 \}$, is bounded for any $\lambda  \in \D$, which is also equivalent to saying that the operator $M_z - \lambda$ is bounded below for any $\lambda \in \D$ (\cite{Richter1987}) . Next is a useful proposition.

\begin{proposition}
Let $M_z : \BB \ra \BB $ be the Shift operator. Then:
\begin{enumerate}
\item $M_z$ is $w^{\ast} - w^{\ast}$ continuous on $\BB$
\item $R_{\lambda}$ is $w^{\ast} - w^{\ast}$ continuous for every $\lambda \in \D$
\end{enumerate}
\end{proposition}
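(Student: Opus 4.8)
The plan is to verify both statements by testing continuity on nets, using the characterization (27) of $w^{\ast}$-convergence: a net converges to $0$ in the $w^{\ast}$-topology of $\BB$ precisely when it converges to $0$ pointwise on $\D$ and is eventually norm-bounded. Since $M_z$ and $R_{\lambda}$ are linear and the relevant $w^{\ast}$-topologies are vector-space topologies, it suffices in each case to check continuity at the origin, i.e. that $f_i \stackrel{w^{\ast}}{\ra} 0$ implies $M_z f_i \stackrel{w^{\ast}}{\ra} 0$, respectively $R_{\lambda} f_i \stackrel{w^{\ast}}{\ra} 0$.

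For (1): assume $f_i \stackrel{w^{\ast}}{\ra} 0$. Pointwise convergence of $M_z f_i$ is immediate, since $(M_z f_i)(z) = z f_i(z) \ra 0$ for each fixed $z \in \D$. For the norm bound I would recall that $M_z$ is a bounded operator on $\BB$: writing $\no{M_z f}_{\BB} = \sup_{|z|<1}(1-|z|^2)|f(z) + zf'(z)|$ and using inequality (1) to control $\sup_{|z|<1}(1-|z|^2)|f(z)| \lesssim \no{f}_{\BB}$, one gets $\no{M_z f}_{\BB} \lesssim \no{f}_{\BB}$. Hence $\limsup_i \no{M_z f_i}_{\BB} < \infty$, and (27) yields $M_z f_i \stackrel{w^{\ast}}{\ra} 0$.

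For (2): first observe that $E_{\lambda} = \ker k_{\lambda}$ is $w^{\ast}$-closed, because $k_{\lambda}$ is $w^{\ast}$-continuous by (27); thus the relative $w^{\ast}$-topology on $E_{\lambda}$ is again a vector-space topology and it is enough to treat nets $f_i \stackrel{w^{\ast}}{\ra} 0$ lying in $E_{\lambda}$. The boundedness of $R_{\lambda}$ established in Section 2 (via the division property and Möbius invariance) gives $\limsup_i \no{R_{\lambda} f_i}_{\BB} \leq \no{R_{\lambda}} \cdot \limsup_i \no{f_i}_{\BB} < \infty$, so it remains to show $(R_{\lambda} f_i)(z) = \frac{f_i(z)}{z-\lambda} \ra 0$ for every $z \in \D$. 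For $z \neq \lambda$ this is clear. The point $z = \lambda$, where $(R_{\lambda} f_i)(\lambda) = f_i'(\lambda)$, is the only genuine obstacle: norm-boundedness of $\{f_i\}$ together with inequality (1) makes $\{f_i\}$ locally uniformly bounded on $\D$, so by Montel's theorem, together with the fact that $0$ is the unique compact-open cluster point (since $f_i \ra 0$ pointwise), one concludes $f_i \ra 0$ uniformly on compact subsets of $\D$; Cauchy's estimate then forces $f_i'(\lambda) \ra 0$. Applying (27) once more gives $R_{\lambda} f_i \stackrel{w^{\ast}}{\ra} 0$, completing the plan. The crux of the argument is therefore this last normal-families step, which is what upgrades pointwise convergence of the $f_i$ to control of their derivatives at the removable singularity $\lambda$.
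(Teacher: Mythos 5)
Your proof is correct and follows essentially the same route as the paper: reduce to nets converging to $0$, check pointwise convergence, and obtain the norm bound from the boundedness of $M_z$ and of $R_{\lambda}$ (the latter established in Section 2). Your normal-families step at the point $z=\lambda$ actually supplies a detail the paper leaves implicit when it says one concludes ``by proceeding as for the shift operator.''
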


%
%
%
%

\begin{proof}
To prove continuity of $M_z$, consider a converging net $f_i \stackrel{w^{\ast}}{\ra} 0$. We need to show that $zf_i \stackrel{w^{\ast}}{\ra} 0$. Pointwise convergence is obvious. Moreover, 

\begin{gather*}
\no{zf_i}_{\BB} = \no{M_zf_i}_{\BB} \leq \no{M_z}\no{f_i}_{\BB}  \,\, \text{so,} \\
\limsup_{i} \no{zf_i}_{\BB} \leq \no{M_z} \limsup_i \no{f_i}_{\BB} < + \infty
\end{gather*}

The two combined guarantee convergence of the net. For the second claim, consider a converging net $f_i \stackrel{w^{\ast}}{\ra} f$ in $E_{\lambda}$. There are functions $g_i, g \in \BB$ such that $(z - \lambda )g_i = f_i$ and $(z- \lambda)g = f$. This implies that $R_{\lambda}f_i = g_i $ and $R_{\lambda}f = g$, and thus it suffices to show that $g_i\stackrel{w^{\ast}}{\ra}g $. This can be deduced by the boundedness of $R_{\lambda}$ and by proceeding as for the shift operator.

\end{proof} 

%
%
%
%

 The above proposition has the following consequence: If $E$ is a $w^{\ast}$-closed subspace of $\BB$, then $M_zE$ is also $w^{\ast}$-closed. This means that it is meaningful to consider the quotient $E / zE$ for an invariant subspace which is $w^{\ast}$-closed, and that will it be a well-defined locally convex space.

\subsection{Extension of Richter's results - Stability of index}

We consider the general situation where $X_0, X, Y$ are Banach spaces of analytic functions satisfying the division property, and that satisfy the following dualities: $X_0^{\ast} \cong Y$ and  $Y^{\ast} \cong X$. We furthermore assume that Proposition 4.1 is true for the space, i.e. $M_z$ and $R_{\lambda}$ are continuous with respect to the $w^{\ast}$-topology in $X$. We will denote by $Lat_{X_0}(M_z)$  the lattice of norm closed, invariant subspaces of $X_0$ and by $Lat_X(M_z,w^{\ast})$ the lattice of $w^{\ast}$-closed, invariant subspaces of $X$. As mentioned above for given $\mathcal{M} \in Lat_X(M_z,w^{\ast})$, quotients of the form $\mathcal{M}/z\mathcal{M}$ make sense under the above assumptions, and the projection operator onto the quotient space is always continuous. The dimension of $\mathcal{M}/z\mathcal{M}$ is the same as that of $\mathcal{M}/(z-{\lambda}\mathcal{M})$ for $\lambda \in \D$, as follows from general properties of the shift operator and Proposition 4.1. If $A$ is any subset of $X$, we denote by $\overline{A}^{w^{\ast}}$ the $w^{\ast}$-closure of $A$ in $X$. This coincides with all the $w^{\ast}$-limits of nets in $A$.  If $f \in X$, we will write $[f]_{\ast}$ for the  $w^{\ast}$-closed, invariant subspace generated by $f$. Moreover, when writing $\mathcal{M} \vee \mathcal{N}$ for $\mathcal{M}, \mathcal{N} \in Lat_X(M_z,w^{\ast})$ we will mean the smallest $w^{\ast}$-closed invariant subspace containing $\mathcal{M} + \mathcal{N}$. Finally, for given $\mathcal{M} \in Lat_X(M_z,w^{\ast})$ we define $\mathcal{Z}(\mathcal{M}) = \{ \lambda \in \D \, | \, f(\lambda) = 0 \, , \, \fa f \in \mathcal{M} \}$. 

\begin{proposition}
Let $\mathcal{M}, \mathcal{N} \in Lat_X(M_z,w^{\ast})$. Then:
\begin{enumerate}
\item $\text{ind}(\mathcal{M} \vee \mathcal{N}) \leq \text{ind}(\mathcal{M}) + \text{ind}(\mathcal{N})$,
\item If  $\text{ind}(\mathcal{M}) = m \geq 2$, with $m$ finite, and $ n_1 +n_2 = m$ then there exist $\mathcal{N}_1, \mathcal{N}_2 \in Lat_X(M_z,w^{\ast})$, $\mathcal{N}_1, \mathcal{N}_2 \sbs \mathcal{M}$, such that $\text{ind}(\mathcal{N}_i) = n_i$ and $\text{ind}(\mathcal{N}_1 \vee \mathcal{N}_2) = m$.
\end{enumerate}
\end{proposition}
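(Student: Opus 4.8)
The plan is to transport Richter's norm-topology arguments to the $w^{\ast}$-setting; the one structural input that makes this work is Proposition 4.1 (which holds in $X$ by our standing assumption), from which it follows that $z\mathcal{M}$, $z\mathcal{N}$ and $z(\mathcal{M}\vee\mathcal{N})$ are again $w^{\ast}$-closed, so the quotients $\mathcal{M}/z\mathcal{M}$, $\mathcal{N}/z\mathcal{N}$ and $(\mathcal{M}\vee\mathcal{N})/z(\mathcal{M}\vee\mathcal{N})$ are Hausdorff topological vector spaces with $w^{\ast}$-continuous quotient maps; I shall also use repeatedly that $\mathcal{M}\vee\mathcal{N}=\overline{\mathcal{M}+\mathcal{N}}^{w^{\ast}}$, since $\mathcal{M}+\mathcal{N}$ is an invariant (if not closed) subspace and $M_z$ is $w^{\ast}$-continuous. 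For (1), the inequality is vacuous if $\text{ind}(\mathcal{M})$ or $\text{ind}(\mathcal{N})$ is infinite, so assume $a=\text{ind}(\mathcal{M})$ and $b=\text{ind}(\mathcal{N})$ are finite, set $W=\mathcal{M}\vee\mathcal{N}$, and let $\pi\colon W\to Q:=W/zW$ be the quotient map, with $Q$ Hausdorff. Since $\pi|_{\mathcal{M}}$ annihilates $z\mathcal{M}$, the space $\pi(\mathcal{M})$ is a linear image of $\mathcal{M}/z\mathcal{M}$, so $\dim\pi(\mathcal{M})\le a$, and likewise $\dim\pi(\mathcal{N})\le b$; hence $F:=\pi(\mathcal{M})+\pi(\mathcal{N})=\pi(\mathcal{M}+\mathcal{N})$ satisfies $\dim F\le a+b$. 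As $\mathcal{M}+\mathcal{N}$ is $w^{\ast}$-dense in $W$ and $\pi$ is continuous and surjective, $F$ is dense in $Q$; being finite-dimensional it is also closed in the Hausdorff space $Q$, so $F=Q$ and $\text{ind}(W)=\dim Q\le a+b$. By iteration the same bound holds for finite joins.

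For (2) I would first isolate two routine facts. First, $\text{ind}([g]_{\ast})=1$ for every nonzero $g\in X$: polynomial multiples of $g$ are $w^{\ast}$-dense in $[g]_{\ast}$, and decomposing $p=p(0)+z\widetilde p$ shows $pg+z[g]_{\ast}=p(0)\bigl(g+z[g]_{\ast}\bigr)$, so $[g]_{\ast}/z[g]_{\ast}$ is the closure of a line, hence that line itself (being Hausdorff and finite-dimensional); after factoring $g=z^{k}h$ with $h(0)\neq 0$ — which alters neither the dimension of $[g]_{\ast}/z[g]_{\ast}$, by the shift-invariance of the index coming from Proposition 4.1 — the class of $g$ is nonzero, because $w^{\ast}$-convergence forces pointwise convergence and hence every element of $z[g]_{\ast}$ vanishes at the origin. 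Second, if $\mathcal{N}\subseteq\mathcal{M}$ are $w^{\ast}$-closed invariant and $h_{1},\dots,h_{k}\in\mathcal{N}$ have linearly independent classes in $\mathcal{M}/z\mathcal{M}$, then those classes are already independent in $\mathcal{N}/z\mathcal{N}$: the inclusion $z\mathcal{N}\subseteq z\mathcal{M}$ induces a linear map $\mathcal{N}/z\mathcal{N}\to\mathcal{M}/z\mathcal{M}$ sending $h_{i}+z\mathcal{N}\mapsto h_{i}+z\mathcal{M}$, under which a dependence upstairs would descend to one downstairs; consequently $\text{ind}(\mathcal{N})\ge k$.

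Now fix $f_{1},\dots,f_{m}\in\mathcal{M}$ whose classes form a basis of $\mathcal{M}/z\mathcal{M}$ (possible as $m<\infty$) and put $\mathcal{N}_{1}=\bigvee_{i=1}^{n_{1}}[f_{i}]_{\ast}$ and $\mathcal{N}_{2}=\bigvee_{i=n_{1}+1}^{m}[f_{i}]_{\ast}$, both $w^{\ast}$-closed invariant subspaces of $\mathcal{M}$. By the first fact together with part (1), $\text{ind}(\mathcal{N}_{1})\le n_{1}$ and $\text{ind}(\mathcal{N}_{2})\le n_{2}$; by the second fact, $\text{ind}(\mathcal{N}_{j})\ge n_{j}$; so $\text{ind}(\mathcal{N}_{j})=n_{j}$. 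Finally $\mathcal{N}_{1}\vee\mathcal{N}_{2}=\bigvee_{i=1}^{m}[f_{i}]_{\ast}$ — it is the smallest $w^{\ast}$-closed invariant subspace containing all the $f_{i}$ — and its index is at most $m$ by part (1) and at least $m$ by the second fact applied to all $m$ classes, hence equal to $m=\text{ind}(\mathcal{M})$.

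The only point that really demands attention — and the reason this is not a mere transcription of the norm case — is the step in part (1) that a dense finite-dimensional subspace of $Q=W/zW$ exhausts $Q$; this needs $Q$ to be Hausdorff, i.e. $zW$ to be $w^{\ast}$-closed, which is precisely what Proposition 4.1 supplies. I would also flag the natural-looking shortcut one should \emph{not} attempt, namely proving $\bigvee_{i=1}^{m}[f_{i}]_{\ast}=\mathcal{M}$: a singly generated $w^{\ast}$-closed invariant subspace $[g]_{\ast}$ can be a proper part of $\mathcal{M}$ even when the class of $g$ already spans $\mathcal{M}/z\mathcal{M}$ (as happens for Beurling-type subspaces of $H^{\infty}$), so the argument must settle for the index identity $\text{ind}(\mathcal{N}_{1}\vee\mathcal{N}_{2})=m$ rather than an equality of subspaces — which, fortunately, is exactly what the statement asks for.
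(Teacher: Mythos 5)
Your proof is correct and follows essentially the same route as the paper: part (1) rests on exactly the paper's key observation that the image of $\mathcal{M}+\mathcal{N}$ in the Hausdorff quotient $(\mathcal{M}\vee\mathcal{N})/z(\mathcal{M}\vee\mathcal{N})$ is a dense finite-dimensional, hence closed, subspace. For part (2), which the paper dispatches with ``a similar argument,'' your explicit construction --- splitting a basis of $\mathcal{M}/z\mathcal{M}$ and taking joins of the cyclic subspaces $[f_i]_{\ast}$, with the two auxiliary facts ($\text{ind}[g]_{\ast}=1$ and the injectivity of $\mathcal{N}/z\mathcal{N}\to\mathcal{M}/z\mathcal{M}$ on the given classes) --- is the standard Richter-type argument the paper intends, and it is carried out correctly.
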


%
%
%
%

\begin{proof}
For the first implication, if either $\mathcal{M}$ or $\mathcal{N}$ have infinite index, then we have nothing to show, so assume the index of both is finite. In that case there exist $\mathcal{M}_1 \sbs \mathcal{M}$ and $\mathcal{N}_1 \sbs \mathcal{N}$, finite dimensional subspaces, such that:

\begin{gather*}
\mathcal{M} = z\mathcal{M} + \mathcal{M}_1  \,\, , \,\, \mathcal{N} = z\mathcal{N} + \mathcal{N}_1 \,\, \text{and}, \\
\text{ind}(\mathcal{M}) = \dim(\mathcal{M}_1) \,\, , \,\, \text{ind}(\mathcal{N}) = \dim(\mathcal{N}_1).
\end{gather*}

Then 

\begin{gather*}
\mathcal{M} + \mathcal{N} = z(\mathcal{M}+ \mathcal{N}) + (\mathcal{M}_1 + \mathcal{N}_1) \sbs z(\mathcal{M} \vee \mathcal{N}) + (\mathcal{M}_1 + \mathcal{N}_1) \sbs \mathcal{M} \vee \mathcal{N}.
\end{gather*}

Since $\mathcal{M}_1 + \mathcal{N}_1$ is finite dimensional, it is $w^{\ast} $-closed. The space $z(\mathcal{M} \vee \mathcal{N}) + (\mathcal{M}_1 + \mathcal{N}_1)$ is then also $w^{\ast}$-closed. Indeed, consider the natural projection of $X$ onto the quotient $X / z(\mathcal{M} \vee \mathcal{N})$. Consider a base $h_1, \ldots, h_n$ of $\mathcal{M}_1 + \mathcal{N}_1$. That map is well defined because $z(\mathcal{M} \vee \mathcal{N})$ is $w^{\ast}$-closed and continuous. The space spanned by $Ph_1, \ldots, Ph_n$ is finite dimensional, thus closed in the quotient topology which is also locally convex. But looking at the inverse image we see that:

\begin{gather*}
P^{-1}(\text{span}\{Ph_1,\ldots, Ph_n \}) = z(\mathcal{M} \vee \mathcal{N}) + (\mathcal{M}_1 + \mathcal{N}_1).
\end{gather*}

By the continuity of $P$ we deduce that this space is $w^{\ast}$-closed. Since $\mathcal{M} + \mathcal{N}$ is $w^{\ast}$-dense in $\mathcal{M}\vee \mathcal{N}$ we get from the above inclusions that

\begin{gather*}
z(\mathcal{M} \vee \mathcal{N}) + (\mathcal{M}_1 + \mathcal{N}_1) = \mathcal{M} \vee \mathcal{N},
\end{gather*}

and so,
 
\begin{gather*}
\text{ind}(\mathcal{M}\vee \mathcal{N}) = \dim(\mathcal{M}_1 + \mathcal{N}_1) \leq \dim(\mathcal{M}_1) + \dim(\mathcal{N}_1) = \text{ind}(\mathcal{M}) + \text{ind}(\mathcal{N}).
\end{gather*}
 
To prove the second implication we use a similar argument.
 
\end{proof}

%
%
%
%

\begin{proposition}
Let $\mathcal{M} \in Lat_X(M_z,w^{\ast})$ and $\lambda \notin \mathcal{Z}(\mathcal{M})$. The following are equivalent:

\begin{enumerate}
\item $\text{ind}(\mathcal{M})=1$,
\item If $f\in \mathcal{M}$ such that $f(\lambda)=0$ then there exists some $h\in \mathcal{M}$ such that $(z-\lambda)h = f$,
\item If $(z-\lambda)h = f \in \mathcal{M}$ for some $h \in X$  then $h \in \mathcal{M}$.
\end{enumerate}

\end{proposition}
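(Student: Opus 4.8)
The plan is to prove the chain of implications $(1)\Rightarrow(2)\Rightarrow(3)\Rightarrow(1)$, mimicking Richter's classical argument but being careful to use only the $w^{\ast}$-topological tools established in Proposition 4.1 and in the preceding discussion (namely that $M_z-\lambda$ and $R_\lambda$ are $w^{\ast}$-continuous, that $(z-\lambda)\mathcal{M}$ is $w^{\ast}$-closed, and that the quotient $\mathcal{M}/(z-\lambda)\mathcal{M}$ is a well-defined locally convex space of dimension $\operatorname{ind}(\mathcal{M})$).

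\medskip

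\textbf{$(1)\Rightarrow(2)$.} Fix $g\in\mathcal{M}$ with $g(\lambda)\neq 0$; such a $g$ exists since $\lambda\notin\mathcal{Z}(\mathcal{M})$. Given $f\in\mathcal{M}$ with $f(\lambda)=0$, I want to show $f\in (z-\lambda)\mathcal{M}$. The key observation is that $(z-\lambda)\mathcal{M}+\mathbb{C}g$ is $w^{\ast}$-closed and contained in $\mathcal{M}$: indeed $(z-\lambda)\mathcal{M}$ is $w^{\ast}$-closed by the remark following Proposition 4.1, and adding a one-dimensional space preserves $w^{\ast}$-closedness by exactly the projection-onto-the-quotient argument used in the proof of Proposition 4.2(1) (push everything into $X/(z-\lambda)\mathcal{M}$, note the image of $g$ spans a one-dimensional hence closed subspace, and pull back via the continuous projection). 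Since $\operatorname{ind}(\mathcal{M})=\dim\big(\mathcal{M}/(z-\lambda)\mathcal{M}\big)=1$ and $g\notin(z-\lambda)\mathcal{M}$ (because $g(\lambda)\neq0$ while every element of $(z-\lambda)\mathcal{M}$ vanishes at $\lambda$), the class of $g$ spans the quotient, so $(z-\lambda)\mathcal{M}+\mathbb{C}g=\mathcal{M}$. Hence $f=(z-\lambda)h+cg$ for some $h\in\mathcal{M}$, $c\in\mathbb{C}$; evaluating at $\lambda$ gives $0=cg(\lambda)$, so $c=0$ and $f=(z-\lambda)h$ with $h\in\mathcal{M}$.

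\medskip

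\textbf{$(2)\Rightarrow(3)$.} Suppose $(z-\lambda)h=f\in\mathcal{M}$ with $h\in X$. Then $f(\lambda)=0$, so by $(2)$ there is $h'\in\mathcal{M}$ with $(z-\lambda)h'=f$. Since $M_z-\lambda$ is bounded below on $X$ (the division property, as recalled in the excerpt), it is injective, so $h=h'\in\mathcal{M}$.

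\medskip

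\textbf{$(3)\Rightarrow(1)$.} I argue by contradiction: suppose $\operatorname{ind}(\mathcal{M})\geq 2$. Pick $g\in\mathcal{M}$ with $g(\lambda)\neq 0$ (again using $\lambda\notin\mathcal{Z}(\mathcal{M})$) and pick $f\in\mathcal{M}$ whose class in $\mathcal{M}/(z-\lambda)\mathcal{M}$ is linearly independent from that of $g$. Replacing $f$ by $f-\frac{f(\lambda)}{g(\lambda)}g$ we may assume $f(\lambda)=0$ while the class of $f$ is still nonzero, i.e. $f\notin(z-\lambda)\mathcal{M}$. Write $f=(z-\lambda)h$ with $h\in X$ (possible by the division property in $X$, since $f(\lambda)=0$). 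By $(3)$ we get $h\in\mathcal{M}$, hence $f\in(z-\lambda)\mathcal{M}$, a contradiction. Therefore $\operatorname{ind}(\mathcal{M})\leq 1$; and $\operatorname{ind}(\mathcal{M})\geq 1$ holds because $\mathcal{M}\neq\{0\}$ (it contains $g$) and $M_z-\lambda$ is bounded below, so it cannot be onto $\mathcal{M}$ — indeed if $(z-\lambda)\mathcal{M}=\mathcal{M}$ one could, using that $(z-\lambda)\mathcal{M}$ is $w^{\ast}$-closed, iterate to show every element of $\mathcal{M}$ vanishes to infinite order at $\lambda$, forcing $\mathcal{M}=\{0\}$. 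Hence $\operatorname{ind}(\mathcal{M})=1$.

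\medskip

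The main obstacle I anticipate is not the algebra — which is essentially Richter's — but making sure every ``closed subspace plus finite-dimensional piece is closed'' and ``quotient by $(z-\lambda)\mathcal{M}$ makes sense'' step is legitimate in the $w^{\ast}$-topology rather than in a norm; this is exactly where Proposition 4.1 and the projection argument from Proposition 4.2 are indispensable, and one must also invoke that $\mathcal{M}/(z-\lambda)\mathcal{M}$ and $\mathcal{M}/z\mathcal{M}$ have the same dimension (the $\lambda$-shift-invariance of the index, noted in the excerpt). A secondary point to handle cleanly is the lower bound $\operatorname{ind}(\mathcal{M})\geq1$ for nonzero $\mathcal{M}$, which rests on $M_z-\lambda$ being bounded below together with $w^{\ast}$-closedness of its range on $\mathcal{M}$.
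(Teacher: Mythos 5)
Your proof is correct and follows essentially the same route as the paper's: the substance in every implication is the same combination of evaluation at $\lambda$, the division property of $X$, and linear algebra in the quotient $\mathcal{M}/(z-\lambda)\mathcal{M}$, merely reorganized as a cycle $(1)\Rightarrow(2)\Rightarrow(3)\Rightarrow(1)$ instead of the paper's $(2)\Leftrightarrow(3)$ plus $(1)\Leftrightarrow(3)$. The extra topological scaffolding you invoke (w$^{\ast}$-closedness of $(z-\lambda)\mathcal{M}+\mathbb{C}g$, the iteration for $\operatorname{ind}(\mathcal{M})\geq 1$) is harmless but unnecessary, since the quotient-dimension statements are purely algebraic and $(z-\lambda)\mathcal{M}\neq\mathcal{M}$ follows at once from $g(\lambda)\neq 0$.
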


%
%
%
%

\begin{proof}
The equivalence of \textit{(2)} and \textit{(3)} is elementary. We will first prove that \textit{(1)} implies \textit{(3)}. 
\vspace{5mm}

Let $(z- \lambda)h \in \mathcal{M}$ for some $h \in X$, and suppose that $h \notin \mathcal{M}$. The function $f:= (z-\lambda)h \in \mathcal{M}$ satisfies $f(\lambda)=0$ but $f \notin (z-\lambda)\mathcal{M}$. Hence the equivalence class $\bar{f} \in \mathcal{M}/(z-\lambda)\mathcal{M}$ is non zero. Since $\lambda \notin \mathcal{Z}(\mathcal{M})$ there exists some function $g \in \mathcal{M}$ such that $g(\lambda) \neq 0$, which also means that $g \notin (z-\lambda)\mathcal{M}$ so $\bar{g} \neq 0$. Since $\text{ind}(\mathcal{M})=1$ there exists $\mu \in \C \backslash \{ 0\}$ such that $\bar{g} = \mu \bar{f}$. That means precisely that $g  \in \mu f + (z-\lambda)\mathcal{M}$. By evaluating at $z=\lambda$ we get $g(0) = \mu f(\lambda) +0 = 0$ which is contradictory. 

\vspace{3mm}

To prove that \textit{(3)} implies \textit{(1)}, suppose that whenever $(z-\lambda)h \in \mathcal{M}$ for some $h \in X$, we have that $h \in \mathcal{M}$. Consider $f,g \in \mathcal{M}$ and their respective equivalence classes, $\bar{f},\bar{g} \in \mathcal{M}/(z-\lambda)\mathcal{M}$. We need to show that they are linearly dependent as vectors and that way conclude that the dimension of the quotient is in fact equal to one. If either $\bar{f}$ or $\bar{g}$ are zero then there is nothing to show, so we may assume neither of them are. That in particular means that, thanks to the hypothesis, that $f(\lambda),g(\lambda)\neq 0$. Consider the function $g_0(z) = g(z) \cdot \frac{f(\lambda)}{g(\lambda)} \in \mathcal{M}$. Then $f-g_0 \in \mathcal{M}$ and $f(\lambda) - g_0(\lambda) = 0$. Therefore we can write $f-g_0 = (z-\lambda)h$ for some $h \in X$, and by the hypothesis that means that $h \in \mathcal{M}$ and thus we conclude that $f-g_0 \in (z-\lambda)\mathcal{M}$. Therefore $\overline{f-g_0} = 0 \Ra \bar{f}-\bar{g_0} =0 \Ra \bar{f} = \bar{g_0} \Ra \bar{f} = \frac{f(\lambda)}{g(\lambda)} \bar{g} $, and thus the equivalence classes of $f$ and $g$ are linearly dependent.
\end{proof}

%
%
%
%

\begin{proposition}
Let $\mathcal{M} \in Lat_X(M_z,w^{\ast})$ and $\lambda \notin \mathcal{Z}(\mathcal{M})$. The following are equivalent:

\begin{enumerate}
\item $\text{ind}(\mathcal{M})=1$,
\item There exists a (not necessarily closed) subspace $L \sbs \mathcal{M}$ such that $\overline{L}^{w^{\ast}} = \mathcal{M}$, with the properties that $\lambda \notin \mathcal{Z}(L)$ and $(z-\lambda)h \in L $ for some $h \in X$ implies $h \in \mathcal{M}$.
\end{enumerate}
\end{proposition}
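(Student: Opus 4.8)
The plan is to prove both implications, using Proposition 4.3 as the bridge. The implication $(2) \Rightarrow (1)$ is the easier direction: given such an $L$, I would show that condition $(3)$ of Proposition 4.3 holds for $\mathcal{M}$, which immediately yields $\text{ind}(\mathcal{M}) = 1$. So suppose $(z-\lambda)h = f \in \mathcal{M}$ for some $h \in X$; I must produce $h \in \mathcal{M}$. The difficulty is that $f$ need not lie in $L$, only in its weak-star closure, so the hypothesis on $L$ cannot be applied directly to $f$. The idea is to approximate: write $f$ as a weak-star limit of a net $f_i \in L$. These $f_i$ need not vanish at $\lambda$, so I would correct them using a function $g \in L$ with $g(\lambda) \neq 0$ (which exists since $\lambda \notin \mathcal{Z}(L)$), setting $\tilde f_i = f_i - \frac{f_i(\lambda)}{g(\lambda)} g \in L$. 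Then $\tilde f_i(\lambda) = 0$, and since $f_i \to f$ weak-star implies $f_i(\lambda) \to f(\lambda) = 0$ (point evaluations are weak-star continuous), we get $\tilde f_i \to f$ weak-star as well. Now $\tilde f_i = (z-\lambda) h_i$ with $h_i = R_\lambda \tilde f_i \in \mathcal{M}$ by the hypothesis on $L$. By the weak-star continuity of $R_\lambda$ (Proposition 4.1), $h_i \to R_\lambda f$ weak-star; but $R_\lambda f$ is exactly the $h$ with $(z-\lambda)h = f$, and since $\mathcal{M}$ is weak-star closed and $h_i \in \mathcal{M}$, we conclude $h = R_\lambda f \in \mathcal{M}$. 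This verifies $(3)$ of Proposition 4.3.

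For the converse $(1) \Rightarrow (2)$, I would simply take $L = \mathcal{M}$ itself. Then $\overline{L}^{w^\ast} = \mathcal{M}$ trivially, $\lambda \notin \mathcal{Z}(L) = \mathcal{Z}(\mathcal{M})$ by assumption, and the property ``$(z-\lambda)h \in L$ for $h \in X$ implies $h \in \mathcal{M}$'' is precisely condition $(3)$ of Proposition 4.3, which holds because $\text{ind}(\mathcal{M}) = 1$. This direction is essentially immediate once Proposition 4.3 is available; the content of Proposition 4.4 over Proposition 4.3 is that one only needs to \emph{verify} the division-type property on a weak-star dense subspace, which is the useful form for later applications where one constructs $\mathcal{M}$ as the weak-star closure of an explicit (non-closed) space of functions.

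The main obstacle I anticipate is the approximation step in $(2) \Rightarrow (1)$: one must be careful that the correction $\tilde f_i$ stays inside $L$ (it does, since $L$ is a subspace and $f_i, g \in L$) and that the net $\{h_i\}$ is one to which weak-star continuity of $R_\lambda$ genuinely applies — i.e. that $\tilde f_i$ actually converges to $f$ and all $\tilde f_i$ vanish at $\lambda$ so that $R_\lambda$ is defined on them. The key enabling facts are that point evaluations $k_\lambda$ are weak-star continuous on $X$ (they are weak-star continuous linear functionals, being elements of the predual $Y$, after identifying $k_\lambda$ with the appropriate reproducing element — or more simply, bounded and weak-star continuous by the net characterization of weak-star convergence, e.g. equation (27) in the Bloch case) and that $R_\lambda$ is $w^\ast$–$w^\ast$ continuous, which is exactly the standing assumption that Proposition 4.1 holds for $X$. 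Once these are in hand the argument is a routine diagram chase through the quotient, and no delicate estimates are needed.
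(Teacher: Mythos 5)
Your proposal is correct and follows essentially the same route as the paper: $(1)\Rightarrow(2)$ by taking $L=\mathcal{M}$ and invoking Proposition 4.3, and $(2)\Rightarrow(1)$ by verifying condition $(3)$ of Proposition 4.3 via a weak-star approximating net in $L$, the same correction $f_i - \frac{f_i(\lambda)}{g(\lambda)}g$ to force vanishing at $\lambda$, and the $w^{\ast}$-continuity of $k_\lambda$ and $R_\lambda$ together with the $w^{\ast}$-closedness of $\mathcal{M}$. No gaps; this matches the paper's argument.
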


%
%
%
%

\begin{proof}
Proving that \textit{(1)} implies \textit{(2)} is achieved by simply taking $L = \mathcal{M}$ and applying Proposition 4.3. 

\vspace{3mm}

To prove the converse, we will verify condition \textit{(3)} of Proposition 4.3. Let $(z-\lambda)h \in \mathcal{M}$ for some $h \in X$. Since $\overline{L}^{w^{\ast}} = \mathcal{M}$, there exists a net $\{ f_i\}_{i\in I} \sbs L$ such that $f_i \stackrel{w^{\ast}}{\ra} (z-\lambda)h $. In particular by continuity of the evaluation functionals we have that $f_i(\lambda) = k_{\lambda}(f_i) \ra k_{\lambda}((z-\lambda)h) = 0$. Since $\lambda \notin \mathcal{Z}(L)$ we can find a $g \in L$ such that $g(\lambda)\neq 0$. For every $i \in I$ we consider the function 

\begin{equation*}
g_i(z) = f_i(z) - \frac{f_i(\lambda)}{g(\lambda)}g(z) \in L.
\end{equation*}

This new net $\{ g_i \}_{i\in I}$ has $g_i(\lambda)=0 $ for all $i \in I$. That means that there is a net $\{ h_i \}_{i \in I} \sbs X$ such that $g_i = (z-\lambda)h_i$. But by the hypothesis this means that every $h_i \in \mathcal{M}$ for all $i \in I$. Since $f_i \stackrel{w^{\ast}}{\ra}(z-\lambda)h$ we get that  $g_i \stackrel{w^{\ast}}{\ra}(z-\lambda)h$ and hence $(z-\lambda)h_i \stackrel{w^{\ast}}{\ra} (z-\lambda)h$. By continuity of the division operator we get $h_i \stackrel{w^{\ast}}{\ra}h$. Since $h_i \in \mathcal{M}$ for all $i \in I$, and $ \mathcal{M}$ is $w^{\ast}$-closed, we obtain that $h \in \mathcal{M}$. Condition \textit{(3)} is therefore satisfied and $\text{ind}(\mathcal{M})=1$.
\end{proof}

%
%
%
%

\begin{corollary}
Let $f \in X, f \neq 0$. Then $\text{ind}[f]_{\ast} = 1$.
\end{corollary}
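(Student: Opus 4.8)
The plan is to deduce the corollary from Proposition 4.4 applied to the $w^{\ast}$-closed invariant subspace $\mathcal{M} = [f]_{\ast}$. Since $f \neq 0$ and $f \in X \sbs \mathrm{Hol}(\D)$, its zeros are isolated, so I would fix a point $\lambda \in \D$ with $f(\lambda) \neq 0$; because $f \in [f]_{\ast}$, this gives $\lambda \notin \mathcal{Z}([f]_{\ast})$, which is the standing hypothesis of Proposition 4.4. As the (non-closed) dense subspace demanded by part (2) of that proposition I would take $L = \{ pf : p \text{ a polynomial} \}$, the algebraic invariant subspace generated by $f$; it is a linear subspace of $[f]_{\ast}$, and $f(\lambda) \neq 0$ forces $\lambda \notin \mathcal{Z}(L)$.

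The next step is to verify $\overline{L}^{w^{\ast}} = [f]_{\ast}$. By Proposition 4.1, $M_z$ is $w^{\ast}$-$w^{\ast}$ continuous, so the $w^{\ast}$-closure of an invariant subspace is again invariant; hence $\overline{L}^{w^{\ast}}$ is a $w^{\ast}$-closed invariant subspace containing $f$, and therefore contains $[f]_{\ast}$. Conversely, any $w^{\ast}$-closed invariant subspace containing $f$ contains every polynomial multiple $pf$, hence contains $L$ and its $w^{\ast}$-closure; taking this subspace to be $[f]_{\ast}$ gives the reverse inclusion.

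The only point with any content is the implication required by Proposition 4.4(2): if $(z-\lambda)h \in L$ for some $h \in X$, then $h \in [f]_{\ast}$. Suppose $(z-\lambda)h = pf$ for a polynomial $p$. Evaluating at $z = \lambda$ gives $0 = p(\lambda)f(\lambda)$, and since $f(\lambda) \neq 0$ we conclude $p(\lambda) = 0$, so $p(z) = (z-\lambda)q(z)$ for some polynomial $q$. Then $(z-\lambda)h = (z-\lambda)qf$ as elements of $\mathrm{Hol}(\D)$, and cancelling the factor $z-\lambda$ (valid on $\D \setminus \{\lambda\}$, then extending by continuity) yields $h = qf \in L \sbs [f]_{\ast}$. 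With all the hypotheses of Proposition 4.4(2) checked, the equivalence with Proposition 4.4(1) gives $\text{ind}([f]_{\ast}) = 1$.

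I do not expect a serious obstacle: once Proposition 4.4 is in hand the corollary is essentially bookkeeping. The two places requiring a little care are the identification $\overline{L}^{w^{\ast}} = [f]_{\ast}$, which leans on the $w^{\ast}$-continuity of $M_z$ from Proposition 4.1, and the cancellation step producing $h = qf$, which uses only that $X \sbs \mathrm{Hol}(\D)$ and that multiplication by $z - \lambda$ is injective on analytic functions — not the stronger bounded division property.
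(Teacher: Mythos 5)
Your proposal is correct and follows essentially the same route as the paper: the paper's proof likewise picks $\lambda$ with $f(\lambda)\neq 0$ and verifies condition (2) of Proposition 4.4 with $L=\{pf : p \text{ polynomial}\}$, and your expanded verification (the density $\overline{L}^{w^{\ast}}=[f]_{\ast}$ and the factorization $p=(z-\lambda)q$ giving $h=qf$) is exactly the bookkeeping the paper leaves implicit.
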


%
%
%
%

\begin{proof}
Since $f \neq 0$ there is some $\lambda \in \D$ such that $f(\lambda) \neq 0$. It suffices then to verify condition (2) of Proposition 4.4 by taking $L = \{pf \, : \, p \,\,\text{polynomial} \}$.
\end{proof}

%
%
%
%

\begin{theorem}
Let $\mathcal{M}_1 , \mathcal{M}_2 \in Lat_X(M_z,w^{\ast})$  have the index one property, and let $\lambda \notin \mathcal{Z}(\mathcal{M}_1) \cup\mathcal{Z}(\mathcal{M}_2) $. The following are equivalent:

\begin{enumerate}
\item $\text{ind}(\mathcal{M}_1 \vee \mathcal{M}_2)=1$,
\item There exist nets $\{g_i^1 \}_{i \in I} \sbs \mathcal{M}_1 $,$\{g_i^2 \}_{i \in I} \sbs \mathcal{M}_2 $ such that $g_i^1(\lambda) = g_i^2(\lambda) =1$ and $g_i^1 - g_i^2 \stackrel{w^{\ast}}{\ra} 0$.
\end{enumerate}
\end{theorem}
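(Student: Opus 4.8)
The plan is to derive both implications from the intrinsic characterizations of the index-one property established in Propositions 4.3 and 4.4, using throughout the $w^{\ast}$-continuity of $M_z$ and of the division operators $R_{\lambda}$ on $X$ (Proposition 4.1). A preliminary remark I would record first is that $\mathcal{M}_1 \vee \mathcal{M}_2 = \overline{\mathcal{M}_1 + \mathcal{M}_2}^{w^{\ast}}$: the algebraic sum $\mathcal{M}_1 + \mathcal{M}_2$ is already an invariant subspace, and $w^{\ast}$-continuity of $M_z$ keeps its $w^{\ast}$-closure invariant, so the latter is exactly the smallest $w^{\ast}$-closed invariant subspace containing $\mathcal{M}_1 + \mathcal{M}_2$. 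Also $\lambda \notin \mathcal{Z}(\mathcal{M}_1 \vee \mathcal{M}_2)$.

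To prove \textit{(1)} $\Ra$ \textit{(2)}, I would fix $f_1 \in \mathcal{M}_1$ and $f_2 \in \mathcal{M}_2$ with $f_1(\lambda) = f_2(\lambda) = 1$ (possible after scaling, since $\lambda \notin \mathcal{Z}(\mathcal{M}_j)$). Then $f_1 - f_2 \in \mathcal{M}_1 \vee \mathcal{M}_2$ vanishes at $\lambda$, so the index-one hypothesis together with Proposition 4.3 produces $h \in \mathcal{M}_1 \vee \mathcal{M}_2$ with $(z-\lambda)h = f_1 - f_2$. Writing $h$ as the $w^{\ast}$-limit of a net $v_i^1 + v_i^2$ with $v_i^j \in \mathcal{M}_j$, I would set $g_i^1 = f_1 - (z-\lambda)v_i^1 \in \mathcal{M}_1$ and $g_i^2 = f_2 + (z-\lambda)v_i^2 \in \mathcal{M}_2$; these satisfy $g_i^1(\lambda) = g_i^2(\lambda) = 1$, while $g_i^1 - g_i^2 = (z-\lambda)(h - v_i^1 - v_i^2) \stackrel{w^{\ast}}{\ra} 0$ by $w^{\ast}$-continuity of $M_z - \lambda$.

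For \textit{(2)} $\Ra$ \textit{(1)} I would verify condition \textit{(2)} of Proposition 4.4 for $\mathcal{M} = \mathcal{M}_1 \vee \mathcal{M}_2$ with the (generally non-closed) subspace $L = \mathcal{M}_1 + \mathcal{M}_2$, which is invariant, $w^{\ast}$-dense in $\mathcal{M}_1 \vee \mathcal{M}_2$, and has $\lambda \notin \mathcal{Z}(L)$. So suppose $(z-\lambda)h = \phi_1 + \phi_2$ with $h \in X$ and $\phi_j \in \mathcal{M}_j$; evaluating at $\lambda$ gives $\phi_1(\lambda) = -\phi_2(\lambda) =: c$. If $c = 0$, then each $\phi_j(\lambda) = 0$, the index-one property of $\mathcal{M}_j$ (Proposition 4.3) divides out the zero inside $\mathcal{M}_j$, and injectivity of $M_z - \lambda$ (it is bounded below) forces $h \in \mathcal{M}_1 + \mathcal{M}_2$. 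If $c \neq 0$, I rescale $(h,\phi_1,\phi_2)$ to assume $c = 1$; then $\phi_1 - g_i^1 \in \mathcal{M}_1$ and $\phi_2 + g_i^2 \in \mathcal{M}_2$ both vanish at $\lambda$, so Proposition 4.3 gives $\psi_i^1 \in \mathcal{M}_1$ and $\psi_i^2 \in \mathcal{M}_2$ with $(z-\lambda)\psi_i^1 = \phi_1 - g_i^1$ and $(z-\lambda)\psi_i^2 = \phi_2 + g_i^2$. Adding these, $(z-\lambda)(\psi_i^1 + \psi_i^2 - h) = g_i^2 - g_i^1$, a net lying in $E_{\lambda} = \{ f \in X : f(\lambda) = 0 \}$ and tending $w^{\ast}$ to $0$, so the $w^{\ast}$-continuity of $R_{\lambda}$ yields $\psi_i^1 + \psi_i^2 \stackrel{w^{\ast}}{\ra} h$; since $\psi_i^1 + \psi_i^2 \in \mathcal{M}_1 + \mathcal{M}_2 \sbs \mathcal{M}_1 \vee \mathcal{M}_2$ and the latter is $w^{\ast}$-closed, $h \in \mathcal{M}_1 \vee \mathcal{M}_2$. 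Proposition 4.4 then gives $\text{ind}(\mathcal{M}_1 \vee \mathcal{M}_2) = 1$.

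I expect the main difficulty to be bookkeeping rather than conceptual: one must make sure that every function to which $R_{\lambda}$ is applied genuinely lies in $E_{\lambda}$, and exploit that $M_z - \lambda$ is bounded below (hence injective) so that the ``quotient'' $h$ in each relation $(z-\lambda)h = \cdots$ is uniquely determined and $R_{\lambda}$-continuity may be invoked. The one structural choice is to recognize $L = \mathcal{M}_1 + \mathcal{M}_2$ as the correct $w^{\ast}$-dense subspace to feed into Proposition 4.4, and to check that $\overline{L}^{w^{\ast}} = \mathcal{M}_1 \vee \mathcal{M}_2$.
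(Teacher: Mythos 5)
Your proposal is correct and follows essentially the same route as the paper: for \textit{(1)}$\Rightarrow$\textit{(2)} you divide $f_1-f_2$ inside $\mathcal{M}_1\vee\mathcal{M}_2$ via Proposition 4.3 and approximate the quotient by a net in $\mathcal{M}_1+\mathcal{M}_2$, and for \textit{(2)}$\Rightarrow$\textit{(1)} you verify condition \textit{(2)} of Proposition 4.4 with $L=\mathcal{M}_1+\mathcal{M}_2$, using the index-one property of each $\mathcal{M}_j$ and $w^{\ast}$-continuity of $R_{\lambda}$. The only differences are cosmetic (your case split and rescaling at $c$ versus the paper's uniform use of the coefficient $f_1(\lambda)$), and your sign bookkeeping in the first implication is in fact slightly cleaner than the paper's.
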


%
%
%
%

\begin{proof}
Suppose that $\text{ind}(\mathcal{M}_1 \vee \mathcal{M}_2)=1$. Since $\lambda \notin \mathcal{Z}(\mathcal{M}_1) \cup\mathcal{Z}(\mathcal{M}_2)$ there are $f_1 \in \mathcal{M}_1 $, $f_2 \in \mathcal{M}_2 $ such that $f_1(\lambda) = f_2(\lambda) = 1$. We have that $f_1 - f_2 \in \mathcal{M}_1 \vee \mathcal{M}_2$ and $(f_1-f_2)(\lambda) = 0$ so $f_1 - f_2 = (z-\lambda)h$ for some $h \in X$. By Proposition 4.3, $h \in \mathcal{M}_1 \vee \mathcal{M}_2$ and since $\mathcal{M}_1 + \mathcal{M}_2$ is dense in $\mathcal{M}_1 \vee \mathcal{M}_2$ there exists a net $\{ h_i\}_{i \in I} \sbs \mathcal{M}_1 + \mathcal{M}_2$ with $h_i \stackrel{w^{\ast}}{\ra}h$. By the definition of $\mathcal{M}_1 + \mathcal{M}_2$ we can find nets $\{h_i^1 \}_{i \in I} \sbs \mathcal{M}_1 $,$\{h_i^2 \}_{i \in I} \sbs \mathcal{M}_2$ such that $h_i = h_i^1 - h_i^2 \stackrel{w^{\ast}}{\ra}h$. Then $(z-\lambda)h_i^1 - (z-\lambda)h_i^2\stackrel{w^{\ast}}{\ra} (z-\lambda)h = f_1 - f_2$ by continuity of the shift operator. Define the nets:

\begin{gather*}
g_i^1 = (z-\lambda)h_i^1 + f_1 \in \mathcal{M}_1 \\
g_i^2 = (z-\lambda)h_i^2 + f_2 \in \mathcal{M}_2 \,,\, \text{with} \, \\
g_i^1(\lambda) = 0 + f_1(\lambda) = 1 \,\, \text{and} \,\, g_i^2(\lambda) = 0 + f_2(\lambda) = 1
\end{gather*}

and notice that

\begin{gather*}
g_i^1 - g_i^2 = (z-\lambda)h_i^1 + f_1 -(z-\lambda)h_i^2 -f_2 = (z-\lambda)(h_i^1-h_i^2) +f_1 -f_2 \stackrel{w^{\ast}}{\ra} (z-\lambda)h - (z-\lambda)h = 0,
\end{gather*}

which gives (2). To prove the contrary we will verify condition (2) of proposition 4.4. To that end, take $L = \mathcal{M}_1 + \mathcal{M}_2 $ and consider a function $h \in X$ with $(z-\lambda)h \in \mathcal{M}_1 + \mathcal{M}_2 $. We will show that $h \in \mathcal{M}_1 \vee \mathcal{M}_2$. We write $(z-\lambda)h \in \mathcal{M}_1 + \mathcal{M}_2$ as $(z-\lambda)h = f_1 + f_2 $ with $f_i \in \mathcal{M}_i$. By the hypothesis there are nets 

\begin{gather*}
\{ g_i^1 \}_{i\in I} \sbs \mathcal{M}_1 , \{ g_i^2 \}_{i\in I} \sbs \mathcal{M}_2 \, ,\, \text{with} \\
g_i^1(\lambda) = g_i^2(\lambda) = 1 \,\, \text{and} \,\, g_i^1 - g_i^2 \stackrel{w^{\ast}}{\ra} 0.
\end{gather*}

We write:

\begin{gather*}
f_1(z) = f_1(z) - f_1(\lambda)g_i^1(z) +  f_1(\lambda)g_i^1(z), \\
f_2(z) = f_2(z) - f_2(\lambda)g_i^2(z) +  f_2(\lambda)g_i^2(z).
\end{gather*}

and notice that:

\begin{gather*}
f_1 - f_1(\lambda)g_i^1 \in \mathcal{M}_1 \,\, \text{and}\,\, f_1(\lambda) - f_1(\lambda)g_i^1(\lambda) = 0 \,\, \fa i \in I , \\
f_2 - f_2(\lambda)g_i^2 \in \mathcal{M}_2 \,\, \text{and}\,\, f_2(\lambda) - f_2(\lambda)g_i^2(\lambda) = 0 \,\, \fa i \in I.
\end{gather*}

Then there are nets $\{ h_i^1 \}_{i\in I},\{ h_i^2 \}_{i\in I} \sbs X$ such that: 

\begin{gather*}
f_1 - f_1(\lambda)g_i^1 = (z-\lambda)h_i^1 \,\, \text{and} \,\, f_2 - f_2(\lambda)g_2^1 = (z-\lambda)h_i^2.
\end{gather*}

Moreover, by the fact that each of the subspaces is of index one and by Proposition 4.3 we can conclude that in fact $h_i^1 \in \mathcal{M}_1$ and $h_i^2 \in \mathcal{M}_2$ for all $i \in I$. Notice as well that $f_1(\lambda)+f_2(\lambda) = 0 \Ra f_1(\lambda) = -f_2(\lambda)$. This permits us to write:

\begin{gather*}
f_1 + f_2 = (z-\lambda)h_i^1 + f_1(\lambda)g_i^1 + (z-\lambda)h_i^2 + f_2(\lambda)g_i^2 = (z-\lambda)(h_i^1 + h_i^2) + f_1(\lambda)(g_i^1-g_i^2).
\end{gather*}

Since $g_i^1 - g_i^2 \stackrel{w^{\ast}}{\ra} 0$ we have that $(z-\lambda)(h_i^1+h_i^2) \stackrel{w^{\ast}}{\ra} f_1+f_2 = (z-\lambda)h $. Once again by continuity of the division operator we may conclude that $(h_i^1+h_i^2) \stackrel{w^{\ast}}{\ra} h$, which gives that $h \in \mathcal{M}_1 \vee \mathcal{M}_2 $.

\end{proof}

%
%
%
%

\begin{proposition}
Let $\mathcal{M} \in Lat_{X_0}(M_z)$ be an invariant subspace of index 1. Then $\overline{\mathcal{M}}^{w^{\ast}} \in Lat_X(M_z, w^{\ast})$ has index 1.
\end{proposition}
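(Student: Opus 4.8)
The plan is to deduce this from Proposition 4.4, applied to $\overline{\mathcal{M}}^{w^{\ast}}$ with the norm-dense subspace taken to be $\mathcal{M}$ itself. First I would record the structural facts: $\overline{\mathcal{M}}^{w^{\ast}}$ is a $w^{\ast}$-closed linear subspace of $X$ by construction, and it is $M_z$-invariant because $M_z$ is $w^{\ast}$--$w^{\ast}$ continuous (our standing assumption, cf. Proposition 4.1): if $f_i \stackrel{w^{\ast}}{\ra} \phi$ with $f_i \in \mathcal{M}$, then $zf_i \stackrel{w^{\ast}}{\ra} z\phi$ and $zf_i \in z\mathcal{M} \sbs \mathcal{M}$, so $z\overline{\mathcal{M}}^{w^{\ast}} \sbs \overline{\mathcal{M}}^{w^{\ast}}$. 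Hence $\overline{\mathcal{M}}^{w^{\ast}} \in Lat_X(M_z, w^{\ast})$ and $\text{ind}(\overline{\mathcal{M}}^{w^{\ast}})$ is well defined.

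Next I would fix the distinguished point $\lambda$. Since $\text{ind}(\mathcal{M}) = 1$ we have $\mathcal{M} \neq \{0\}$, so there is $f \in \mathcal{M}$ with $f \not\equiv 0$; as $f$ is analytic we may choose $\lambda \in \D$ with $f(\lambda) \neq 0$. Then $\lambda \notin \mathcal{Z}(\mathcal{M})$, and since $\mathcal{M} \sbs \overline{\mathcal{M}}^{w^{\ast}}$ also $\lambda \notin \mathcal{Z}(\overline{\mathcal{M}}^{w^{\ast}})$, so Proposition 4.4 is applicable to $\overline{\mathcal{M}}^{w^{\ast}}$ at this $\lambda$.

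It then remains to verify condition (2) of Proposition 4.4 with $L = \mathcal{M}$: clearly $\overline{L}^{w^{\ast}} = \overline{\mathcal{M}}^{w^{\ast}}$ and $\lambda \notin \mathcal{Z}(L) = \mathcal{Z}(\mathcal{M})$, so the crux is the implication that $(z-\lambda)h \in \mathcal{M}$ for some $h \in X$ forces $h \in \overline{\mathcal{M}}^{w^{\ast}}$. Here I would argue as follows: write $(z-\lambda)h = f_0 \in \mathcal{M} \sbs X_0$; since $X_0$ has the division property there is $g \in X_0$ with $(z-\lambda)g = f_0$, and since division of analytic functions on $\D$ is unique, $g = h$, so $h \in X_0$. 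Now $\mathcal{M}$ is a norm-closed invariant subspace of $X_0$ of index $1$, so Richter's characterization of the codimension-one property (the norm-topology analogue of Proposition 4.3, see \cite{Richter1987}) gives $h \in \mathcal{M} \sbs \overline{\mathcal{M}}^{w^{\ast}}$, as required. Proposition 4.4 then yields $\text{ind}(\overline{\mathcal{M}}^{w^{\ast}}) = 1$.

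The only delicate point, and the reason for routing the proof through $L = \mathcal{M}$ rather than checking condition (3) of Proposition 4.3 directly for $\overline{\mathcal{M}}^{w^{\ast}}$, is the step $h \in X \Rightarrow h \in X_0$: it works precisely because the hypothesis places $(z-\lambda)h$ in the original norm-closed subspace $\mathcal{M} \sbs X_0$, not merely in its $w^{\ast}$-closure, so that the division property of $X_0$ together with the index-one property of $\mathcal{M}$ in $X_0$ can be invoked.
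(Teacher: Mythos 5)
Your proof is correct and follows essentially the same route as the paper: take $L=\mathcal{M}$, verify condition (2) of Proposition 4.4 using the division property of $X_0$ and the norm-topology analogue of Proposition 4.3 for the index-one subspace $\mathcal{M}$. Your explicit justification of the step $h\in X\Rightarrow h\in X_0$ (via uniqueness of the analytic quotient) is a detail the paper leaves implicit, and it is a worthwhile addition.
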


%
%
%
%

\begin{proof}
Let $\lambda \notin \mathcal{Z}(\mathcal{M})$. Then $\lambda \notin \mathcal{Z}(\overline{\mathcal{M}}^{w^{\ast}})$. We set $L = \mathcal{M}$ and then $\overline{L}^{w^{\ast}} = \overline{\mathcal{M}}^{w^{\ast}}$. Let $h \in X$ with $(z - \lambda)h \in L = \mathcal{M} \sbs X_0$. Since $\mathcal{M}$ has index 1, we deduce from the analog of Proposition 4.3 for the norm topology case that $h \in \mathcal{M} $. By Proposition 4.4, $\text{ind}(\overline{\mathcal{M}}^{w^{\ast}}) = 1$.
\end{proof}

%
%
%
%

\begin{proposition}
Let $\mathcal{M} \in Lat_{X_0}(M_z)$ be an invariant subspace with $\text{ind}(\mathcal{M}) = M $, where $M$ is finite or countably infinite. Then $\overline{\mathcal{M}}^{w^{\ast}} \in Lat_X(M_z, w^{\ast})$ satisfies $\text{ind}(\overline{\mathcal{M}}^{w^{\ast}}) \leq M$. 
\end{proposition}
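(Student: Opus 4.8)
\textit{Proof proposal.} The plan is to compare the quotient $\overline{\mathcal{M}}^{w^{\ast}}/z\,\overline{\mathcal{M}}^{w^{\ast}}$ with $\mathcal{M}/z\mathcal{M}$ via the linear map induced by the inclusion $\mathcal{M}\hookrightarrow\overline{\mathcal{M}}^{w^{\ast}}$, and to exploit that a finite-dimensional subspace of a Hausdorff locally convex space is automatically closed.

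Write $\mathcal{N}=\overline{\mathcal{M}}^{w^{\ast}}$. By Proposition 4.1 (and the remark following it) $\mathcal{N}\in Lat_X(M_z,w^{\ast})$ and $z\mathcal{N}$ is $w^{\ast}$-closed, so $\mathcal{N}/z\mathcal{N}$, with the quotient topology, is a Hausdorff locally convex space and the quotient map $q\colon\mathcal{N}\to\mathcal{N}/z\mathcal{N}$ is continuous, open and onto. First I would settle the case $M<\infty$. Pick $e_1,\dots,e_M\in\mathcal{M}$ whose classes are a basis of $\mathcal{M}/z\mathcal{M}$, so that $\mathcal{M}=z\mathcal{M}+V$ with $V=\text{span}\{e_1,\dots,e_M\}$. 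Since $z\mathcal{M}\subseteq z\mathcal{N}=\ker q$ we get $q(\mathcal{M})=q(V)$, a subspace of $\mathcal{N}/z\mathcal{N}$ of dimension at most $M$. As $q$ is continuous and onto, $\mathcal{N}/z\mathcal{N}=q(\mathcal{N})=q\bigl(\overline{\mathcal{M}}^{w^{\ast}}\bigr)\subseteq\overline{q(\mathcal{M})}$, so $q(V)$ is dense in $\mathcal{N}/z\mathcal{N}$; being finite-dimensional it is closed, hence $q(V)=\mathcal{N}/z\mathcal{N}$ and $\text{ind}(\mathcal{N})=\dim(\mathcal{N}/z\mathcal{N})\le M$. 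Equivalently, $z\mathcal{N}+V$ is $w^{\ast}$-closed by the projection argument already used in the proof of Proposition 4.2, it contains $\mathcal{M}=z\mathcal{M}+V$, hence contains $\mathcal{N}$, hence equals $\mathcal{N}$.

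The case $M=\infty$ is where I expect the difficulty. The argument above then only shows that $\mathcal{N}/z\mathcal{N}$ has a countably-generated dense subspace, which does not bound its dimension. One route is to approximate $\mathcal{M}$ from inside: with $V=\text{span}\{e_k:k\ge1\}$ as before and $\mathcal{M}_n$ the norm-closed invariant subspace generated by $e_1,\dots,e_n$, one has $\text{ind}(\mathcal{M}_n)\le n$, hence $\text{ind}\bigl(\overline{\mathcal{M}_n}^{w^{\ast}}\bigr)\le n$ by the finite case; the obstacle is that $\bigcup_n\mathcal{M}_n$ need not be dense in $\mathcal{M}$, so to conclude one must run the comparison map $q$ directly and verify that every class in $\mathcal{N}/z\mathcal{N}$ already lies in $\text{span}\{q(e_k):k\ge1\}$. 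A shorter route, sufficient for the intended application to the Bloch space, is to invoke that $(X,w^{\ast})$ is separable there (Section 4.1), so every $w^{\ast}$-closed invariant subspace has at most countable index and $\text{ind}(\mathcal{N})\le\aleph_0=M$ is automatic.

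The only ingredients entering the finite case are the $w^{\ast}$-closedness of $z\mathcal{N}$ (that is, the $w^{\ast}$-continuity of $M_z$ and $R_{\lambda}$ from Proposition 4.1) and the elementary fact about finite-dimensional subspaces; no new analytic estimate is needed. Together with Proposition 4.8 (the index-one case) this proposition upgrades a norm-closed subspace of index $M$ to a $w^{\ast}$-closed subspace of index \emph{at most} $M$; the matching lower bound required for Theorem 1.6 is established separately, from the concrete structure of the subspaces at hand.
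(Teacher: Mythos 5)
Your finite-index argument is correct, and it takes a genuinely different route from the paper's. The paper's proof is three lines: decompose $\mathcal{M}=\mathcal{M}_1\vee\cdots\vee\mathcal{M}_M$ into index-one norm-closed invariant subspaces (the norm-topology analogue of the splitting in Proposition 4.2(2), i.e.\ Richter's result), observe that $\overline{\mathcal{M}}^{w^{\ast}}=\overline{\mathcal{M}_1}^{w^{\ast}}\vee\cdots\vee\overline{\mathcal{M}_M}^{w^{\ast}}$, apply Proposition 4.7 to make each closure index one, and conclude by subadditivity (Proposition 4.2(1)). You instead push a finite set of representatives $e_1,\dots,e_M$ directly through the quotient map $q\colon\mathcal{N}\to\mathcal{N}/z\mathcal{N}$ and use that a finite-dimensional subspace of the Hausdorff quotient is closed, so the dense subspace $q(V)$ is everything; as you note, this is exactly the projection mechanism inside the paper's proof of Proposition 4.2(1), applied once. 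Your version is more economical: it needs only the $w^{\ast}$-closedness of $z\mathcal{N}$ and the density of $\mathcal{M}$ in $\mathcal{N}$, and it bypasses both the decomposition into index-one pieces and Proposition 4.7. What the paper's route buys is a uniform treatment of $M=\aleph_0$ via (implicitly countable versions of) the decomposition and subadditivity, whereas your first route for $M=\infty$ stalls exactly where you say it does, and your second route imports the separability of $(X,w^{\ast})$, which is not among the standing hypotheses of Section 4.2 but is verified for the Bloch space and suffices for Theorems 1.6 and 1.7; note also that the paper's own Proposition 4.2 is only stated for finite index, so its infinite case is no more complete than yours. Nothing essential is missing from your argument in the finite case, which is the one doing real work.
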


%
%
%
%

\begin{proof}
By (1) of Proposition 4.2, we may write $\mathcal{M} = \mathcal{M}_1 \vee \mathcal{M}_2 \vee \cdots \vee \mathcal{M}_M$, where each $\mathcal{M}_n$ has index 1. Then $\overline{\mathcal{M}}^{w^{\ast}} = \overline{\mathcal{M}_1}^{w^{\ast}} \vee \overline{\mathcal{M}_2}^{w^{\ast}} \vee \cdots \vee \overline{\mathcal{M}_M}^{w^{\ast}}$. By combining Propositions 4.2 and 4.7 we obtain the result.
\end{proof}

%
%
%
%

\begin{lemma}
    If $\mathcal{M} \sbs X_0$, is a norm closed and convex set, then $\overline{\mathcal{M}}^{w^{\ast}} \cap X_0 = \mathcal{M}$.
\end{lemma}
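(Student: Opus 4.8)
The plan is to reduce the assertion to Mazur's theorem, that a norm-closed convex subset of a Banach space is weakly closed. The one structural input needed is that the $w^{\ast}$-topology of $X$ induces on the subspace $X_0$ precisely the weak topology of $X_0$.

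First I would record the identifications. Since $X_0^{\ast} \cong Y$ and $Y^{\ast} \cong X$, the space $X$ is (canonically) the bidual $X_0^{\ast\ast}$, and the continuous embedding $X_0 \hookrightarrow X$ is the canonical embedding of $X_0$ into its bidual; equivalently, the two duality pairings are compatible, so that for $f \in X_0$ and $g \in Y \cong X_0^{\ast}$ the pairing of $g$ against the image of $f$ in $X$ equals $\langle f , g \rangle$. Consequently the $w^{\ast}$-topology $\sigma(X, Y)$, restricted to $X_0$, is $\sigma(X_0, Y) = \sigma(X_0, X_0^{\ast})$, i.e. the weak topology of the Banach space $X_0$. (This is the standard fact underlying Goldstine's theorem.)

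Next I would invoke the elementary topological identity that, for $\mathcal{M} \subseteq X_0 \subseteq X$, the closure of $\mathcal{M}$ in the subspace topology that $X_0$ inherits from $(X, w^{\ast})$ equals $\overline{\mathcal{M}}^{w^{\ast}} \cap X_0$. By the previous paragraph this subspace topology is the weak topology of $X_0$, so $\overline{\mathcal{M}}^{w^{\ast}} \cap X_0$ is exactly the weak closure of $\mathcal{M}$ inside $X_0$.

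Finally, since $\mathcal{M}$ is convex and norm-closed in the Banach space $X_0$, Hahn--Banach separation (Mazur's theorem) shows $\mathcal{M}$ is weakly closed, hence equal to its own weak closure. Combining the last two steps yields $\overline{\mathcal{M}}^{w^{\ast}} \cap X_0 = \mathcal{M}$. I anticipate no real difficulty: the only step deserving care is the verification that the embedding $X_0 \hookrightarrow X$ is compatible with the pairings, so that the restricted $w^{\ast}$-topology is genuinely the weak topology of $X_0$ --- but this is part of the standing set-up (and is immediate for $\BB_0 \subseteq \BB$ with pre-dual $A^1$).
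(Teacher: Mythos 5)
Your proof is correct and takes essentially the same route as the paper: both reduce to the fact that the $w^{\ast}$-topology $\sigma(X,Y)$ restricted to $X_0$ is the weak topology $\sigma(X_0, X_0^{\ast})$, and then invoke Mazur's theorem that a norm-closed convex set is weakly closed. The paper phrases the reduction via nets ($h_i \stackrel{w^{\ast}}{\ra} h$ in $X$ is the same as $h_i \ra h$ weakly in $X_0$) rather than via the subspace-topology closure identity, but the content is identical.
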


\begin{proof}
    One inclusion is obvious. For the other inclusion consider $h \in \overline{\mathcal{M}}^{w^{\ast}} \cap X_0  $. There exists a net $\{h_i \}_{i\in I} \sbs \mathcal{M} \sbs X_0$ such that $h_i \stackrel{w^{\ast}}{\ra} h$ in $X$. Notice that both the net and its limit belong to the space $X_0$. By the dualities $X_0 \cong Y$ and $Y \cong X$ we know that $h_i \stackrel{w^{\ast}}{\ra} h$ in $X$ is the same as $h_i \ra h$ weakly in $X_0$. Hence $h$ belongs to the weak closure of $\mathcal{M}$ in $X_0$. Since $\mathcal{M}$ is convex, its weak closure coincides with its norm closure, and as such we deduce that $h \in \mathcal{M}$, as $\mathcal{M}$ is itself norm closed.
\end{proof}

We may now prove one of the main theorems of this section.

%
%
%
%

\vspace{2mm}

\noindent \textit{Proof of Theorem 1.6.}

\vspace{2mm}

We assume first that $M< \infty$ and suppose that $\text{ind}(\overline{\mathcal{M}}^{w^{\ast}}) < M$. Without loss of generality we may assume that $0 \notin \mathcal{Z}(\mathcal{M})$.  Let $\{f_1,f_2,\ldots,f_M \}$ be a set of functions whose equivalence classes in $\mathcal{M}/z\mathcal{M}$ form a base. By our assumption, the set $\{ \overline{f}_1, \overline{f}_2, \ldots, \overline{f}_M \} \sbs \overline{\mathcal{M}}^{w^{\ast}}/ z\overline{\mathcal{M}}^{w^{\ast}}$ has to be linearly dependent, and therefore there exist $\lambda_1,\lambda_2, \ldots, \lambda_M$ not all zero such that:

\begin{equation*}
    \sum_{m=1}^M\lambda_m \overline{f}_m = 0 \, , \,\, \text{in} \,\, \overline{\mathcal{M}}^{w^{\ast}}/ z\overline{\mathcal{M}}^{w^{\ast}}.
\end{equation*}

That means that there exists a function $h \in \overline{\mathcal{M}}^{w^{\ast}}$ such that:

\begin{equation}
    \sum_{m=1}^M\lambda_m f_m =  zh \, , \,\, \text{in} \, \, X.
\end{equation}

It suffices to prove that $h \in \mathcal{M}$, because that provides a contradiction to the fact that the equivalence classes of the functions $f_m  , 1 \leq m \leq M$ form a base in  $\mathcal{M}/z \mathcal{M}$. Notice that (27) actually says that $zh \in X_0$. From the division property on $X_0$ we deduce that $h \in X_0$. Since $h \in \overline{\mathcal{M}}^{w^{\ast}}$ we get that $h \in \overline{\mathcal{M}}^{w^{\ast}} \cap X_0$. Moreover, $\mathcal{M}$ is convex, as it is a linear subspace of $X_0$. From Lemma 4.9, we obtain that $h \in \mathcal{M}$. 

In the above argument we essentially demonstrated that, given an $\mathcal{M} \in Lat_{X_0}(M_z)$ and a set of functions $f_1,f_2,\ldots,f_M$, linearly independent in the quotient space $\mathcal{M}/z\mathcal{M}$, the same set of functions forms a linearly independent set in the quotient space $\overline{\mathcal{M}}^{w^{\ast}}/ z\overline{\mathcal{M}}^{w^{\ast}}$. In the case where $M = \infty $ we consider an infinite set $\{ f_1, f_2, \ldots \} \sbs \mathcal{M}$ that forms a base in $\mathcal{M}/z\mathcal{M}$. Then for any $N \in \N$ the set $\{f_1, f_2, \ldots f_N \}$ will form a linearly independent set in $\overline{\mathcal{M}}^{w^{\ast}}/ z\overline{\mathcal{M}}^{w^{\ast}}$, proving that for every $N \in \N$, $\text{ind}(\overline{\mathcal{M}}^{w^{\ast}}) \geq N $, and hence $\text{ind}(\overline{\mathcal{M}}^{w^{\ast}}) = \infty$.

$ \quad\quad\quad\quad\quad\quad\quad\quad\quad\quad\quad\quad\quad\quad\quad\quad\quad\quad\quad\quad\quad\quad\quad\quad\quad\quad\quad\quad\quad\quad\quad\quad\quad\quad\quad\quad\quad\quad\quad\quad \quad \blacksquare$

%
%
%
%

Note that the above argument works exactly in the same way if $M=1$, reproving Proposition 4.7. Lastly, we can apply this to prove Theorem 1.7:

\vspace{2mm}

\noindent \textit{Proof of Theorem 1.7.}

\vspace{2mm}

Let $N \in \{2, \ldots, \infty \}$. By Theorem 1.5 there exist functions $f_n \sbs \BB_0$, $1 \leq n < N$ such that $E_N := \vee_{n=1}^{N}[f_n]$ has index $N$ in $\BB_0$. An application of Theorem 1.6 for for $X_0 = \BB_0$, $Y= A^1$ and $X = \BB$ yields the result.

$ \quad\quad\quad\quad\quad\quad\quad\quad\quad\quad\quad\quad\quad\quad\quad\quad\quad\quad\quad\quad\quad\quad\quad\quad\quad\quad\quad\quad\quad\quad\quad\quad\quad\quad\quad\quad\quad\quad\quad\quad \quad \blacksquare $

%
%
%
%

\begin{scriptsize}
\noindent \bf{ACKNOWLEDGEMENTS.} 
\end{scriptsize}
I would like to thank my advisors Evgeny Abakumov and Alexander Borichev for proposing me the problems and for proof-checking my drafts. I would especially like to thank A. Borichev for sharing with me his rich ideas and valuable techniques.

%
%

\printbibliography[heading=bibintoc,title={References}]

%
%

\vspace{8mm}

\noindent NIKIFOROS BIEHLER: 

Univ Gustave Eiffel, Univ Paris Est Creteil, CNRS, LAMA UMR8050, F-77447 Marne-la-Vallée, France
 
 Email address : \bf{nikiforos.biehler@univ-eiffel.fr}

\end{document}